\patchcmd{\@settitle}{\uppercasenonmath\@title}{}{}{}
\patchcmd{\@setauthors}{\MakeUppercase}{\scshape}{}{}
\patchcmd{\section}{\scshape}{\bfseries}{}{}
\renewcommand{\@secnumfont}{\bfseries}
\patchcmd{\abstract}{\scshape\abstractname}{\textbf{\abstractname}}{}{}
\newtheorem{theorem}{Theorem}[section]
\newtheorem{proposition}{Proposition}[section]
\newtheorem{lemma}{Lemma}[section]
\newtheorem{cor}{Corollary}[section]
\theoremstyle{definition}
\newtheorem{definition}{Definition}[section]
\newtheorem{ex}{Example}[section]
\theoremstyle{remark}
\newtheorem{remark}{Remark}[section]
\newcommand{\RRe}{\mathrm{Re}}
\newcommand{\Log}{\mathrm{Log}}
\newcommand{\Res}{\mathrm{Res}\,}
\newcommand{\sign}{\mathrm{sign}}
\newcommand{\supp}{\mathrm{supp}}
\newcommand{\im}{\mathrm{im}}
\begin{document}
\title[Geometry of generalized amoebas]{Geometry of generalized amoebas}
\author{Yury~V. Eliyashev}
\address{Institute of Mathematics and Computer Science, Siberian Federal University, 79 
Svobodny pr., 660041 Krasnoyarsk, Russia}\address{Moscow Institute of Electronics and Mathematics, National Research University Higher School of Economics, 34 Tallinskaya, 123592 Moscow, Russia} \email{eliyashev@gmail.com}
\thanks{The research for this work was carried out in Siberian Federal University and was supported by grant of the Ministry of Education and Science of the Russian Federation № 1.2604.2017/PCh.
 The author is a Young Russian Mathematics award winner and would like to thank
its sponsors and jury.}
\begin{abstract}
Recently Krichever proposed a generalization of the amoeba and the
Ronkin function of a plane algebraic curve. In our paper higher-dimensional version of this generalization is studied.
We translate to the generalized case different geometric results known in the standard amoebas case. 
\end{abstract}
\maketitle

\section*{Introduction.}

The \emph{amoeba} $\mathcal{A}_X$ of a closed complex analytic set $X$ in $(\mathbb{C}^*)^m$ is, by definition, the image in $\mathbb{R}^m$
of this set under the map $$\Log:(z_1,\dots,z_m)\rightarrow (\log|z_1|,\dots,\log|z_m|).$$
The amoeba $\mathcal{A}_F$ of a Laurent polynomial $F(z)$ in $(\mathbb{C}^*)^m$ is
the image of the hypersurface $\{F(z)=0\}$ 
under the map $\Log.$ In this paper we will usually call these objects classical amoebas. 
The notion of amoeba was introduced by Gelfand, Kapranov and Zelevinsky in \cite{GKZ}. 
This object was widely studied in the last decades and found different applications in complex geometry and tropical geometry.

Recently Krichever introduced the notion of generalized amoeba of a complex curve \cite{Kr}. To define this object one need to 
choose on a given smooth complex curve $C$ a pair of meromorphic differentials $\omega_1,\omega_2$ with some additional properties. 

In our paper we define the notion of a generalized amoeba in higher dimensions.
The main purpose of this paper is to extend results known in the classical case to the multidimensional generalized amoeba case.
Some of our results are proved similarly to the classical ones. 
 On the other hand, some proofs of theorems in the classical amoeba case are based on direct manipulations with the Laurent polynomial $F(z)$ of an amoeba.
Since in the generalized case we have no analog of this polynomial, we have to use different techniques.

Let us recall some facts about the classical amoebas of a Laurent polynomial $F(z).$
We denote by $\Upsilon$ the set of connected components of $\mathbb{R}^m\setminus \mathcal{A}_F.$
Any component $C\in \Upsilon$ is a convex set in $\mathbb{R}^m$. 
Let us define the \emph{Ronkin function} $R_F:\mathbb{R}^m \rightarrow \mathbb{R}$ of the amoeba $\mathcal{A}_F$ as follows
$$R_F(x)=\frac{1}{(2 \pi i)^m} \int_{\Log^{-1}(x)} \log|F(z)| \frac{d z_1}{z_1}\wedge \dots\wedge \frac{d z_m}{z_m}.$$
The Ronkin function $R_F$ is convex and its restriction to any connected components of $\mathbb{R}^m\setminus \mathcal{A}_F$ is an affine function.
Therefore one can define the map $\nu_F$ from the set $\Upsilon$ to $\mathbb{R}^m$ as $\nu_F(C)=\nabla R_f(x),$ where $C\in \Upsilon$ and $x\in C,$ this map is called the \emph{order map.}
In fact, $\nu_F$ is injective, integer valued and its image is contained in the Newton polytope $N_F$ of the polynomial $F.$
Moreover, the convex hull of the image of $\nu_F$ coincides with  $N_F$.
One of the main facts about geometry of $\mathcal{A}_F$ is that the recession cone of a component $C\in \Upsilon$ is equal 
to the normal cone to the Newton polytope $N_F$ at the point $\nu_F (C)$ \cite{FPT}.

Let us introduce the main objects of our study.
Suppose that $X$ is an $n-$dimensional compact complex manifold and $V$ is a simple normal crossing divisor on $X.$
Denote by $H^0(X,\Omega^1(\log V))$ the space  of holomorphic $1$-forms on $X\setminus V$ with
logarithmic poles along $V.$ A form $\psi\in H^0(X,\Omega^1(\log V))$ is called  an \emph{imaginary normalized holomorphic differential} if
$d \psi=0$ and for any cycle $\gamma\in H_1(X\setminus V, \mathbb{R})$ holds
$$\int_\gamma \psi \in i \mathbb{R}.$$
Let $\omega=(\omega_1,\dots,\omega_m)$ be a vector of imaginary normalized holomorphic differentials, then 
we define the following map from $X\setminus V$ to $\mathbb{R}^m$ 
$$\Log_{\omega,p_0}(p)=(\RRe \int^p_{p_0} \omega_1,\dots, \RRe \int^p_{p_0} \omega_m),$$
where $p,p_0\in X\setminus V.$ The choice of the point $p_0$ does not play much role, so we will usually write just $\Log_{\omega}$ instead of 
$\Log_{\omega,p_0}.$

The \emph{generalized amoeba} $\mathcal{A}_\omega$ associated with $\omega=(\omega_1,\dots,\omega_m)$ and the pair $X,V$ is the image of  the map $\Log_{\omega}$ in $\mathbb{R}^m.$
We will show that generalized amoebas have properties similar to
classical amoebas.

This paper is organized as follows. In the first section we give the definition of a generalized amoeba.
We describe the set of critical values of the map $\Log_{\omega,p_0}$ and the asymptotic behavior of an generalized amoeba.
Also we give a complete description of the space of imaginary normalized holomorphic differentials 
when  $X$ is a K\"{a}hler manifold with a fixed simple normal crossing divisor $V$ on it.

In the second section we study generalized amoebas when $m=n+1$ (here $n=\dim X$ and $m=\dim \mathbb{R}^m$).
This case is a generalized analog of an amoeba of a hypersurface in $(\mathbb{C}^*)^m.$ 
Geometry of classical amoebas of hypersurfaces in  $(\mathbb{C}^*)^m$ is the most studied and well-described topic in this area.

Consider the differential forms
$$\Omega_j=\omega_1\wedge\dots\wedge\omega_{j-1}\wedge\omega_{j+1}\wedge\dots\wedge\omega_{m}, \; j\in\{1,\dots,m\}.$$
We say that the generalized amoeba $\mathcal{A}_\omega$ satisfies the \emph{nondegeneracy condition}
if there exits $j$ such that $\Omega_j\not\equiv 0$ on $X.$ 
When we deal with the $m=n+1$ case we assume that the generalized amoeba $\mathcal{A}_\omega$ satisfies the nondegeneracy condition.

Our main tool to study the $m=n+1$ case is the Ronkin function of a generalized amoeba.
Because we don't have the polynomial $F$ we can't use the classical definition.
We denote the \emph{Ronkin function} of the generalized amoeba $\mathcal{A}_\omega$ by $R_\omega.$
Let $\phi(x)$ be a smooth function with a compact support and let $d x = d x_1 \wedge \dots \wedge d x_m.$ 
 Then we say that a function $R_\omega$ is the Ronkin function of the generalized amoeba $\mathcal{A}_\omega$
if 
$$\frac{\partial^2}{\partial x_k \partial x_j}R_\omega [\phi(x) d x]=
\frac {(-1)^{j+k+\frac{m(m-1)}{2}}}{2^n} \RRe \int_{X\setminus V} \frac{1}{(2 \pi i)^n} 
\Log^*_\omega (\phi) \Omega_{j}\wedge\overline{\Omega}_{k},$$
for any smooth function $\phi(x)$ with a compact support. 
Here $\frac{\partial^2}{\partial x_k \partial x_j}R_\omega [\phi(x) d x]$ denotes the value of the current 
$\frac{\partial^2}{\partial x_k \partial x_j}R_\omega$ acting on the test form $\phi(x) d x.$
In other words, it equals
$$\frac{\partial^2}{\partial x_k \partial x_j}R_\omega [\phi(x) d x] = \int_{\mathbb{R}^m} 
(\frac{\partial^2}{\partial x_k \partial x_j}R_\omega) \phi(x) d x : = \int_{\mathbb{R}^m} R_\omega
(\frac{\partial^2}{\partial x_k \partial x_j}\phi(x)) d x.$$
We prove that this function exists, it is convex, and the restriction of $R_\omega$ to any connected component of
$\mathbb{R}^m\setminus \mathcal{A}_\omega$ is an affine function. Because the Ronkin function function 
is not necessarily smooth we calculate its derivatives in the sense of generalized functions.

The notions of tropical supercurrents and superform was introduced in the tropical geometry setting \cite{Lag}.
Tropical superforms is a tropical analog of differential forms on complex manifolds, in particular they were used to construct a tropical analog
of de Rham cohomology theory \cite{JSS}.
The language of superforms is useful in our study. In particular, we interpret the Hessian of the Ronkin function as a positive $(1,1)-$supercurrent
$$\sum^m_{j,k=1} \frac{\partial^2}{\partial x_k \partial x_j}R_\omega d x_j \otimes d x_k.$$
Then the existence and properties of the Ronkin function is implied 
by the tropical analog of the $\partial\overline{\partial}-$lemma.

Now, let us denote by $\Upsilon$ the set of connected component of $\mathbb{R}^m\setminus \mathcal{A}_\omega$.
We prove that each connected component $C\in\Upsilon$ is convex.
In the generalized case we define the order map $\nu_\omega: \Upsilon \rightarrow \mathbb{R}^m$
in the same way as it was defined in the classical case, i.e.,
$$\nu_\omega(C)= \nabla R_\omega(x),$$ where $x$ is any point from $C.$
 Since we don't have the polynomial $F$ we can't use the classical definition of the Newton polytope, we define the \emph{Newton polytope} 
 $N_\omega$ of the amoeba $\mathcal{A}_\omega$ to be equal to the convex hull of the image 
of the order map $\nu_\omega$.
We prove that $\nu_\omega$ is injective and the recession cone of a component $C\in \Upsilon$ is equal 
to the normal cone to the Newton polytope $N_\omega$ at the point $\nu_\omega (C).$  These are generalizations of the classical case 
statements.
Also we prove several statements about the Monge-Amp\`{e}re measure associated with $R_\omega,$ these statements are generalization 
of results from \cite{PR}.

In the last section we give a coordinate-free construction of the generalized amoeba and
reformulate results of the first two section in a  coordinate-free form, it gives more abstract view on this subject.

Also we would like to mention another paper on generalized amoebas.
The tropical limit of generalized amoebas of complex curves was studied in \cite{Lan}, this paper describes
interesting relations
between generalized amoebas, tropical curves and geometry of the moduli space of complex curves. 

\section{Definition of a generalized amoeba and its general properties.}
\subsection{Definition of  a generalized amoeba}
\begin{definition} Let $X$ be a closed analytical subset of the algebraic torus $(\mathbb{C^*})^m.$
Consider the map $$\Log:(\mathbb{C^*})^m \rightarrow \mathbb{R}^m,$$
$$\Log(z_1,\dots, z_m)=(\log|z_1|,\dots, \log|z_m|).$$
The \emph{amoeba} $\mathcal{A}_X \subset \mathbb{R}^m$ of $X$ is the image of $X$ under the map $\Log$, i.e., $$\mathcal{A}_X = \Log(X).$$
usually we will call it a classical amoeba. 
\end{definition}

Let $X$ be a smooth compact complex $n-$dimensional manifold.
Consider $V=\bigcup_j D_j$ a simple normal crossing divisor on $X$, i.e.,
all irreducible components $D_j$ are smooth and for any point $x\in V$ there are local 
coordinates $z_1,\dots,z_n$ on $X$ such that $V$ is locally defined by the equation 
$z_1\cdot {\dots} \cdot z_k=0$ in some neighborhood $U\subset X$ of $x.$
\begin{definition} Let $\Omega^1_{X}(\log V)$ be a
locally free $\mathcal{O}_{X}$-module, that is generated on an open neighborhood $U$
by the differentials $$\frac{d z_1}{z_1},\dots,\frac{d z_k}{z_k},d z_{k+1},\dots, d z_n,$$
where  $z_1\cdot {\dots} \cdot z_k=0$ is a local equation of $V$ in $U.$ In other words, any element
$\omega\in\Omega^1_{X}(\log V)(U)$ has the following form
$$\omega=\sum^k_{j=1} f_j(z) \frac{d z_j}{z_j} + \sum^n_{j=k+1} f_j(z) d z_j,$$
where $f_j(z)\in\mathcal{O}_{X}(U).$
This sheaf is called the \emph{sheaf of logarithmic $1$-forms along $V$ over $X.$} 
We denote by $\Omega^k_{X}(\log V)$ the $k$-th exterior power of the sheaf $\Omega^1_{X}(\log V),$
i.e., $$\Omega^k_{X}(\log V)=\Omega^1_{X}(\log V)\wedge\dots \wedge\Omega^1_{X}(\log V).$$ This sheaf is called the \emph{sheaf of logarithmic $k$-forms along $V$ over $X.$} 
\end{definition}

\begin{definition} Consider a $1$-form $\omega\in H^0(X,\Omega^1_{X}(\log V))$ such that $d \omega=0$ on $X\setminus V.$
The $1$-form $\omega$ is called \emph{imaginary normalized holomorphic differential} if
for any cycle $\gamma\in H_1(X\setminus V, \mathbb{R})$ holds
$$\int_\gamma \omega \in i \mathbb{R}.$$
\end{definition}

Given a vector of imaginary normalized holomorphic differentials $\omega=(\omega_1,\dots,\omega_m)$ and a point $p_0\in X\setminus V,$ we define the map
$$\Log_{\omega,p_0}: X\setminus V \rightarrow \mathbb{R}^m$$ as follows
$$\Log_{\omega,p_0}(p)=(\RRe \int^p_{p_0} \omega_1,\dots, \RRe \int^p_{p_0} \omega_m).$$

Because the form $\omega_j$ is an imaginary normalized holomorphic differential, the real part of $\int^p_{p_0} \omega_j$ does not depend on the 
choice of a path of integration, hence the map $\Log_{\omega,p_0}$ is well-defined. Obviously, for  $p_0,p'_0\in X\setminus V$ the 
corresponding maps $\Log_{\omega,p_0},$ $\Log_{\omega,p'_0}$ differ from each other by the shift by the vector
$$(\RRe \int^{p'_0}_{p_0} \omega_1,\dots, \RRe \int^{p'_0}_{p_0} \omega_m).$$
Therefore the choice of the point $p_0$ does not play much role, we consider the maps $\Log_{\omega,p_0}$ for the different choices 
of $p_0$ to be equivalent, and we will usually write just $\Log_{\omega}$ instead of $\Log_{\omega,p_0}.$

Denote by $\mathrm{Sing} \; \omega_j$ the union of all $D_k$ such that $\omega_j$ has a pole along $D_k.$
\begin{definition}Let $\omega=(\omega_1,\dots,\omega_m)$ be a vector of imaginary normalized holomorphic 
differentials such that $\bigcup_j\mathrm{Sing} \omega_j = V.$
The \emph{generalized amoeba} $\mathcal{A}_\omega\subset \mathbb{R}^m$ associated with $\omega$ and the pair $X,V$
is the image of the map $\Log_\omega:X\setminus V \rightarrow \mathbb{R}^m.$
\end{definition}

\begin{ex}
Let $X$ be a smooth algebraic variety in $(\mathbb{C}^*)^m,$ we may think that there is a smooth compactification $\overline{X}$ of $X$ such that 
$\overline{X}\setminus X$ is a simple normal crossing divisor.
Choose $\omega=(\omega_1,\dots,\omega_m)$ as follows $$\omega_j=\frac{d z_j}{z_j }|_X.$$
Then $\Log_\omega$ is equal to $\Log|_{X}$ up to the shift by a constant vector, therefore the generalized amoeba 
$\mathcal{A}_\omega$ coincides with the classical amoeba $\mathcal{A}_X$ after the translation by this vector.
\end{ex}

The generalized amoebas of one-dimensional complex manifolds was introduced in \cite{Kr}. Let us consider the main object of study of \cite{Kr}.
\begin{ex}
Let $C$ be a smooth algebraic curve of genus $g$ with $s$ distinct 
marked points $p_1,\dots,p_s.$ For any set of numbers $a_1,\dots,a_s\in \mathbb{R}$ such 
that $\sum^s_{j=1} a_j=0$ there is a unique imaginary normalized holomorphic differential 
$\omega$ such that it has poles of order $\leq 1$ at the points $p_j$ and
$\mathrm{res}_{p_j} \omega = a_j.$ Let $\omega_1$ and $\omega_2$ be such differentials then there is the logarithmic map
$$\Log_{(\omega_1,\omega_2)}: C\setminus (p_1\cup\dots\cup p_s) \rightarrow \mathbb{R}^2.$$ The image of this map
is called the \emph{generalized amoeba of a complex curve with marked points}.
\end{ex}

\begin{definition} Let $X$ be a closed analytical subset of the algebraic torus $(\mathbb{C^*})^m.$
Consider the map $$\mathrm{Arg}:(\mathbb{C^*})^m \rightarrow (S^1)^m=\mathbb{R}^m/ 2\pi\mathbb{Z}^m,$$
$$\mathrm{Arg}(z_1,\dots, z_m)=(\mathrm{Arg} (z_1),\dots, \mathrm{Arg} (z_m)),$$
where $\mathrm{Arg} (z) \in \mathbb{R}/ 2 \pi\mathbb{ Z}$ is the argument of a complex number $z.$
The \emph{coamoeba} $\mathcal{C}_X$ of $X$ is the image of $X$ under the map $\mathrm{Arg}$, i.e., $\mathcal{C}_X=\mathrm{Arg}(X).$
\end{definition}
\begin{remark}In some sense coamoebas play a complementary role to amoebas. One can define a generalized 
coamoeba and some kind of hybrids between a coamoeba and an amoeba in a very similar fashion.
Let $\Lambda_r$ be a lattice of rank $r$ in $\mathbb{R}^m.$
Let $\omega=(\omega_1,\dots,\omega_m)$ be a vector of closed holomorphic $1-$form on $X\setminus V$ 
such that for any cycle $\gamma\in H_1(X\setminus V, \mathbb{Z})$ holds
$$\int_\gamma \omega \in \Lambda_r + i \mathbb{R}^m \subset \mathbb{C}^m=\mathbb{R}^m + i \mathbb{R}^m .$$
Then for a fixed point $p_0\in X\setminus V$ one can defined
the map $$\Log_\omega: X\setminus V \rightarrow \mathbb{R}^m/ \Lambda_r \simeq (S^1)^{r}\times \mathbb{R}^{m-r},$$
$$\Log_\omega(p)=(\RRe \int^p_{p_0} \omega_1,\dots, \RRe \int^p_{p_0} \omega_m) \; \mathrm{mod} \; \Lambda_r.$$
For a smooth algebraic variety $X$ in $(\mathbb{C}^*)^m,$ the vector $\omega=(\omega_1,\dots,\omega_m),$
$\omega_j=\frac{1}{i} \frac{d z_j}{ z_j}|_{X},$ and the lattice $\Lambda_m= 2 \pi \mathbb{Z}^m$
we obtain a classical coamoeba.
In this article we don't study generalized coamoebas, but it seems that one can translate some properties of 
classical coamoebas and of generalized amoebas to the  generalized coamoebas case.
\end{remark}

 \begin{proposition} \label{pr.closed}
 The map $\Log_\omega$ is closed and proper. In particular, the generalized amoeba $\mathcal{A}_\omega$ is a closed set in $\mathbb{R}^m$.
\end{proposition}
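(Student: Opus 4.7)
The approach is to reduce both closedness and properness to the local behavior of $\Log_\omega$ near the divisor $V$. Since $X$ is compact, properness of $\Log_\omega:X\setminus V\to\mathbb{R}^m$ is equivalent to the escape-at-infinity property: $|\Log_\omega(p)|\to\infty$ in $\mathbb{R}^m$ whenever $p$ approaches $V$ in $X$. Given properness, both closedness of the map and closedness of $\mathcal{A}_\omega=\Log_\omega(X\setminus V)$ follow from the general point-set fact that a proper continuous map into a Hausdorff space is closed and has closed image.

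First I would work in a local chart around a point $p_\infty\in V$. Choose coordinates $z_1,\dots,z_n$ on a polydisc neighborhood $U\ni p_\infty$ in which $V\cap U=\{z_1\cdots z_k=0\}$ and $p_\infty$ corresponds to the origin. Each $\omega_j|_U$ is a logarithmic $1$-form with holomorphic coefficients. Applying the imaginary-normalization condition to small transverse loops around each irreducible component $D_l$ meeting $p_\infty$ forces the Poincar\'e residue $\Res_{D_l}\omega_j$ to be purely real valued. Since it is a holomorphic function on the compact connected smooth manifold $D_l$, it is constant, and by real-valuedness a real constant $c_{j,l}\in\mathbb{R}$.

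Using $d\omega_j=0$ and subtracting off the logarithmic poles, the $1$-form $\omega_j-\sum_{i=1}^{k} c_{j,i}\,dz_i/z_i$ has vanishing residue along every $D_l\cap U$, hence extends to a closed holomorphic $1$-form on $U$, which is exact by contractibility of $U$. This yields
$$\omega_j=\sum_{i=1}^{k} c_{j,i}\,\frac{dz_i}{z_i}+d\tilde G_j,\qquad \tilde G_j\in\mathcal{O}(U),$$
and hence
$$\RRe\int^{p}\omega_j=\sum_{i=1}^{k} c_{j,i}\log|z_i(p)|+O(1)\quad\text{as }p\to p_\infty.$$
The hypothesis $\bigcup_j\mathrm{Sing}\,\omega_j=V$ says precisely that for each $i\in\{1,\dots,k\}$ the column vector $(c_{1,i},\dots,c_{m,i})\in\mathbb{R}^m$ is nonzero. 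Since every $\log|z_i(p)|\to-\infty$ as $p\to p_\infty$, the vector $\Log_\omega(p)$ is forced to escape to infinity in $\mathbb{R}^m$ along any approach to $p_\infty$. By compactness of $V\subset X$, cover $V$ by finitely many such charts to obtain uniform escape.

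The main obstacle is this last implication: ruling out that the several positive-coefficient linear combinations of the nonzero residue columns could conspire, in a degenerate approach to a multi-component stratum, to keep $\Log_\omega(p)$ bounded. This is exactly where the structural information about residues of closed imaginary normalized forms, together with the hypothesis $\bigcup_j\mathrm{Sing}\,\omega_j=V$, must be used in full. Once uniform escape at $V$ is in hand, for any compact $K\subset\mathbb{R}^m$ the preimage $\Log_\omega^{-1}(K)$ is a closed subset of $X\setminus V$ whose closure in the compact space $X$ is disjoint from $V$, and is therefore compact. Properness of $\Log_\omega$ follows, and the closedness of $\Log_\omega$ and of $\mathcal{A}_\omega$ are immediate corollaries.
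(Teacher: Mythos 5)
Your overall strategy is the same as the paper's: everything is reduced to the escape property ($\|\Log_\omega(p)\|\to+\infty$ as $p$ tends to a point of $V$), after which properness and closedness follow from general topology. The paper packages this by extending $\Log_\omega$ to a continuous map $\widetilde{\Log}_\omega\colon X\to S^m=\mathbb{R}^m\cup\{\infty\}$ sending $V$ to $\infty$ and quoting the closed map lemma, while you verify properness directly from escape plus compactness of $X$; these are equivalent. Your local normal form $\omega_j=\sum_i c_{j,i}\,dz_i/z_i+d\tilde G_j$ with real constants $c_{j,i}$ is also correct and matches the local expansions used elsewhere in the paper.

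The step you yourself label ``the main obstacle'' is, however, a genuine gap --- and it is exactly the step the paper disposes of with the single phrase ``Observe that $\|\Log_\omega(p)\|\to+\infty$ as $p\to p'\in V$,'' so you have correctly located the crux without closing it. Knowing that each column $v_i=(c_{1,i},\dots,c_{m,i})$ is nonzero does not rule out cancellation at a multi-component stratum: if $0$ is a strictly positive combination of the residue vectors attached to the components through a common point $p_\infty$, then $\sum_i\log|z_i(p)|\,v_i$ can remain bounded while $p\to p_\infty$. This can actually occur under the stated hypotheses. Take $X=\mathbb{P}^2$, $V=L\cup C$ with $L$ a line and $C$ a smooth conic meeting $L$ transversally, $m=1$, and $\omega_1=d\log(Q/\ell^2)$ where $Q,\ell$ are defining forms for $C,L$. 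Then $\omega_1$ is imaginary normalized (its periods are generated by $2\pi i\,\Res_{L}\omega_1=-4\pi i$ and $2\pi i\,\Res_{C}\omega_1=2\pi i$) and $\mathrm{Sing}\,\omega_1=V$, yet in coordinates $(u,v)$ at a point of $L\cap C$ with $L=\{u=0\}$, $C=\{v=0\}$ one has $\Log_\omega(p)=\log|v|-2\log|u|+O(1)$, which stays bounded along the arc $v=u^2$, $u\to0$; hence $\Log_\omega$ is not proper near that point. So the implication you flag cannot be derived from the hypotheses as stated: one needs an additional condition ensuring that the residue vectors of the components through any point of $V$ generate a pointed cone (this holds automatically in the classical case $\omega_j=dz_j/z_j|_X$, where these vectors are ray generators of a fan of a toric compactification). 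In short, your write-up follows the paper's route and honestly isolates its weak point, but as written it does not prove the statement --- and the paper's own ``observation'' is subject to the same objection.
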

\begin{proof}
Let us consider the one-point compactification of $\mathbb{R}^m,$ that is $S^m=\mathbb{R}^m \cup \infty.$ 
Observe that $||\Log_\omega(p)|| \rightarrow + \infty$ as $p \rightarrow p'\in V.$
Therefore there is a continuous map $\widetilde{\Log}_\omega: X \rightarrow S^m$ such that 
$\widetilde{\Log}_\omega|_{X\setminus V} = \Log_\omega$ and $\widetilde{\Log}_\omega(p)=\infty$ for $p\in V.$ 
The closed map lemma states that every continuous map $f : X \rightarrow Y$ from a compact space $X$ to a Hausdorff space $Y$ is closed and proper.
Therefore $\widetilde{\Log}_\omega$ is closed and proper.

Let $C$ be a closed set in $X\setminus V,$ consider $C$ as a subset of $X.$ Then the image of the closure $\overline{C}$ of $C$ in $X$ under the map 
$\widetilde{\Log}_\omega$ is a closed set in $S^m,$ whence $\widetilde{\Log}_\omega (\overline{C}) \cap \mathbb{R}^m$ is a closed set in $\mathbb{R}^m.$ 
Since $\widetilde{\Log}_\omega (\overline{C}) \cap \mathbb{R}^m = \Log_\omega(C),$ we see that
$\Log_\omega(C)$ is a closed set in $\mathbb{R}^m$ and $\Log_\omega$  is a closed map.

Let $C$ be a compact set in $\mathbb{R}^m,$ then $\widetilde{\Log}_\omega^{-1} (C)$ is a compact set in $X.$
Moreover $\widetilde{\Log}_\omega^{-1} (C)=\Log_\omega^{-1} (C),$ thus $\Log_\omega^{-1} (C)$ is a compact set in $X\setminus V$ and 
$\Log_\omega$ is a popper map. 

In particular, the amoeba $\mathcal{A}_\omega = \Log_\omega(X\setminus V)$ is a closed set in $\mathbb{R}^m$.
\end{proof}

\subsection{Critical points of the logarithmic map}
Now we are going to describe critical points of the map $\Log_\omega$.
First let us introduce some notation. For any point $z\in X\setminus V$ there is a holomorphic map
$$\Log^{\mathbb{C}}_{\omega,z}(p)=(\int^p_{z} \omega_1,\dots, \int^p_{z} \omega_m) \in \mathbb{C}^m,$$
this map is well-defined in a small neighborhood of the point $z.$
Consider its differential $$d_z \Log^{\mathbb{C}}_{\omega,z} : T_z (X\setminus V) \rightarrow  T_{0}\mathbb{C}^m\simeq \mathbb{C}^m$$ at the point $z.$
Let $L_z$ denote the image of $d_z \Log^{\mathbb{C}}_{\omega,z},$ we consider $L_z$ as a complex linear subspace of $\mathbb{C}^m.$

 \begin{proposition} \label{lcrit}
The rank of the differential of $\Log_{\omega}$ at a point $z\in X \setminus V$ equals
$$2 \dim_\mathbb{C} L_z - \dim_\mathbb{C} L_z \cap \overline{L_z}, $$
where $\overline{L_z}$ is the complex linear space conjugated to $L_z.$
The point $z\in X \setminus V$ is a critical point of the map $\Log_\omega$ if and only if
\begin{itemize}
  \item $L_z \cap \overline{L_z} \neq 0$  or  $\dim_\mathbb{C} L_z<n$, when $2n\leq m;$ 
  \item $2 \dim_\mathbb{C} L_z - \dim_\mathbb{C} L_z \cap \overline{L_z}< m,$ when $2n\geq m.$
\end{itemize}
\end{proposition}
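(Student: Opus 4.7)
First I would observe that since $\Log^{\mathbb{C}}_{\omega,z}$ is a local primitive of the holomorphic forms $(\omega_1,\dots,\omega_m)$, in a neighborhood of $z$ one has $\Log_\omega = \RRe\,\Log^{\mathbb{C}}_{\omega,z} + \Log_\omega(z)$, so the real differential satisfies $d_z\Log_\omega = \RRe\circ d_z\Log^{\mathbb{C}}_{\omega,z}$. Viewing $d_z\Log^{\mathbb{C}}_{\omega,z}$ as an $\mathbb{R}$-linear map from the real tangent space $T_z^{\mathbb{R}}(X\setminus V)$ (of real dimension $2n$) into $\mathbb{C}^m$, its image is exactly $L_z$, now regarded as a real subspace of real dimension $2\dim_{\mathbb{C}}L_z$. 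Hence the rank of $d_z\Log_\omega$ equals the real rank of the restriction $\RRe|_{L_z}\colon L_z\to\mathbb{R}^m$.

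The remaining computation is linear-algebraic. The kernel of $\RRe|_{L_z}$ is $L_z\cap i\mathbb{R}^m$, so the rank equals $2\dim_{\mathbb{C}}L_z-\dim_{\mathbb{R}}(L_z\cap i\mathbb{R}^m)$. To match the formula in the statement I would show $\dim_{\mathbb{R}}(L_z\cap i\mathbb{R}^m)=\dim_{\mathbb{C}}(L_z\cap\overline{L_z})$. The key point is that $L_z\cap\overline{L_z}$ is a complex subspace of $\mathbb{C}^m$ invariant under complex conjugation: if $w\in L_z\cap\overline{L_z}$ then $\bar w\in L_z\cap\overline{L_z}$ as well. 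Any such subspace is the complexification of its real part, which gives the real direct sum decomposition $L_z\cap\overline{L_z}=(L_z\cap\mathbb{R}^m)\oplus i(L_z\cap\mathbb{R}^m)$. Since $L_z$ is complex, multiplication by $i$ is a real-linear isomorphism $L_z\cap\mathbb{R}^m \to L_z\cap i\mathbb{R}^m$, so both summands have the same real dimension, namely $\dim_{\mathbb{C}}(L_z\cap\overline{L_z})$. Substituting yields $\mathrm{rank}\,d_z\Log_\omega = 2\dim_{\mathbb{C}}L_z-\dim_{\mathbb{C}}(L_z\cap\overline{L_z})$.

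For the characterization of critical points, $z$ is critical precisely when the differential fails to have maximal possible rank, namely rank strictly less than $\min(2n,m)$. In the case $2n\le m$ the maximal rank is $2n$; since $\dim_{\mathbb{C}}L_z\le n$ and $\dim_{\mathbb{C}}(L_z\cap\overline{L_z})\ge 0$, the rank formula shows that rank $=2n$ if and only if simultaneously $\dim_{\mathbb{C}}L_z=n$ and $L_z\cap\overline{L_z}=0$; negating gives the first bullet. In the case $2n\ge m$ the maximal rank is $m$, and the condition rank $<m$ is directly the second bullet. I expect the only nontrivial point to be the conjugation-invariance argument identifying $\dim_{\mathbb{R}}(L_z\cap i\mathbb{R}^m)$ with $\dim_{\mathbb{C}}(L_z\cap\overline{L_z})$; everything else is routine bookkeeping around the splitting of a holomorphic differential into real and imaginary parts.
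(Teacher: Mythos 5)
Your proposal is correct and follows essentially the same route as the paper: factor $d_z\Log_\omega$ as $\RRe\circ d_z\Log^{\mathbb{C}}_{\omega,z}$, compute the rank as $\dim_{\mathbb{R}}L_z$ minus the real dimension of $L_z\cap i\mathbb{R}^m$, and identify that kernel dimension with $\dim_{\mathbb{C}}(L_z\cap\overline{L_z})$ via the conjugation-invariant splitting of $L_z\cap\overline{L_z}$ into its real part and $i$ times its real part (the paper's $V_1\oplus V_2=V_1\oplus iV_1$ decomposition is exactly your complexification argument). The criticality discussion via rank $<\min(2n,m)$ also matches the paper.
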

This proposition is an extension to the generalized case of Theorem 2 from \cite{BT} and of Theorem 3.2 from \cite{MN}.

\begin{proof}
Given a point $z\in X\setminus V,$ then for any point $p$ in a small neighborhood of $z$ we have
$$\Log_{\omega,p_0}(p)=\RRe (\Log^{\mathbb{C}}_{\omega,z}(p)) + \Log_{\omega,p_0}(z).$$
Whence the differential $$d_z \Log_{\omega,p_0} : T_z (X\setminus V) \rightarrow  T_{x}\mathbb{R}^m \simeq \mathbb{R}^m,$$ where $x=\Log_{\omega,p_0}(z),$
can be represented as the composition of two maps
$$d_z \Log_{\omega,p_0}= \RRe \circ d_z \Log^{\mathbb{C}}_{\omega,z} 
:T_z (X\setminus V) \stackrel{d_z \Log^{\mathbb{C}}_{\omega,z}}{\rightarrow} \mathbb{C}^m \stackrel{\RRe}{\rightarrow} \mathbb{R}^m.$$
The rank of $d_z \Log_{\omega,p_0}$ is the dimension of $\im d_z \Log_{\omega,p_0},$ it is equal to
$$\dim_{\mathbb{R}} \im d_z \Log_{\omega,p_0}= \dim_{\mathbb{R}} L_z - \dim_{\mathbb{R}} (L_z\cap \ker \RRe).$$
Obviously, $v\in \ker \RRe = i \mathbb{R}^m \subset \mathbb{C}^m$ if and only if $\overline{v}=-v.$ 
So, if $v\in L_z\cap \ker \RRe,$ then $v\in \overline{L_z}$ and $v\in L_z\cap\overline{L_z}\cap \ker \RRe.$ 
Let us show that $$\dim_{\mathbb{R}} L_z\cap\overline{L_z}\cap \ker \RRe=\dim_{\mathbb{C}} L_z \cap \overline{L_z}.$$ 
Consider two \emph{real} linear subspaces of $L_z \cap \overline{L_z}:$
$$V_1=\{v\in L_z \cap \overline{L_z}: \overline{v}=v\},$$
$$V_2=\{v\in L_z \cap \overline{L_z}: \overline{v}=-v\}.$$
Obviously, $V_2=i V_1,$ i.e., $v\in V_1$ iff $i v\in V_2.$
Take a vector $v\in L_z \cap \overline{L_z}.$ Then $\overline{v}\in L_z \cap \overline{L_z}$
and there is a decomposition $$v= \frac{v+\overline{v}}{2}+\frac{v-\overline{v}}{2}, \frac{v+\overline{v}}{2}\in V_1,\frac{v-\overline{v}}{2}\in V_2,$$
therefore $L_z \cap \overline{L_z}=V_1\oplus V_2=V_1\oplus i V_1.$ Observe that $V_2=L_z\cap\overline{L_z}\cap\ker \RRe,$ 
whence
$$\dim_\mathbb{R}L_z\cap\overline{L_z}\cap\ker \RRe=\dim_\mathbb{R} V_2 = \frac{1}{2} \dim_\mathbb{R} L_z 
\cap \overline{L_z}= \dim_\mathbb{C} L_z \cap \overline{L_z}.$$
Hence the rank of the map $\Log_{\omega,p_0}$ at the point $z$ is equal to $$\dim_{\mathbb{R}} 
\im d_z \Log_{\omega,p_0}= 2 \dim_\mathbb{C} L_z - \dim_\mathbb{C} L_z \cap \overline{L_z}.$$ 

The point $z$ is critical iff the rank of the differential of the map $\Log_{\omega,p_0}$ 
is lesser then $$\min(\dim_\mathbb{R} \mathbb{R}^m, \dim_\mathbb{R} X\setminus V).$$
Using the formula for the rank we obtain the following condition
$$2 \dim_\mathbb{C} L_z - \dim_\mathbb{C} L_z \cap \overline{L_z}< \min(m, 2n),$$ 
this gives us the statement of the proposition.
\end{proof}

\begin{remark} Let us write down $L_z$ explicitly. Suppose $z_1,\dots,z_n$ are local coordinates in a neighborhood of
a point in $X\setminus V.$ Then locally we have $$\omega_j=\sum^n_{k=1}f_{jk}(z) d z_k.$$
The space $L_{z}$ is a $\mathbb{C}-$linear span of the vectors $$\phi_k(z)=(f_{1k}(z),\dots,f_{mk}(z)),k=1,\dots,n.$$
\end{remark}

\begin{remark} Let $\widetilde{X}\subset X\setminus V$ be a set of points $z\in X$ such that 
$\dim_{\mathbb{C}} L_z=n.$ If $\widetilde{X}\neq \emptyset,$  then
it is a dense open subset. It is easy to check that
$\widetilde{X}\neq \emptyset$ iff $n\leq m$ and the amoeba satisfies nondegeneracy 
condition (see below,  Definition \ref{df.ndc}).   Let us define the \emph{logarithmic Gauss map} as follows
$$G: \widetilde{X}\rightarrow \mathrm{Gr}(n,m),$$
 $$G(z)=[L_z]\in \mathrm{Gr}(n,m),$$
 where $[L_z]$ is the class of the vector subspace $L_z\subset \mathbb{C}^m$ in the Grassmannian $\mathrm{Gr}(n,m).$
 Critical points of the logarithmic map in the classical case
 were studied in terms of the logarithmic Gauss map \cite{BT},\cite{MN}.
\end{remark}

\subsection{Imaginary normalized holomorphic differentials on K\"{a}hler manifolds}
Let $\omega$ be a $1$-form $\omega\in H^0(X,\Omega^1_{X}(\log V)),$ $V=\bigcup_{j} D_j,$ such that $d \omega=0.$
\emph{ The residue $\Res_{D_j} \omega$ of the from $\omega$ along the divisor $D_j$}, by definition, is equal to
$$\Res_{D_j} \omega = \frac{1}{2 \pi i}\int_{\gamma_j} \omega,$$ where $\gamma_j$ is the canonically oriented
boundary of a small complex disc transversal to the hypersurface $D_j,$ i.e., let $p$ be an arbitrary point from 
$ D_j\setminus \bigcup_{q\neq j} D_q,$ and $z_1,\dots,z_n$ be a local coordinates in a neighborhood 
of $p$ such that $z_1=0$ is a local equation of $D_j,$ 
then 
\begin{equation}\label{eq.gammacycel}
 \gamma_j=\{|z_1|=\varepsilon, z_2=\dots=z_n=0\},
\end{equation}
where $\varepsilon$ is small and the orientation of $\gamma_j$ is natural, i.e.,
the differential form $\frac{1}{2\pi i} \frac{d z_1}{z_1}$ is positive on $\gamma_j.$
It is easy to see that the homological class of $\gamma_j$ in $H_1(X\setminus V)$ does not depend on the 
choice of $p,\varepsilon$  and local coordinates, this class depends only from $D_j,$ therefore $\Res_{D_j} \omega$ is well-defined.
In the local coordinates $z_1,\dots,z_n$ the differential form $\omega$ can be written as
$$\omega=\sum_{k=1}^r (Res_{D_{j_k}} \omega )\frac{d z_k}{ z_k} + \theta,$$ where $z_k=0$ is a local
equation of $D_{j_k}$ and $\theta$ is a closed holomorphic $1$-from.

\begin{theorem}\label{pr.1}
 Let $V=\bigcup^s_{j=1} D_j$ be a simple normal crossing divisor on a compact K\"{a}hler manifold $X.$
 \begin{enumerate}
   \item There is a $1$-form $\omega$ such that $\omega\in H^0(X,\Omega^1_{X}(\log V)),$ $d \omega=0$ on $X\setminus V,$ and $\Res_{D_j} \omega = a_j\in\mathbb{C}$ for $j=1,\dots,s,$   
   if and only if
 $$\sum^s_{j=1} a_j [D_j]=0$$
in $H_{2n-2}(X,\mathbb{C}).$
   \item Moreover, for any real numbers $a_1,\dots,a_s\in\mathbb{R}$ such that $\sum^s_{j=1} a_j [D_j]=0$ in $H_{2n-2}(X,\mathbb{R})$
   there exists a unique imaginary
   normalized holomorphic differential $\omega$ with logarithmic poles along $V$ such that $\Res_{D_j} \omega = a_j$ for any $j.$
 \end{enumerate}
\end{theorem}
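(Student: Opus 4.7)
The plan is to combine Deligne's theory of logarithmic differentials on compact Kähler manifolds with classical Hodge theory.

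For Part (1), I would first invoke Deligne's theorem that the spectral sequence $E_1^{p,q}=H^q(X,\Omega^p_X(\log V))\Rightarrow H^{p+q}(X\setminus V,\mathbb{C})$ degenerates at $E_1$; in particular the differential on $E_1^{1,0}$ vanishes, so every global logarithmic $1$-form is automatically $d$-closed and the closure hypothesis in Part (1) is free. I would then take the long exact cohomology sequence of the Poincaré residue sequence
\begin{equation*}
0 \longrightarrow \Omega^1_X \longrightarrow \Omega^1_X(\log V) \stackrel{\mathrm{Res}}{\longrightarrow} \bigoplus_{j=1}^s \mathcal{O}_{D_j} \longrightarrow 0,
\end{equation*}
obtaining
\begin{equation*}
H^0(X,\Omega^1_X) \to H^0(X,\Omega^1_X(\log V)) \stackrel{\mathrm{Res}}{\to} \bigoplus_j \mathbb{C} \stackrel{\delta}{\to} H^1(X,\Omega^1_X)=H^{1,1}(X).
\end{equation*}
The crux is the identification of $\delta$ with the cycle-class map $(a_j)\mapsto \sum_j a_j[D_j]$, which is a standard Čech-Dolbeault computation. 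Exactness then yields Part (1): a tuple $(a_j)$ is realized as a residue iff $\sum_j a_j[D_j]=0$ in $H^{1,1}(X)$, equivalently in $H_{2n-2}(X,\mathbb{C})$ by Poincaré duality.

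For Part (2), uniqueness is immediate. The difference $\eta=\omega_1-\omega_2$ of two candidates with the same residues lies in $H^0(X,\Omega^1_X)$, and all its periods are in $i\mathbb{R}$. Hodge theory on the compact Kähler manifold $X$ provides an $\mathbb{R}$-linear isomorphism $H^0(X,\Omega^1_X)\to H^1(X,\mathbb{R})$, $\eta\mapsto[\RRe(\eta)]$, because any real harmonic $1$-form decomposes uniquely as $\alpha^{1,0}+\overline{\alpha^{1,0}}$ with $\alpha^{1,0}$ holomorphic. Pure imaginarity of the periods of $\eta$ means $\RRe(\eta)$ has vanishing periods, hence represents the zero class; since $\RRe(\eta)$ is itself harmonic, it must vanish identically, forcing $\eta=0$ by type considerations.

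For existence, Part (1) furnishes some closed $\omega_0\in H^0(X,\Omega^1_X(\log V))$ with $\Res_{D_j}\omega_0=a_j$, which one then corrects by adding an appropriate $\eta\in H^0(X,\Omega^1_X)$. The linear functional $\gamma\mapsto\RRe\int_\gamma\omega_0$ on $H_1(X\setminus V,\mathbb{R})$ vanishes on the loops $\gamma_j$ encircling the components $D_j$ because $\int_{\gamma_j}\omega_0=2\pi i\,a_j\in i\mathbb{R}$ for real $a_j$. Since these loops generate the kernel of the surjection $H_1(X\setminus V,\mathbb{R})\twoheadrightarrow H_1(X,\mathbb{R})$, the functional descends to a class $c\in H^1(X,\mathbb{R})$; applying the Hodge isomorphism to $-c$ produces the required $\eta$, and $\omega=\omega_0+\eta$ is the unique imaginary normalized differential.

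The main obstacle is the explicit identification of the connecting homomorphism $\delta$ with the cycle-class map in Part (1); everything else is formal bookkeeping with the Hodge decomposition and the long exact sequence of the pair. A secondary technicality is showing that the loops $\gamma_j$ generate the full kernel of $H_1(X\setminus V)\to H_1(X)$ for an SNC divisor with possibly many components and intersections, which is handled by an inductive use of the Thom-Gysin sequence over the stratification of $V$.
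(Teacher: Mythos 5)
Your proposal is correct, but Part (1) follows a genuinely different route from the paper. The paper's proof is an explicit construction: it takes $\eta=\sum_j a_j\,\frac{1}{2\pi i}\partial\log|s_j|^2$ built from Hermitian metrics on the line bundles $\mathcal{O}(D_j)$, observes that $d\eta=\sum_j a_j[D_j]+\Theta$ with $\Theta$ a smooth $(1,1)$-form, uses the homological hypothesis to conclude that $\Theta$ is exact, and then corrects by $\partial f$ via the $\partial\overline{\partial}$-lemma; the ``only if'' direction is read off from the current identity $d\omega=\sum_j a_j[D_j]$. You instead run the long exact cohomology sequence of the Poincar\'e residue sequence together with Deligne's $E_1$-degeneration (which you need, correctly, to guarantee that the section produced by the sheaf sequence is automatically closed), reducing everything to the identification of the connecting homomorphism with the cycle class map. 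Both are sound; your version is shorter and more structural, but it outsources its two key inputs to the literature --- the degeneration theorem in the compact K\"ahler (not merely projective) setting, and the \v{C}ech--Dolbeault identification of $\delta$ with $(a_j)\mapsto\sum_j a_j c_1(\mathcal{O}(D_j))$ --- whereas the paper's explicit construction proves the relevant identification along the way and stays essentially self-contained. Note also that your criterion naturally lands in $H^{1,1}(X)$ rather than in $H_{2n-2}(X,\mathbb{C})\cong H^2(X,\mathbb{C})$; the two vanishing conditions agree because each class $[D_j]$ is of pure type $(1,1)$ and the Hodge decomposition is a direct sum, a point worth stating explicitly. For Part (2) your argument is essentially the paper's: the same exact sequence $\bigoplus_j H_0(D_j,\mathbb{R})\to H_1(X\setminus V,\mathbb{R})\to H_1(X,\mathbb{R})\to 0$ (the paper proves exactness by a direct transversality/tubular-neighborhood argument rather than Thom--Gysin, but either works) followed by correction with a global holomorphic $1$-form; your packaging of the existence step as the $\mathbb{R}$-linear isomorphism $H^0(X,\Omega^1_X)\cong H^1(X,\mathbb{R})$, $\eta\mapsto[\RRe(\eta)]$, is a cleaner, basis-free version of the paper's period-matrix nondegeneracy computation, and your uniqueness argument coincides with the paper's lemma that a holomorphic form on a compact K\"ahler manifold with purely real (or purely imaginary) periods vanishes.
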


\begin{remark}Theorem \ref{pr.1} has two meanings: first, there quite many generalized amoebas, 
because the existence conditions for normalized holomorphic differential is not too restrictive,
second, for a compact K\"{a}hler manifold $X$ and a given normal crossing divisor $V$ we can parameterize all possible generalized amoebas.
\end{remark}
\begin{proof}

The first part of this statement follows from the construction of the mixed Hodge structure on a smooth open 
quasi-projective variety, the second part follows from the first part and non-degeneracy of a period matrix of
holomorphic $1$-forms on $X.$ Now, let us give a proof.

Let us prove the "if" part of the statement $(1)$.
Let $L_j$ be a line bundle associated with the divisor $D_j,$ let us fix a hermitian metric $|\cdot|$ on $L_j.$ 
There is a holomorphic section $s_j$ such that the divisor $D_j$ is the zero set of $s_j.$
Set $\eta_j=\frac{1}{2 \pi i} \partial \log|s_j|^2.$
Suppose that $z_1=0$ is a local equation of $D_j$ in a neighborhood $U$ of a point $z\in X$, then $\eta_j$ locally looks like 
\begin{equation}\label{eq.locres}
 \eta_j=\frac{1}{2 \pi i} \frac{d z_1} {z_1}+ \theta_j,
\end{equation}
 where $\theta_j$ is a $1$-form with $C^\infty$-coefficients. Observe that $\eta_j$ is well-defined current acting on $C^\infty$-forms on $X,$ in other words,
the integral  $$[\eta_j](\psi)=\int_X \eta_j \wedge \psi$$ converges for any $C^\infty$-differential form $\psi$ on $X.$ Indeed, one can easily 
check this using partition of unity and local form (\ref{eq.locres}) of  $\eta_j.$

Observe that $d \eta_j=\overline{\partial} \eta_j = [D_j]+ \Theta_j,$ where $[D_j]$ is the currents of integration
over the divisor $D_j,$ i.e., $$[D_j](\psi)=\int_{D_j} \psi,$$ and 
$\Theta_j$ is a $(1,1)$-differential form on $X$ with $C^\infty-$coefficients. Indeed, taking the exterior derivative
of (\ref{eq.locres}) in the scene of currents we get this formula.

Consider the differential form
$$\eta=\sum^s_{j=1} a_j \eta_j,$$
then $$d \eta= \sum^s_{j=1} a_j [D_j] + \sum^s_{j=1} a_j \Theta_j.$$
By assumptions of the theorem, we have $\sum^s_{j=1} a_j [D_j]=0$ in $H_{2n-2}(X,\mathbb{C}),$ this implies that
the current of integration $\sum^s_{j=1} a_j [D_j]$ is cohomologous 
to zero, therefore it is exact and $\sum^s_{j=1} a_j [D_j]= d \mu$ for some current $\mu.$ 
Whence  $$\sum^s_{j=1} a_j \Theta_j= d (\eta-\mu)$$ 
is an exact $(1,1)$-differential form with $C^\infty-$coefficients on the  K\"{a}hler manifold $X.$ 
Using $\partial\overline{\partial}-$lemma we obtain a smooth function $f$ such that 
$$\overline{\partial}\partial f = \sum^s_{j=1} a_j \Theta_j.$$ 

Consider the differential form $$\omega = \eta - \partial f.$$ Since $d \omega = \sum^s_{j=1} a_j [D_j],$ 
we get $d \omega \equiv 0$ on $X\setminus V.$ Hence $\omega$ 
is a closed holomorphic form  on $X\setminus V.$ Moreover, by construction, $\omega$ has a 
logarithmic-singularities along $V$ and prescribed residues along $D_j, j=1,\dots,s$.
This proves "if" part of  statement $(1)$.

Let us prove the "only if" part of the statement $(1)$. Suppose there is a form $\omega$ 
such that $\omega\in H^0(X,\Omega^1_{X}(\log V)),$ $d \omega=0$ on $X\setminus V,$ and $\Res_{D_j} \omega = a_j\in\mathbb{C}$ for $j=1,\dots,s.$
Observe that  $\omega$ is a  well-defined current on $X$, indeed the integral $$
[\omega](\phi)=\int_X \omega \phi $$ converges for any $C^\infty$-forms $\phi$ on $X.$
Then, in the sense of currents, we have $d \omega = \sum^s_{j=1} a_j [D_j],$ where $[D_j]$ is the current of integration along $D_j.$
Thus $\sum^s_{j=1} a_j [D_j]$ is an exact current and $\sum^s_{j=1} a_j [D_j]=0$ 
in $H_{2n-2}(X,\mathbb{C}).$

Let us prove the "existence" part of the statement $(2)$.
Let $\widetilde{\omega}$ be a closed form from $H^0(X,\Omega^1_{X}(\log V))$ 
with residues $\Res_{D_j} \widetilde{\omega} = a_j\in \mathbb{R}$ for any $j.$ 
The periods of this form are not necessarily pure imaginary, we are going to
modify $\widetilde{\omega}$ in such a way that all periods of the modified form will be pure imaginary numbers.

Consider the sequence
\begin{equation}\label{sq.tau_i}
 \bigoplus^s_{j=1} H_{0}(D_j,\mathbb{R}) \stackrel{\tau}{\rightarrow} H_{1}(X\setminus V,\mathbb{R}) \stackrel{i_*}{\rightarrow} 
H_{1}(X,\mathbb{R})\rightarrow 0,
\end{equation}
where the map  $\tau$ takes a class of the point $p\in D_j$ to a small canonically oriented circle $\gamma_j$ around the hypersurface $D_j$  (\ref{eq.gammacycel}).
The map $i_*$ is induced by the inclusion $i: X \setminus V \hookrightarrow X.$  In general there are linear 
relations between classes of cycles $\gamma_1,\dots,\gamma_s.$
For $p\in D_j$ the following relation holds  \begin{equation}\label{eq.restau}\Res_{D_j} \omega = \frac{1}{2 \pi i}\int_{\tau (p)} \omega.\end{equation}

\begin{lemma}
The sequence (\ref{sq.tau_i}) is exact.
\end{lemma}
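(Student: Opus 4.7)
The plan is to recognize the sequence as a portion of the long exact homology sequence of the pair $(X, X \setminus V)$ with real coefficients, exploiting the fact that $V$ has real codimension $2$ in $X$ while its singular locus $V_{\mathrm{sing}} = \bigcup_{j \neq k}(D_j \cap D_k)$ has real codimension at least $4$. I would work geometrically with smooth singular chains, since transversality is clean in the $C^\infty$ category.

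First, surjectivity of $i_*$ follows from transversality: any smooth $1$-cycle in $X$ can be perturbed within its homology class to avoid the codimension-$2$ subset $V$, hence has a representative in $X \setminus V$. Second, $i_* \circ \tau = 0$ is immediate, since the meridional cycle $\gamma_j$ of (\ref{eq.gammacycel}) bounds the small complex disk $\{|z_1| \leq \varepsilon,\, z_2 = \cdots = z_n = 0\}$ inside $X$.

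The essential step is $\ker i_* \subseteq \im \tau$. Given a $1$-cycle $\gamma \subset X \setminus V$ with $i_*[\gamma] = 0$, write $\gamma = \partial \Sigma$ for a smooth $2$-chain $\Sigma$ in $X$. Perturb $\Sigma$ rel boundary so that it is transversal to each smooth stratum of $V$. Since $V_{\mathrm{sing}}$ has real codimension $4$, a generic such $\Sigma$ misses it entirely and meets $V$ transversally in finitely many points $p_1,\dots,p_N$, each lying on the smooth part of a unique component $D_{j(k)}$. Excising a small open disk around each $p_k$ produces a $2$-chain contained in $X \setminus V$ whose boundary equals $\gamma - \sum_k \varepsilon_k \gamma_{j(k)}$, where $\varepsilon_k = \pm 1$ is the local intersection number. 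Hence
$$[\gamma] \;=\; \sum_{j=1}^{s} \tau\Bigl(\sum_{k:\, j(k) = j} \varepsilon_k [p_k]\Bigr) \;\in\; \im \tau.$$

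The main obstacle will be making the transversality of $\Sigma$ to $V$ along the SNC stratification precise, and matching the signs on the excised boundary loops with the canonical orientation used to define $\gamma_j$. A more formal alternative is to compute $H_\ast(X, X\setminus V)$ by excision onto a tubular neighborhood of $V$ combined with a Mayer--Vietoris / Thom isomorphism argument along the strata of $V$: only the codimension-one strata $D_j$ contribute in degree $\leq 2$, yielding $H_2(X, X \setminus V) \cong \bigoplus_j H_0(D_j)$ and $H_1(X, X \setminus V) = 0$. The claimed sequence then appears as the tail of the long exact sequence of the pair, with the connecting homomorphism identified with $\tau$ by the very definition of the Thom class.
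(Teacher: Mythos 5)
Your argument is correct and is essentially the paper's own proof: surjectivity of $i_*$ by perturbing $1$-cycles off the codimension-$2$ set $V$, the inclusion $\im\tau\subseteq\ker i_*$ because each $\gamma_j$ bounds a small transversal disc, and the reverse inclusion by taking a $2$-chain $\Sigma$ with $\partial\Sigma=\gamma$, making it transversal to $V$, and excising neighborhoods of the intersection points so that the new boundary is $\gamma-\sum_j(\Sigma,D_j)\gamma_j$ (your $\sum_{k:j(k)=j}\varepsilon_k$ is exactly the paper's intersection number $(\delta,D_j)$). The Thom-isomorphism alternative you sketch at the end is a genuinely different and more systematic route, but your main argument matches the paper's.
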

\begin{proof}
First, let us prove that $i_*$ is an epimorphism. Let $e$ be a one-dimensional cycle in $X.$ Because $V$ has real codimension $2,$ we can deform $e$ to a cycle $e'$ in such a way that $[e']=[e]$ in $H_{1}(X,\mathbb{R})$ and 
$e'$ does not intersect $V.$ Therefore $ i^{-1} e'$ is a cycle in $X\setminus V$ and $i_*[i^{-1} e']=[e],$ whence the map $i_*$ is an epimorphism.

Because the cycle $\gamma_j$ is a boundary of a small disc transversal to $D_j,$ the $\im \tau$ is a subspace of $\ker i_*.$ Let us prove that $ \im \tau = \ker i_*.$
Consider a cycle $\gamma\in H_{1}(X\setminus V,\mathbb{R})$ such that $i_* [\gamma]=0.$ Then there is a chain $\delta$ in $C_2(X,\mathbb{R})$ such that  $\partial \delta = \gamma.$ Without lose of generality
we may think that $\delta$ is transversal to $V$ and there is a well-defined intersection number $(\delta,D_j)$ between $\delta$ and $D_j.$
Let $U_\varepsilon D_j$ be an $\varepsilon$-neighborhood of $D_j$ for some Riemannian metric on $X.$ Consider 
the following chain $$\delta_\varepsilon= \delta \setminus \bigcup^s_{j=1} U_\varepsilon D_j.$$
Then, for sufficiently small $\varepsilon,$ we get $\partial \delta_\varepsilon= \gamma - \sum^s_{j=1} (\delta, D_j) \gamma_j.$ Whence $[\gamma]\in \im \tau.$

\end{proof}
Using this exact sequence we can obtain a split $H_{1}(X\setminus V,\mathbb{R})=\im \tau \oplus L,$ where 
$L$ is some subspace in $H_{1}(X\setminus V,\mathbb{R}).$ Then $i_*$ is an isomorphism between $L$ and  $H_{1}(X,\mathbb{R}).$
Because $X$ is a K\"{a}hler manifolds, the dimension $\dim H_{1}(X,\mathbb{R})=2l$ is even.
Let $e_1,\dots,e_{2l}$ 
be a basis of $L$, then $i_* e_1,\dots,i_* e_{2l}$ is a basis of $H_{1}(X,\mathbb{R}).$

Let $\delta$ be a cycle in $H_{1}(X\setminus V,\mathbb{R}),$ we can write it as $\delta = \gamma + e,$ where $\gamma \in \im \tau $ and $e\in L.$
If $\gamma \in \mathrm{Im} \tau$, then from (\ref{eq.restau})  and the condition  $\Res_{D_j} \widetilde{\omega} = a_j\in \mathbb{R}$
we obtain $\int_\gamma \widetilde{\omega} \in i \mathbb{R}$. Thus $\int_\delta \widetilde{\omega} \in i \mathbb{R}$ iff $\int_e \widetilde{\omega} \in i \mathbb{R}.$

If there is a holomorphic $1$-form $\psi$ on $X$ such that $$\RRe\int_{i_* e_j} \psi = \RRe \int_{e_j} \widetilde{\omega}$$ for  $j=1,\dots,2l,$
then  $\widetilde{\omega}-\psi$ is a form with pure imaginary periods and with required residues, 
indeed, the addition of a global holomorphic form to $\widetilde{\omega}$ does not change the residues.
Because $X$ is a compact K\"{a}hler manifold, the cohomology  group $H^1(X,\mathbb{C})$ has the decomposition
\begin{equation}\label{eq.Hodge1}H^1(X,\mathbb{C})= H^{1,0}(X,\mathbb{C})\oplus H^{0,1}(X,\mathbb{C}),\end{equation} 
and \begin{equation}\label{eq.Hodge2}\overline{H^{1,0}(X,\mathbb{C})} = H^{0,1}(X,\mathbb{C}),\end{equation} 
these are standard facts from Hodge theory of K\"{a}hler manifolds.
Let $\psi_1,\dots,\psi_l$ be a basis of holomorphic $1$-forms of $H^{1,0}(X,\mathbb{C}).$
From  (\ref{eq.Hodge1}) and (\ref{eq.Hodge2}), it follows that
$$\varphi_1=\frac{\psi_1+\overline{\psi}_1}{2},\dots,\varphi_l=\frac{\psi_l+\overline{\psi}_l}{2},
\varphi_{l+1}=\frac{\psi_1-\overline{\psi}_1}{2 i},\dots,\varphi_{2l}=\frac{\psi_l-\overline{\psi}_l}{2 i}$$ 
is a basis of $H^1(X,\mathbb{R}).$ 
Consider the period matrix $A=(x_{jk}+i y_{jk}),$ where
$$\int_{i_* e_k} \psi_j =x_{jk}+i y_{jk},j=1,\dots,l,k=1,\dots,2l; x_{jk},y_{jk}\in \mathbb{R}.$$
For any holomorphic form $\psi\in H^{1,0}(X,\mathbb{C})$ there is a basis decomposition
$$\psi=\sum^l_{j=1} (u_j-i v_j) \psi_j,$$ where  $u_j,v_j\in \mathbb{R}.$
We would like to find $u_j, v_j$ such that
$$\RRe \int_{i_* e_k} \psi = \sum_j u_j x_{jk} + \sum_j v_j y_{jk} = \RRe \int_{i_* e_k} \widetilde{\omega},$$
one can solve this system of linear equations if the matrix $$M=\left(
                                                          \begin{array}{c}
                                                            \RRe A \\
                                                            \im A \\
                                                          \end{array}
                                                        \right)$$
is nondegenerate. This matrix is the matrix of pairing between
$i_* e_1,\dots, i_* e_{2l}$ and $\varphi_1,\dots, \varphi_{2l},$ i.e., 
$M_{jk}=\int_{i_* e_k} \varphi_j.$ Since $i_* e_1,\dots, i_* e_{2l}$ is a basis of 
$H_1(X,\mathbb{R})$ and $\varphi_1,\dots, \varphi_{2l}$ is a basis of $H^1(X,\mathbb{R}),$ 
 the pairing between them is nondegenerate, whence $M$ is a nondegenerate matrix and
for any real values $\RRe \int_{e_1} \widetilde{\omega},\dots, \RRe \int_{e_{2l}} \widetilde{\omega}$ 
there is a holomorphic $1$-form $\psi$ such that $\RRe \int_{i_* e_k} \psi=\RRe \int_{e_k} \widetilde{\omega}.$ 
Then the form $\omega=\widetilde{\omega}-\psi$ is an imaginary normalized holomorphic differential with a given residues.

Let us show uniqueness. Suppose there are two normalized holomorphic differentials
$\omega_1, \omega_2$ such that  $\Res_{D_j} \omega_r = b_j$ for $r=1,2$ and any $j.$ 
Locally these differentials has the following form $$\omega_r=\sum_{k} b_{j_k} \frac{d z_k}{ z_k} + \theta_r,$$ where $\theta_r$ is a 
holomorphic $1$-form and $z_k=0$ is a local equation of the divisor $D_{j_k}$, therefore the singular parts of $\omega_1$ and $\omega_2$ are equal. 
Thus $\psi=\omega_1-\omega_2$ is a holomorphic $1$-form on $X.$ 

\begin{lemma}\label{lm.form}
Let $\psi$ be a closed holomorphic $p$-differential form on a compact K\"{a}hler manifold $X.$
If $\int_\gamma \psi \in \mathbb{R}$  for any cycle $\gamma\in H_p(X,\mathbb{R}),$
then 
$\psi\equiv0.$
\end{lemma}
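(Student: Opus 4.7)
The plan is to translate the hypothesis into a statement about the de Rham cohomology class $[\psi] \in H^p(X,\mathbb{C})$ and then exploit the Hodge decomposition on the K\"{a}hler manifold $X$.

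First I would observe that the condition $\int_\gamma \psi \in \mathbb{R}$ for every $\gamma \in H_p(X,\mathbb{R})$ is exactly the statement that the class $[\psi]$ lies in the image of the natural inclusion $H^p(X,\mathbb{R}) \hookrightarrow H^p(X,\mathbb{C})$; equivalently, $[\psi] = \overline{[\psi]}$. Next, because $\psi$ is a holomorphic $p$-form, its class lies in the Hodge summand $H^{p,0}(X,\mathbb{C})$. Complex conjugation interchanges the Hodge types, sending $H^{r,s}$ to $H^{s,r}$, so $\overline{[\psi]} = [\overline{\psi}] \in H^{0,p}(X,\mathbb{C})$.

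The key step is then to combine these: since $[\psi] = \overline{[\psi]}$, the class lies simultaneously in $H^{p,0}(X,\mathbb{C})$ and in $H^{0,p}(X,\mathbb{C})$. The Hodge decomposition $H^p(X,\mathbb{C}) = \bigoplus_{r+s=p} H^{r,s}(X,\mathbb{C})$ is a direct sum, so for $p \geq 1$ these two summands intersect trivially, forcing $[\psi] = 0$ in $H^p(X,\mathbb{C})$.

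Finally, I would invoke the standard fact from Hodge theory on compact K\"{a}hler manifolds that every holomorphic $p$-form is harmonic (its K\"{a}hler Laplacian vanishes because $\overline{\partial}\psi = 0$ and $\overline{\partial}^*\psi = 0$ for type reasons, and $\Delta = 2\Delta_{\overline{\partial}}$). Since each de Rham cohomology class on a compact K\"{a}hler manifold is represented by a unique harmonic form, the vanishing of $[\psi]$ implies $\psi \equiv 0$ as a form. The main conceptual obstacle is really the last step, where one must be careful to use that \emph{harmonic} representatives are unique in their cohomology class, so that $[\psi]=0$ upgrades from a cohomological statement to the pointwise vanishing of $\psi$; everything else is a direct application of the Hodge decomposition recalled earlier in the excerpt (equations (\ref{eq.Hodge1}), (\ref{eq.Hodge2})), extended from degree $1$ to arbitrary degree $p$.
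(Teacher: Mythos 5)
Your proof is correct and is essentially the paper's argument: both rest on the facts that a holomorphic form on a compact K\"{a}hler manifold is harmonic and that the Hodge decomposition is a direct sum, the only difference being that the paper runs the argument at the level of forms (showing $\psi-\overline{\psi}$ has zero periods, hence is exact, hence is an exact harmonic form and so vanishes, and then separating bidegrees) while you run it at the level of cohomology classes ($[\psi]\in H^{p,0}\cap H^{0,p}=0$, then uniqueness of harmonic representatives). Your explicit remark that $p\geq 1$ is needed is a small improvement, since the statement fails for $p=0$ and the paper leaves this hypothesis implicit.
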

\begin{proof}
Since $\int_\gamma \psi \in \mathbb{R}$ 
for any cycle $\gamma \in H_p(X,\mathbb{R}),$ we have $\int_\gamma\psi-\overline{\psi} = 0,$ where $\overline{\psi}$ 
is a complex conjugated form, 
which is an antiholomorphic $p$-form. Thus $\psi-\overline{\psi}$ is exact. Since $X$ is a compact K\"{a}hler 
manifold, $\psi$ and $\overline{\psi}$ are harmonic forms. On a compact K\"{a}hler manifold a harmonic 
form is exact if and only if it is identically zero, 
therefore $\psi-\overline{\psi}\equiv0.$ Because  $\psi$ and $\overline{\psi}$ have different bidegrees, we get $\psi\equiv0.$
\end{proof}

By assumptions we have $\int_\gamma \psi \in i \mathbb{R}$ 
for any cycle $\gamma \in H_1(X,\mathbb{R}).$
Thus we can apply Lemma \ref{lm.form} to the form $i \psi,$ and we get $\psi\equiv0.$ Whence $\omega_1=\omega_2.$
\end{proof}

\subsection{Asymptotic behavior of an amoeba and its asymptotic fan}

Recall that $V=\bigcup^s_{j=1} D_j.$
Let us denote $$v_j=(-\Res_{D_j} \omega_1,\dots,-\Res_{D_j} \omega_{m}).$$
We denote by $\Sigma_S$ the cone in $\mathbb{R}^m$ spanned by $v_j,j\in S\subset \{1,\dots,s\},$ i.e., 
$$\Sigma_S=\{v=\sum_{j\in S} \lambda_j v_j\in \mathbb{R}^m: \lambda_j\geq 0\}.$$

\begin{definition}
The \emph{asymptotic fan} $\Sigma_\omega$ of the generalized amoeba $\mathcal{A}_\omega $ is the set of all 
cones $\Sigma_S$ such that $\bigcap_{j\in S} D_j\neq \emptyset.$
The \emph{ support of the asymptotic fan} is the following set
 $$|\Sigma_\omega|=\bigcup_{\{S:\bigcap_{j\in S} D_j\neq \emptyset\}} \Sigma_S.$$
\end{definition}
 
\begin{remark}
We should warn that in general an asymptotic fan is not a fan in the sense of toric varieties. 
In the general case some cones may intersect each other by their interior points, but 
the standard definition of a fan requires that an intersection 
of two cones is their common face.
In the case when $m=n+1$ and $\mathcal{A}_F$ is a classical amoeba, 
the support of the corresponding asymptotic fan $|\Sigma_F|$ is the support of the $n$-skeleton of the normal fan of the Newton polytope $N_F.$ 
(The $n$-skeleton of a polyhedral complex is the set of faces of dimension at most $n$. 
See definition of the Newton polytope in the next section). 
\end{remark}

To make the next statement we need to introduce some notation. The \emph{ Hausdorff distance} between two sets $A,B\subset \mathbb{R}^m$ is equal to
$$ d_H(A,B)=\max(\sup_{a\in A}  \underset{b\in B}{\inf\vphantom{\sup}}\parallel a-b \parallel,\sup_{b\in B} \underset{a\in A}{\inf\vphantom{\sup}}\parallel a-b \parallel).$$
For a given positive real number $c$  the \emph{ $c$-neighborhood} $U_c(A)$ of a set $A\subset\mathbb{R}^m$ is the following set
$$U_c(A)=\{x\in \mathbb{R}^m: \exists y\in A \mbox{ such that } \parallel x-y \parallel<c\}.$$
\begin{proposition}\label{pr.fan} There is a constant $c>0$ such that 
$\mathcal{A}_\omega \subset U_c (|\Sigma_\omega|)$ and $|\Sigma_\omega| \subset U_c (\mathcal{A}_\omega).$ 
Consider an amoeba $\mathcal{A}_{\frac{1}{t}\omega},$ where 
${\frac{1}{t}\omega}:= ( \frac{1}{t} \omega_1 ,\dots, \frac{1}{t} \omega_m ),$ then 
$$\lim_{t\rightarrow +\infty} d_H(\mathcal{A}_{\frac{1}{t}\omega},|\Sigma_\omega|)=0.$$
\end{proposition}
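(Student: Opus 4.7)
The plan is to carry out a local analysis of $\Log_\omega$ near each stratum of $V$ using the logarithmic pole structure of the $\omega_q$, then to patch the local bounds together via compactness, and finally to deduce the asymptotic statement by a trivial scaling argument. The key computation is the following local expansion. Fix $S\subseteq\{1,\dots,s\}$ with $\bigcap_{j\in S}D_j\neq\emptyset$, pick $p_0\in\bigcap_{j\in S}D_j$, and choose local coordinates $z_1,\dots,z_n$ near $p_0$ so that $V=\{z_1\cdots z_k=0\}$ locally, with $D_{j_\ell}=\{z_\ell=0\}$ and $S\subseteq\{j_1,\dots,j_k\}$. Each $\omega_q$ has the expansion
\[\omega_q=\sum_{\ell=1}^{k}(\Res_{D_{j_\ell}}\omega_q)\,\frac{dz_\ell}{z_\ell}+\theta_q,\]
where $\theta_q$ is holomorphic on the chart. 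The imaginary normalized condition forces each residue to be real (since $\Res_{D_j}\omega_q=(2\pi i)^{-1}\!\int_{\gamma_j}\omega_q$ and $\int_{\gamma_j}\omega_q\in i\mathbb R$). Integrating from a fixed base point and taking real parts gives, for $p=(z_1,\dots,z_n)$ in a relatively compact subchart,
\[\Log_\omega(p)=\sum_{\ell=1}^{k}(-\log|z_\ell|)\,v_{j_\ell}+g(p),\]
with $g$ uniformly bounded on the subchart.

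For the inclusion $\mathcal{A}_\omega\subset U_c(|\Sigma_\omega|)$, I would cover $X$ by finitely many such charts around points of the various strata of $V$, together with finitely many relatively compact charts contained in $X\setminus V$. On each chart contained in $X\setminus V$ the map $\Log_\omega$ is bounded. On a chart adapted to a stratum indexed by $\{j_1,\dots,j_k\}$, the coefficients $-\log|z_\ell|$ are non-negative (since $|z_\ell|<1$), so the local expansion above places $\Log_\omega(p)$ within a uniform distance of the cone $\Sigma_{\{j_1,\dots,j_k\}}\subseteq|\Sigma_\omega|$. Taking the maximum of the finitely many bounds yields a single constant $c$.

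For the reverse inclusion $|\Sigma_\omega|\subset U_c(\mathcal{A}_\omega)$, I would work through each $S$ with $\bigcap_{j\in S}D_j\neq\emptyset$ and use the same chart around a chosen $p_0\in\bigcap_{j\in S}D_j$. Given $v=\sum_{j_\ell\in S}\lambda_\ell v_{j_\ell}\in\Sigma_S$, pick a constant $T>0$ so that $|z|<e^{-T}$ fits inside the chart, set $|z_\ell|=e^{-\max(\lambda_\ell,T)}$ for $j_\ell\in S$ and $|z_\ell|=e^{-T}$ for $j_\ell\notin S$ (phases arbitrary). The local expansion then places $\Log_\omega$ of this point within a uniform distance of $v$ (the shifts $\max(\lambda_\ell,T)-\lambda_\ell$ and the extra terms $-Tv_{j_\ell}$ for $j_\ell\notin S$ are all bounded by constants depending only on $T$ and the finitely many $v_j$'s). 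Since only finitely many subsets $S$ need to be treated, a uniform constant exists.

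The asymptotic statement is then essentially free: $\Log_{\frac{1}{t}\omega}=\tfrac{1}{t}\Log_\omega$, so $\mathcal{A}_{\frac{1}{t}\omega}=\tfrac{1}{t}\mathcal{A}_\omega$, while $|\Sigma_\omega|$ is invariant under positive scaling, and the Hausdorff distance is homogeneous, giving
\[d_H\bigl(\mathcal{A}_{\frac{1}{t}\omega},|\Sigma_\omega|\bigr)=\tfrac{1}{t}\,d_H\bigl(\mathcal{A}_\omega,|\Sigma_\omega|\bigr)\leq \tfrac{c}{t}\to 0.\]
The main obstacle is establishing the local expansion uniformly and making sure that the finitely many charts in the cover really capture every direction of divergence of the amoeba; once the decomposition and the reality of the residues are in hand, the rest is bookkeeping and an elementary scaling argument.
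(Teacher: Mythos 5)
Your proposal is correct and follows essentially the same route as the paper: a finite cover of $X$ by polydiscs adapted to the normal crossings of $V$, the local logarithmic expansion of $\Log_\omega$ with a bounded remainder showing that the image of each chart lies within a bounded distance of the corresponding cone $\Sigma_S$ and conversely, and the observation that $\Log_{\frac{1}{t}\omega}=\frac{1}{t}\Log_\omega$ together with the scale-invariance of $|\Sigma_\omega|$ for the limit. The only cosmetic difference is that the paper obtains the reverse inclusion by noting that the model map $\Log_{a_1,\dots,a_r}$ on the unit polydisc has image exactly equal to the cone, whereas you construct explicit preimages; these amount to the same bookkeeping.
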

\begin{proof}
Let $\{U_l\}^N_{l=1}$ be a finite cover of $X$ such that $U_l$ is a compact unit polydisc with 
local coordinates $z^l_1,\dots,z^l_n,$ i.e., $$U_l=\{(z^l_1,\dots,z^l_n)\in \mathbb{C}^n:||z^l_k||\leq 1\},$$ and $V$ is given by 
the equation ${z^l_1\cdot}\dots{\cdot z^l_{r(l)}}=0$ in $U_l$, where $z^l_k=0$ is a local equation of the divisor $D_{r(l,k)},$ here $r(l,k), r(l)$
are some numbers depending on $l$ and $k$. Because $X$ is compact and $V$ has simple normal crossings this cover exists. 
In the local coordinates we have
$$\omega_j|_{U_l}=\sum^{r(l)}_{k=1} (Res_{D_{r(l,k)}} \omega_j) \frac{d z^l_k}{z^l_k} + \varphi_{lj}(z),$$
where $\varphi_{lj}(z)$ is a closed form on $U_l$ and $\varphi_{lj}(z)$ is holomorphic in some open neighborhood of $U_l$.

Fix a set of points $\{p_1,\dots,p_N\}, p_l\in U_l\setminus V,$
and consider the function $$F_{lj}(p)=\int^p_{p_l} \varphi_{lj}(z),$$ it is bounded on $U_l.$
Recall that
$$\Log_\omega(p)=(\RRe \int^p_{p_0} \omega_1,\dots, \RRe \int^p_{p_0} \omega_m),$$
hence for $p$ from $U_l$ the $j$-th component of this map looks like
$$(\Log_\omega(p))_j=\sum^{r(l)}_{k=1} (Res_{D_{r(l,k)}} \omega_j )\log |z^l_k(p)| + \RRe F_{lj}(p)  + \RRe \int^{p_l}_{p_0} \omega_j,$$
where $z^l_k(p)$ are local coordinates of the point $p.$

Consider the vectors $a_k=(-a_{k1},\dots,-a_{km})\in \mathbb{R}^m,k=1,\dots,r,$ and define the map $\Log_{a_1,\dots,a_r}$ 
from the compact unit polydisc $$U=\{(z_1,\dots,z_n)\in \mathbb{C}^n:||z^l_k||\leq 1\},$$ to $\mathbb{R}^m:$
$$\Log_{a_1,\dots,a_r}(z_1,\dots,z_n)=(\sum^{r}_{k=1} a_{k1} \log |z_k|,\dots,\sum^{r}_{k=1} a_{km} \log |z_k|).$$
Then the image of $\Log_{a_1,\dots,a_r}$ is equal to the cone
$$\Sigma_{a_1,\dots,a_r}=\{a=\sum^r_{k=1} \lambda_k a_k\in \mathbb{R}^m: \lambda_j\geq 0\}.$$

For $p\in U_l$ we get
$$(\Log_\omega(p))_j= (\Log_{v_{r(l,1)},\dots,v_{r(l,r(l))}} (z^l(p)))_j + \RRe F_{lj}(p)  + \RRe \int^{p_l}_{p_0} \omega_j.$$
Because $\RRe F_{lj}(p)  + \RRe \int^{p_l}_{p_0} \omega_j$ is bounded on $U_l$, there is a constant $c_l>0$ such that
the image $\Log_\omega(U_l)$ of $U_l$ under the map $\Log_\omega$ is a subset of  $c_l-$neighborhood 
$U_{c_l}(\Sigma_{v_{r(l,1)},\dots,v_{r(l,r(l))}})$ and $\Sigma_{v_{r(l,1)},\dots,v_{r(l,r(l))}}$ 
is a subset of $c_l-$neighborhood $U_{c_l}(\Log_\omega(U_l)).$  
Observe that $\bigcup^N_{l=1} \Sigma_{v_{r(l,1)},\dots,v_{r(l,r(l))}}$ equals  $|\Sigma_\omega|,$
therefore $\mathcal{A}_\omega \subset U_c (\Sigma_\omega)$ and $|\Sigma_\omega| \subset U_c (\mathcal{A}_\omega)$ for $c=\max\{c_1,\dots,c_N\}.$ 
Moreover, this means that  $d_H(\mathcal{A}_{\omega},|\Sigma_\omega|)\leq c.$ 

Because
$$(\Log_{\frac{1}{t} \omega}(p))_j=\sum^{r(l)}_{k=1} (\frac{1}{t} Res_{D_{r(l,k)}} \omega_j )\log |z^l_k(p)| 
+ \RRe \frac{1}{t} F_{lj}(p)  + \RRe \frac{1}{t} \int^{p_l}_{p_0} \omega_j$$ and $\Sigma_{a_1,\dots,a_r}=\Sigma_{\frac{1}{t}a_1,\dots,\frac{1}{t}a_r}$ 
for any $t>0,$ we get $\mathcal{A}_{\frac{1}{t} \omega} \subset U_{\frac{1}{t}c} (|\Sigma_\omega|)$ 
and $|\Sigma_\omega| \subset U_{\frac{1}{t}c} (\mathcal{A}_{\frac{1}{t} \omega}).$ Thus, 
$$\lim_{t\rightarrow +\infty} d_H(\mathcal{A}_{\frac{1}{t}\omega},|\Sigma_\omega|)=0.$$

\end{proof}

\begin{remark}
This proposition describes the shape of an amoeba $\mathcal{A}_\omega$ if we look on it from the "infinity", 
one can consider $\Sigma_\omega$ as some sort of the "tropical limit" of $\mathcal{A}_{\frac{1}{t}\omega}.$ 
Unfortunately this "tropical limit" is degenerate, because it reflects only asymptotic behavior of an amoeba and forgets all 
topological structure of compact part of an amoeba. To get a good "tropical limit" one need consider simultaneous deformation 
of $\omega$ and of the pair $(X,V).$
In particular, the appropriate tropical limit of generalized amoebas of complex curves was studied in \cite{Lan}.
\end{remark}

\subsection{Nondegeneracy condition}
Let $J$ be a subset of \{1,\dots,m\} of cardinality $n$.
Consider the differential form
$$\Omega_J=\omega_{j_1}\wedge\dots\wedge\omega_{j_n},$$
where $j=\{j_1,\dots,j_n\}, j_1< \dots < j_n.$

\begin{definition}\label{df.ndc} We say that the amoeba $\mathcal{A}_\omega$ satisfies \emph{the nondegeneracy condition}
if there exists a subset $J\subset \{1,\dots,m\}, |J|=n$ such that
$\Omega_J\not \equiv 0$ on $X\setminus V.$
\end{definition}

\begin{proposition}\label{pr.nondeg}
If $\dim \Sigma_\omega: = \max_{\{S:\bigcap_{j\in S} D_j\neq \emptyset\}} \dim \Sigma_S$ is equal to $n$ then $\mathcal{A}_\omega$
satisfies the nondegeneracy condition.
Suppose that $X$ is a K\"{a}hler manifold, then $\dim \Sigma_\omega=n$ if and only if $\mathcal{A}_\omega$ satisfies the nondegeneracy condition.
\end{proposition}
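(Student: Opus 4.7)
The plan is to treat the two implications separately. The first holds without any Kähler assumption and reduces to a local calculation at an $n$-fold stratum; the second is a contrapositive-type argument relying on the uniqueness in Theorem~\ref{pr.1} and on Lemma~\ref{lm.form}. For the easy direction, assume $\dim \Sigma_\omega = n$: choose $S$ with $\bigcap_{j \in S} D_j \neq \emptyset$ and $\dim \Sigma_S = n$, pick $n$ linearly independent vectors $v_{j_1}, \ldots, v_{j_n}$ from $\{v_j : j \in S\}$, and observe that $\bigcap_{k=1}^n D_{j_k}$ is still nonempty. At any $p$ in this smaller intersection the SNC condition furnishes coordinates $z_1, \ldots, z_n$ with $D_{j_k} = \{z_k = 0\}$, in which $\omega_l = \sum_{k=1}^n (\Res_{D_{j_k}} \omega_l)\,\frac{dz_k}{z_k} + (\mathrm{holomorphic})$. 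The $m \times n$ residue matrix has columns $-v_{j_k}$ and hence rank $n$, so some $n \times n$ submatrix indexed by $J = \{l_1, \ldots, l_n\}$ is invertible. The leading polar term of $\Omega_J$ at $p$ is $\det(\Res_{D_{j_k}} \omega_{l_\alpha})\,\frac{dz_1 \wedge \cdots \wedge dz_n}{z_1 \cdots z_n}$ with nonzero coefficient, so $\Omega_J \not\equiv 0$.

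For the converse on Kähler $X$ I argue the contrapositive: $\dim \Sigma_\omega < n$ forces every $\Omega_J \equiv 0$. The core preliminary lemma is that any wedge $\Omega = \omega_{l_1} \wedge \cdots \wedge \omega_{l_n}$ of imaginary normalized log 1-forms which happens to extend to a holomorphic $n$-form on all of $X$ must be identically zero. To prove it, decompose $\omega_l = du_l + i\,dv_l$ where $u_l = \RRe\int\omega_l$ is single-valued on $X \setminus V$ (by imaginary normalization) and $dv_l$ is the closed real 1-form whose multivalued primitive has real periods. Expanding
\begin{equation*}
\Omega = \sum_{B \subset \{1,\ldots,n\}} i^{|B|} \Bigl(\bigwedge_{k \in B} dv_{l_k}\Bigr) \wedge \Bigl(\bigwedge_{k \notin B} du_{l_k}\Bigr),
\end{equation*}
every summand with $B \neq \{1, \ldots, n\}$ contains a factor $du_a$ and is exact on $X \setminus V$ via $du_a \wedge \alpha = d(u_a \alpha)$ for closed $\alpha$, so contributes zero to $\int_{\sigma} \Omega$ over any closed $n$-cycle $\sigma$ in $X \setminus V$. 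Pushing a real $n$-cycle in $X$ off $V$ (which has real codimension two), we obtain $\int_\sigma \Omega = i^n \int_\sigma \bigwedge_k dv_{l_k} \in i^n \mathbb{R}$ for every $\sigma \in H_n(X, \mathbb{R})$. Thus $i^{-n}\Omega$ is a holomorphic $n$-form on the compact Kähler $X$ with real periods, and Lemma~\ref{lm.form} gives $\Omega \equiv 0$.

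It remains to show that $\dim \Sigma_\omega < n$ forces $\Omega_J$ to be holomorphic on $X$, i.e., $\Res_{D_t} \Omega_J \equiv 0$ for every component $D_t$ of $V$. The plan is to induct on $n = \dim X$ using the residue formula
\begin{equation*}
\Res_{D_t} \Omega_J = \sum_{i=1}^{n}(-1)^{i-1}(\Res_{D_t}\omega_{l_i})\bigwedge_{k \neq i}(\omega_{l_k}|_{D_t}),
\end{equation*}
which lives on the compact Kähler $(n-1)$-fold $D_t$ with SNC divisor $V \cap D_t$; the restrictions $\omega_{l_k}|_{D_t}$ are again imaginary normalized log 1-forms (restriction preserves the imaginary-period condition, since a cycle $\gamma \subset D_t \setminus V$ also lies in $X \setminus V$). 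Whenever $\dim \Sigma_{\omega|_{D_t}} < n-1$ the inductive hypothesis annihilates each $(n-1)$-wedge in the sum and hence the residue. The main obstacle is the borderline case $\dim \Sigma_{\omega|_{D_t}} = n-1$ together with $\dim \Sigma_\omega < n$ on $X$, which forces the residue vector $v_t$ into the span of the other $v_j$'s that appear along $D_t$; here the plan is to exploit the pointwise interior-product identity $\Res_{D_t}\Omega_J = \iota_V \bigl(\bigwedge_k \omega_{l_k}|_{D_t}\bigr)$ with $V \in T D_t$ determined by $\omega_{l_k}|_{D_t}(V) = \Res_{D_t}\omega_{l_k}$, observing that $\bigwedge_{k=1}^n \omega_{l_k}|_{D_t}$ automatically vanishes on the $(n-1)$-fold $D_t$, and to reduce the pointwise solvability of this linear system to the global residue-vector relations supplied by Theorem~\ref{pr.1}'s uniqueness combined with $\dim \Sigma_\omega < n$.
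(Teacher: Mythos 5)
Your first implication (no K\"{a}hler hypothesis needed) is correct and is exactly the paper's argument: locate a point of an $n$-fold intersection realizing $\dim\Sigma_S=n$ and read off the nonzero $n\times n$ minor of the residue matrix as the coefficient of $\frac{dz_1\wedge\cdots\wedge dz_n}{z_1\cdots z_n}$ in $\Omega_J$. Your ``core preliminary lemma'' is also fine: the decomposition $\omega_l=du_l+i\,dv_l$ with $u_l$ single-valued gives $\int_\sigma\Omega\in i^n\mathbb{R}$, and Lemma~\ref{lm.form} finishes; this is a slightly more explicit version of the paper's remark that $\Omega_J\in H^n(X\setminus V,(i)^n\mathbb{R})$.

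The converse direction, however, has a genuine gap, and you have in effect flagged it yourself. Two problems. First, the claim that $\omega_{l_k}|_{D_t}$ is again an imaginary normalized logarithmic form is both ill-posed and wrongly justified: the non-polar part of $\omega_{l_k}$ along $D_t$ is not a canonically defined form when $\Res_{D_t}\omega_{l_k}\neq 0$ (it changes under $z_1\mapsto uz_1$), and the justification ``a cycle $\gamma\subset D_t\setminus V$ also lies in $X\setminus V$'' fails because $D_t\subset V$, so such a cycle is never in $X\setminus V$; one must lift $\gamma$ to the boundary of a tube and track what happens to the polar term. Second, and decisively, the ``borderline case'' $\dim\Sigma_{\omega|_{D_t}}=n-1$ with $\dim\Sigma_\omega<n$ is exactly where the work lies, and your proposal only sketches a plan (an interior-product identity plus ``global residue-vector relations'') without carrying it out; as written the induction does not close. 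The paper sidesteps both difficulties by inducting on a different, weaker statement (Lemma~\ref{pr.reszero}): the inductive hypotheses are (a) $\int_\gamma\Omega\in\mathbb{R}$ for \emph{all} $\gamma\in H_n(X\setminus V,\mathbb{R})$ and (b) $\int_{\gamma_p(X)}\Omega=0$ at every $n$-fold intersection point $p$ --- condition (b) being precisely where $\dim\Sigma_\omega<n$ enters, via the vanishing of the minors $V_J$. Both conditions transfer to $\Res_j\Omega$ on $D_j$ through the tube map $\delta_j$ and the adjunction $\int_\gamma\Res_j\Omega=\int_{\delta_j\gamma}\Omega$, with no need to preserve any ``wedge of imaginary normalized forms'' structure and no case split on the dimension of a restricted fan. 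To repair your proof you would either have to solve the borderline case honestly or reformulate your induction along these lines.
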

\begin{proof}
Suppose that $\dim \Sigma_\omega = n.$ This means that there are irreducible divisors $D_{l_1},\dots,D_{l_n} \subset V$ 
such that $D_{l_1}\cap \dots \cap D_{l_n} \neq \emptyset$ and
the span of the vectors $v_j=(-\Res_{D_{l_j}} \omega_1,\dots,-\Res_{D_{l_j}} \omega_{m}), j=1,\dots,n$ is an $n$-dimensional space.
Hence $v_1,\dots,v_n$ are linearly independent.
Take a point $p\in D_{l_1}\cap \dots \cap D_{l_n},$ there are local coordinates $z_1,\dots, z_n$ 
in a neighborhood of $p$ such that $z_k=0$ is a 
local equation of $D_{l_k}.$ In these local coordinates the form $\omega_j$ looks like
$$\omega_j=\sum^n_{k=1} \frac{-1}{2\pi i} v_{jk} \frac{d z_k}{ z_k}+ \theta_j,$$  where $\theta_j$ is a holomorphic $1$-from 
in the neighborhood of $p$ and  $v_{jk}$ is $k$-th coordinate of the vector $v_j.$

Let $J$ be a subset of $\{1,\dots,m\}$ of cardinality $n,$ $J=\{j_1,\dots,j_n\}, j_1<\dots<j_n.$ Denote by $V_J$ the 
corresponding minor of the matrix $V=(v_{jk})_{j,k}:$
$$V_J=\det \left|
             \begin{array}{ccc}
               v_{1 j_1} & \dots & v_{1 j_n} \\
               \dots & \dots & \dots \\
               v_{n j_1} & \dots & v_{n j_n} \\
             \end{array}
           \right|.
$$
Then
\begin{equation}\label{eq.Omegaloc}
  \Omega_J= (\frac{-1}{2 \pi i})^n V_J \frac{d z_1}{z_1}\wedge \dots\wedge \frac{d z_n}{z_n}+ \Theta_J d z_1\wedge \dots\wedge  d z_n,
\end{equation}
where $\Theta_J$ is some meromorphic function such that it does not contain the term $ \frac{1}{z_1\dots z_n}$ 
in its Laurent expansion $\Theta_J=\sum_{\alpha\in \mathbb{Z}^n} C_a z^\alpha.$ Thus $\Omega_J \not\equiv0$ if $V_J \neq 0.$
By assumption $v_1,\dots,v_n$ are linear independent, whence $V_J \neq 0$ and $\Omega_J\not\equiv0.$ So, we proved the first part of the statement.

Suppose that $X$ is a K\"{a}hler manifold, let us prove the converse statement.
Any intersection of $n$ irreducible components of $V=\bigcup^s_{j=1} D_j$ is either empty or a finite number of isolated points.
Let $P(X)$ be a set of all such points, i.e.,
$p\in P(X)$ iff there is a set $J\subset \{1,\dots,s\}, |J|=n$ such that $p\in \bigcap_{j\in J} D_j.$
There are a neighborhood $U_p$ of $p\in P(X)$ and a local coordinates $z_1,\dots,z_n$ in $U_p$ such that 
$z_1\dots z_n =0$ is a local equation of $V$ on $U_p.$
We denote by $\gamma_p(X)$ the following cycle in $U_p\subset X:$
$$\gamma_p(X)=\{|z_1|=\dots=|z_n|=\varepsilon\},$$
where $\varepsilon>0$ is small.

\begin{lemma}\label{pr.reszero}
Let $X,V,P(X),$ $\gamma_p(X)$ be as above.
Let $\Omega$ be an element of $H^0(X,\Omega^n_{X}(\log V))$ 
such that 
$\int_\gamma \Omega\in \mathbb{R}$ for any $\gamma\in H_n(X\setminus V, \mathbb{R})$ 
and $\int_{\gamma_p (X)} \Omega=0$ for any $p\in P(X).$
Then $\Omega\equiv 0. $
\end{lemma}
\begin{proof}
Since $\Omega$ is a holomorphic from of  the top degree on $X\setminus V,$ the form $\Omega$ is closed.
 
Let us use the induction by $\dim X = n.$
The base of induction, $n=1$. 
Because $\Omega\in H^0(X,\Omega^1_{X}(\log V)),$ all singularities of the form $\Omega$ are first order poles.
Since $p (X)=V$ and $\int_{\gamma_p(X)} \Omega=0$ for all $p\in p (X),$ the residues of the form $\Omega$ at the points $p\in P(X)=V$ are zeros and $\Omega$ is a holomorphic form in a neighborhood of $p$.
Consequently, $\Omega$ is a holomorphic form on $X$. Using Lemma \ref{lm.form}
we get $\Omega\equiv0.$

The step of induction. 
Assume that the statement is proved for  $\dim X < n.$

Let $D_j$ be an irreducible component of the divisor $V.$ Set $V_j= \bigcup_{k\neq j}  D_k\cap D_j.$
Since the manifold $D_j$ is a submanifold of a K\"{a}hler manifold, $D_j$ is also K\"{a}hler.
Moreover, $V_j$ is a simple normal crossing divisor on $D_j$ and the set $P(D_j)$ for the divisor $V_j$ is equal to $P(X)\cap D_j.$

Let us define two maps
$$H_{n-1}(D_j\setminus V_j,\mathbb{R}) \stackrel{\delta_j}{\rightarrow} 
H_{n}( X\setminus V,\mathbb{R}),$$
and 
$$ H^{0}(X,\Omega^n_{X}(\log V) ) \stackrel{\Res_j}{\rightarrow} H^{0}(D_j,\Omega^{n-1}_{D_j}(\log V_j)).$$
Let $\gamma$ be a cycle in $D_j\setminus V_j.$ Suppose there is a fixed Riemannian metrics on $X.$ Let $\varepsilon$ be small enough and $D^2_{j,\varepsilon} \gamma$ 
be the chain consisting of all geodesic of length $\varepsilon$ starting from points of $\gamma$ and orthogonal to $D_j$, it is a $D^2-$fiber bundle over $\gamma.$
We define $\delta_j \gamma$ to be equal to $\partial D^2_{j,\varepsilon} \gamma.$

Let $U$ be a neighborhood of a point in $X$ with local coordinates $z_1,\dots, z_n$ such that $z_1\dots z_l =0$ is a local equation 
of $V$ and $z_1=0$ is a local equation of 
$D_j$. Then in the neighborhood $U$ the form $\Omega$ can be written as 
\begin{equation}\label{eq.Omegaloc2}
\Omega|_{U}=\sum_{d_1,\dots,d_n \geq -1} 
C_{d_1,\dots,d_n} z_1^{d_1}\dots z_n^{d_n} d z_1 \wedge \dots \wedge d z_n,
\end{equation}
where $C_{d_1,\dots,d_n}\in\mathbb{C}$. Then the operator $\Res_j$ is defined in the local coordinates as 
$$(\Res_j \Omega)|_{U\cap D_j} = \frac{1}{2\pi i} \sum_{d_2,\dots,d_n \geq -1} C_{-1, d_2,\dots,d_n} z_2^{d_2}\dots z_n^{d_n} d z_2 \wedge \dots \wedge d z_n.$$
One can check that $\Res_j$ is a well-defined operator that does not depend on the choice of the local coordinates. Also it is easy to check that 
the following relation holds $$\int_\gamma \Res_j \omega = \int_{\delta_j \gamma} \omega,$$
where $\omega\in  H^{0}(X,\Omega^n_{X}(\log V) )  $ and $\gamma\in H_{n-1}(D_j\setminus V_j,\mathbb{R}).$

From the definition of  $\delta_j$ it follows that 
$\gamma_p(X)$ is homologous to $\delta_j \gamma_p(D_j)$ for any $p\in P(D_j).$
Whence
$$\int_{\gamma_p(X)} \Omega =\int_{\delta_j \gamma_p(D_j)} \Omega = \int_{\gamma_p(D_j)} \Res_j \Omega =0$$
for any $p\in P(D_j).$
Also for any cycle $\gamma\in H_{n-1}( D_j\setminus V_j ,\mathbb{R}),$ we have
$$\int_\gamma \Res_j \Omega = \int_{\delta_j \gamma} \Omega \in \mathbb{R}.$$ 
Whence the form $\Res_j \Omega$ on $D_j\setminus V_j$ satisfies to the conditions of the lemma.
Thus by the inductive hypothesis we obtain  $\Res_j \Omega\equiv 0$.
Because it is true for any $D_j,$ the coefficients $C_{d_1,\dots,d_n}$ in (\ref{eq.Omegaloc2})
are equal to zero if at least one of $d_j$'s is negative. This means that $\Omega$ is a holomorphic form on $X.$
Using Lemma \ref{lm.form}
we get $\Omega\equiv0.$
\end{proof}

Suppose that $\dim \Sigma_\omega < n.$
Given a point $p\in P(X),$ consider the local representation (\ref{eq.Omegaloc}) for $\Omega_J$ in a neighborhood of $p.$
Because $\dim \Sigma_\omega < n$,  we have $V_J=0$ and, consequently, $\int_{\gamma_p(X)}\Omega_J=0.$ 
Since $\int_\gamma \omega_j\in i\mathbb{R} $ for any  
$\gamma\in H^1(X\setminus V, \mathbb{R}),$  the form $\omega_j$ is an element of $H^1(X\setminus V, i \mathbb{R})\subset H^1(X\setminus V, \mathbb{C}),$
thus we have $\Omega_J\in H^n(X\setminus V, (i)^n \mathbb{R}).$  Hence we can apply Lemma \ref{pr.reszero} to the from 
$(i)^n\Omega_J,$ whence $\Omega_J\equiv 0.$ Because this holds for any $J$ the amoeba $\mathcal{A}_\omega$ does not satisfy to the nondegeneracy condition.

\end{proof}

\begin{remark} In this article we will use the nondegeneracy condition only in the case when $m=n+1,$ 
but we believe that in the general case the nondegeneracy condition implies that  $\mathcal{A}_\omega$ behaves similarly to a classical amoeba of an algebraic 
subset of dimension $n$ in $(\mathbb{C}^*)^m.$
In the next section we will show that if $m=n+1,$ then the nondegeneracy condition is equivalent to the condition $\dim \Sigma_\omega=n$ 
(Proposition \ref{pr.nondeg2}).
\end{remark}

\section{Amoebas for m = n+1, the Ronkin function, the order map and the Newton polytope.}\label{sect.2}
In this section we will study the special case $m=n+1,$ in the classical situation it corresponds to amoebas of 
hypersurfaces in $(\mathbb{C}^*)^{m},$ which is the most studied topic in this area.

\subsection{Classical amoebas of complex hypersurfaces}
First we will recall the construction of the Ronkin function and the order map in the classical situation and basic facts related to them.
Let $F(z_1,\dots,z_{m})$ be a Laurent polynomial $$F(z_1,\dots,z_{m})=\sum_{\alpha\in A} C_\alpha z^\alpha,$$
where $\alpha$ is multi-index and  $A$ is a finite subset of $\mathbb{Z}^m,$ we assume that  $C_\alpha\neq 0$ iff  $\alpha\in A.$
The convex hull of $A$ is called the \emph{Newton polytope} of $F(z),$ we denote it by $N_F.$

We denote by $\mathcal{A}_F$ the amoeba of $X=\{F(z_1,\dots,z_{m})=0\}\subset (\mathbb{C}^*)^m.$

\begin{proposition}[\cite{GKZ}, Corollary 1.6 ] Connected components of $\mathbb{R}^m\setminus \mathcal{A}_F$ are open convex subsets.
\end{proposition}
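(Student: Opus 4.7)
The strategy is to apply Bochner's tube theorem. Setting $z_j = e^{s_j}$, write $G(s) := F(e^{s_1},\dots,e^{s_m})$, a holomorphic function on $\mathbb{C}^m$ that is $2\pi i\mathbb{Z}^m$-periodic in the imaginary directions. A point $x \in \mathbb{R}^m$ lies outside $\mathcal{A}_F$ precisely when $G(x+iy)\neq 0$ for every $y\in\mathbb{R}^m$, i.e., when the entire horizontal slice $\{x\} + i\mathbb{R}^m$ is contained in $\{G\neq 0\}$.

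First I would verify that $C$ is open: the hypersurface $\{F=0\}$ is closed in $(\mathbb{C}^*)^m$ and $\Log$ is a closed, proper map (a direct analogue of Proposition~\ref{pr.closed} for the classical setting), so $\mathcal{A}_F$ is closed in $\mathbb{R}^m$ and any connected component of its complement is open in $\mathbb{R}^m$.

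For convexity, fix a connected component $C$ and form the tube $T_C := C + i\mathbb{R}^m \subseteq \mathbb{C}^m$. By the observation above, $T_C \subseteq \{G\neq 0\}$, and $T_C$ is connected since $C$ is. Hence $1/G$ is a holomorphic function on the connected tube $T_C$. Bochner's tube theorem asserts that every holomorphic function on a tube domain $B + i\mathbb{R}^m$ with open connected base $B$ extends holomorphically to the tube $\mathrm{conv}(B) + i\mathbb{R}^m$ over the convex hull of $B$. Applying this to $1/G$, one obtains a holomorphic extension $H$ on $T_{\mathrm{conv}(C)}$; by the identity principle, $G\cdot H \equiv 1$ throughout $T_{\mathrm{conv}(C)}$, so $G$ does not vanish on $\mathrm{conv}(C)+i\mathbb{R}^m$. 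Equivalently, $\mathrm{conv}(C) \subseteq \mathbb{R}^m \setminus \mathcal{A}_F$. Since $\mathrm{conv}(C)$ is connected, contains $C$, and is contained in the complement of the amoeba, while $C$ is a connected component of that complement, we conclude $\mathrm{conv}(C)=C$, i.e.\ $C$ is convex.

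The only nontrivial input is Bochner's tube theorem itself, which is where the real work is concentrated; its standard proof proceeds via Cauchy-type integral estimates (or, equivalently, by showing that $-\log\mathrm{dist}(\,\cdot\,,\partial T_B)$ is plurisubharmonic only when the base is convex). Modulo this classical result from several complex variables, the argument above is essentially formal.
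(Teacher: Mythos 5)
Your argument is correct, but note that the paper does not actually prove this statement: it is quoted verbatim from \cite{GKZ} (Corollary 1.6). In \cite{GKZ} the proof identifies each complement component with the domain of convergence of a Laurent series expansion of $1/F$ and invokes the logarithmic convexity of such domains; your route through Bochner's tube theorem applied to $1/G$ on the tube $C+i\mathbb{R}^m$ is an essentially equivalent, equally standard packaging of the same idea, and both your openness step (properness of $\Log$ plus closedness of $\{F=0\}$, the classical analogue of Proposition~\ref{pr.closed}) and your identity-principle step ($GH\equiv 1$ on the connected tube over $\mathrm{conv}(C)$, hence $G\neq 0$ there) are sound. The more instructive comparison is with how the paper proves the \emph{generalized} analogue, Proposition~\ref{pr.conv}: there no defining polynomial exists, so neither Laurent expansions nor the reciprocal $1/G$ are available, and convexity is instead extracted from the Ronkin function $R_\omega$ --- a convex function that is affine on a connected open set precisely when that set misses the amoeba (Theorem~\ref{th.ronkin}) --- by observing that the nonnegative convex function $R_\omega(x)-\langle\nabla R_\omega(x_0),x-x_0\rangle-R_\omega(x_0)$ vanishes on $C$ and hence on $\mathrm{conv}(C)$. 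Your approach buys a short, classical several-complex-variables proof, but it is tied to the existence of $F$; the Ronkin-function argument is the one that survives in the generalized setting that is the subject of the paper.
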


By definition, the \emph{Ronkin function} $R_F(x)$ of $F(z)$ is given by the formula 
\begin{equation}\label{eq.Rfun}R_F(x)=\frac{1}{(2 \pi i)^m}\int_{\mathrm{Log}^{-1}(x)} \log|F(z)| \frac{d z_1}{ z_1} 
\wedge \dots \wedge \frac{d z_m}{ z_m},\end{equation}
here $$\mathrm{Log}^{-1}(x)=\{z\in (\mathbb{C}^*)^m: |z_j|=e^{x_j}\}$$ and the orientation of $\mathrm{Log}^{-1}(x)$ 
is defined by positivity of the form $\frac{1}{(2 \pi i)^m} \frac{d z_1}{ z_1} 
\wedge \dots \wedge \frac{d z_m}{ z_m}.$   Notice that this function is not necessarily smooth.

\begin{theorem}[\cite{PR}] The Ronkin function $R_F(x)$ is a convex function. If $C$ is a connected open set,
then the restriction of $R_F(x)$ to $C$ is affine linear if and only if $C$ does not intersect the amoeba $\mathcal{A}_F$.
\end{theorem}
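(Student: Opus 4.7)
The plan is to read both halves of the statement off the plurisubharmonicity of $\log|F|$, combined with the representation
$$R_F(x)=\frac{1}{(2\pi)^m}\int_{[0,2\pi)^m}\log|F(e^{x_1+i\theta_1},\dots,e^{x_m+i\theta_m})|\,d\theta_1\cdots d\theta_m$$
obtained by parametrising $\Log^{-1}(x)$ by $\theta$. To prove convexity along a line $x=x_0+t\xi$, I would introduce the holomorphic map $\varphi_\theta(s)=(e^{x_{0,j}+s\xi_j+i\theta_j})_j$ from $\mathbb{C}$ to $(\mathbb{C}^*)^m$. Then $s\mapsto\log|F(\varphi_\theta(s))|$ is subharmonic, and its $\theta$-average depends only on $\RRe s$ by translation invariance of Haar measure on $(S^1)^m$. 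A subharmonic function of one complex variable depending only on $\RRe s$ is automatically convex in $\RRe s$, so $t\mapsto R_F(x_0+t\xi)$ is convex; hence $R_F$ is convex on all of $\mathbb{R}^m$.

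Next I would show that $R_F$ is affine on every component $C$ of $\mathbb{R}^m\setminus\mathcal{A}_F$. Differentiating under the integral and using $\partial_{x_j}\log|F|=\RRe(z_jF_{z_j}/F)$ yields
$$\frac{\partial R_F}{\partial x_j}(x)=\RRe\Bigl(\frac{1}{(2\pi i)^m}\int_{\Log^{-1}(x)}\frac{F_{z_j}}{F}\,dz_j\wedge\bigwedge_{k\neq j}\frac{dz_k}{z_k}\Bigr).$$
For $x\in C$ the torus $\Log^{-1}(x)$ lies in $(\mathbb{C}^*)^m\setminus\{F=0\}$, where the integrand is a holomorphic $m$-form and hence automatically $d$-closed. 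As $x$ varies in $C$ the tori $\Log^{-1}(x)$ sweep out a smooth homotopy through that complement, so Stokes forces the integral to be constant on $C$. Thus $\nabla R_F$ is constant on $C$ and $R_F|_C$ is affine.

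For the converse direction, suppose $R_F$ is affine on a connected open $U\subset\mathbb{R}^m$; I have to deduce $U\cap\mathcal{A}_F=\emptyset$. The pullback $\widetilde R_F:=R_F\circ\Log$ is then pluri-harmonic on $\Log^{-1}(U)$, since an affine function $\sum a_jx_j+c$ pulled back by $\Log$ is locally the real part of $\sum a_j\log z_j+c$. On the other hand, Fubini gives the Haar average
$$\widetilde R_F(z)=\frac{1}{(2\pi)^m}\int_{(S^1)^m}\log|F(w_1z_1,\dots,w_mz_m)|\,dw_1\cdots dw_m,$$
so by Poincar\'e--Lelong, $\frac{i}{\pi}\partial\overline{\partial}\widetilde R_F$ equals the $(S^1)^m$-average of the positive currents $(T_w)^*[F=0]$, itself a positive current whose total support is $\Log^{-1}(\mathcal{A}_F)$. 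Since $\Log^{-1}(U)$ is torus-invariant, vanishing of this average there rules out cancellations and forces $[F=0]\cap\Log^{-1}(U)=\emptyset$, i.e.\ $U\cap\mathcal{A}_F=\emptyset$.

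The main obstacle will be making this last step rigorous: one has to differentiate the defining integral of $R_F$ twice in the sense of currents, identify $\partial\overline{\partial}\widetilde R_F$ with the torus-averaged current $\int_{(S^1)^m}(T_w)^*[F=0]\,dw$ via Poincar\'e--Lelong, and exploit torus-invariance of $\Log^{-1}(U)$ together with positivity of $[F=0]$ to preclude any cancellation. By comparison, convexity and affineness on the components follow directly from subharmonicity and Stokes' theorem.
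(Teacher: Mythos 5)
The paper does not prove this statement: it is imported verbatim from \cite{PR}, so there is no in-paper argument to compare against; the closest internal analogue is Theorem \ref{th.ronkin}, where the corresponding claim is established for generalized amoebas via tropical supercurrents. Your outline is essentially the classical proof and is sound. Convexity by averaging the subharmonic functions $s\mapsto\log|F(\varphi_\theta(s))|$ over $\theta$ and invoking translation-invariance of Haar measure is Ronkin's original argument; affineness on a component $C$ by differentiating under the integral and applying Stokes to the closed holomorphic $m$-form $\frac{z_jF_{z_j}}{F}\frac{dz_1}{z_1}\wedge\dots\wedge\frac{dz_m}{z_m}$ over the homologous tori $\Log^{-1}(x)$, $x\in C$, is exactly the mechanism of \cite{FPT}. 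For the converse you correctly isolate the crux: $\frac{i}{\pi}\partial\overline{\partial}(R_F\circ\Log)$ is the $(S^1)^m$-average of the positive currents $(T_w)^*[F=0]$, and since $\Log^{-1}(U)$ is torus-invariant, vanishing of the average there is tested against a nonnegative $(m-1,m-1)$-form concentrated near a putative zero of $F$; positivity plus continuity of $w\mapsto\langle (T_w)^*[F=0],\phi\rangle$ excludes cancellation and forces $\{F=0\}\cap\Log^{-1}(U)=\emptyset$. This is the complex-analytic twin of the paper's generalized argument, where positivity of $\frac{1}{(2\pi i)^n}\Omega_j\wedge\overline{\Omega}_j$ yields $\frac{\partial^2}{\partial x_j^2}R_\omega[\phi\,dx]>0$ whenever $\supp\phi$ meets the amoeba; your route buys the same conclusion without the supercurrent formalism, at the cost of justifying the interchange of $\partial\overline{\partial}$ with the average over $w$ (local $L^1$ convergence of the averaged potentials suffices) and of making precise that an affine function of $\Log(z)$ is pluriharmonic. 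Both points are standard, so I see no genuine gap, only routine details to fill in.
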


\begin{proposition}[\cite{PR}] The restriction of the gradient vector $\nabla R_F(x)$ to a connected component $C$
of $\mathbb{R}^m\setminus \mathcal{A}_F$ is an integer valued constant function, i.e., $\nabla R_F(x)\in \mathbb{Z}^m$ if $x\in C$. 
It is equal to 
$$(\nabla R_F)_j(x)=\int_{\Log^{-1}(x)} \frac{1}{(2 \pi i)^m} \frac{z_j 
\frac{\partial F(z)}{\partial z_j }}{F(z)} \frac{d z_1}{ z_1} \wedge \dots \wedge \frac{d z_m}{ z_m},$$ 
where $x\in C $ and $(\nabla R_F)_j$ is the $j$'s coordinate of $\nabla R_F.$
\end{proposition}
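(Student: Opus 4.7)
The plan is to differentiate the defining integral for $R_F$ under the integral sign and then identify the result via the argument principle. For $x \in C$, the torus $\Log^{-1}(x)$ is disjoint from the zero set of $F$ (by definition of the amoeba) and from the coordinate hyperplanes; hence the integrand in (\ref{eq.Rfun}) and its $x$-derivatives depend smoothly on $x$ in a neighborhood of any such point, justifying interchange of $\partial/\partial x_j$ with the integral. Parametrizing $\Log^{-1}(x)$ by $z_k = e^{x_k + i\theta_k}$, one has $\partial_{x_k} z_k = z_k$ along the torus and $dz_k/z_k = i\,d\theta_k$.

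The identity $\log|F|^2 = \log(F\bar F)$ yields
\[\frac{\partial}{\partial x_j}\log|F(e^{x+i\theta})| = \RRe \frac{z_j\,\partial F/\partial z_j}{F(z)},\]
and, rewriting the resulting real integral in complex form through $dz_k/z_k = i\,d\theta_k$, one obtains
\[\frac{\partial R_F}{\partial x_j}(x) = \RRe \int_{\Log^{-1}(x)} \frac{1}{(2\pi i)^m}\,\frac{z_j\,\partial F/\partial z_j}{F(z)}\,\frac{dz_1}{z_1}\wedge\dots\wedge\frac{dz_m}{z_m}.\]
It remains to show that this complex integral is in fact a real integer, so that the real-part operation is automatic and the stated formula holds.

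For this I would apply the argument principle iteratively. Writing $(z_j\,\partial F/\partial z_j / F)\,dz_j/z_j = d(\log F)$ as a form in $z_j$, I decompose the integral as
\[\frac{1}{(2\pi i)^{m-1}} \int_{|z_k|=e^{x_k},\,k\neq j}\left(\frac{1}{2\pi i}\int_{|z_j|=e^{x_j}} \frac{\partial F/\partial z_j}{F(z)}\,dz_j\right)\bigwedge_{k\neq j}\frac{dz_k}{z_k}.\]
For each fixed value of $(z_k)_{k\neq j}$ on their circles, the inner contour integral computes, by the argument principle, the number of zeros of $z_j \mapsto F(z)$ inside $|z_j| < e^{x_j}$ minus the multiplicity of the pole at $z_j=0$ (the only possible pole of the Laurent polynomial $F$). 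Since $F$ has no zeros on $\Log^{-1}(x)$, no zero touches the contour, so this integer-valued function is locally constant in the remaining angular variables and, by connectedness of the $(m-1)$-torus, is a constant integer $n_j(x) \in \mathbb{Z}$. Integrating this constant against the normalized Haar form on the torus returns $n_j(x)$ itself, which proves both the explicit formula and the integrality $(\nabla R_F)_j(x) \in \mathbb{Z}$.

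Constancy on $C$ is automatic from the preceding theorem, which states that $R_F|_C$ is affine; alternatively, $x \mapsto n_j(x)$ is continuous and integer-valued on the connected set $C$, hence constant there. The main point requiring care is the iterated use of the argument principle, specifically the step that the inner winding number is independent of the choice of the other angular parameters, which is precisely what the zero-free hypothesis on $\Log^{-1}(x)$ guarantees.
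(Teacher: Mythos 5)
The paper does not prove this proposition at all: it is quoted as background from \cite{PR} (see also \cite{FPT}), so there is no internal proof to compare against. Your argument is the standard one from those references --- differentiate (\ref{eq.Rfun}) under the integral sign using $\partial_{x_j}\log|F|=\RRe\bigl(z_jF_{z_j}/F\bigr)$, then apply Fubini and the one-variable argument principle in $z_j$, noting that the resulting zero-minus-pole count is an integer-valued continuous, hence constant, function of the remaining angular variables --- and it is correct, including the justification of the interchange (for $x\in C$ the function $F$ is bounded away from zero on $\Log^{-1}(U)$ for a neighborhood $U\subset C$) and the observation that integrality of the inner integral makes the $\RRe$ superfluous, so the stated formula holds without a real-part sign.
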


Let $\Upsilon$  be a set of connected components of $\mathbb{R}^m \setminus \mathcal{A}_F.$
Let us define th \emph{order map}  $\nu_F: \Upsilon \rightarrow \mathbb{Z}^m$ as 
$$\nu_F(C)=\nabla R_F(x),$$ where $C\in \Upsilon$  and $x$ is any point from $C.$

\begin{definition} The \emph{recession cone} of a convex set $S\subset \mathbb{R}^m$ equals $$\mathrm{recc}(S)=\{v\in \mathbb{R}^m: 
\forall x\in S: x+v\in S\}.$$
\end{definition}
\begin{definition} The \emph{normal cone} to a convex set $S\subset \mathbb{R}^m$ at a point $x\in S$ 
is the cone of outer-pointing normal to the set $S$ at the point $x,$ i.e.,
$$\mathrm{norm}_S(x)=\{v\in \mathbb{R}^m: \forall y \in S: \langle y-x, v\rangle \leq 0\}.$$ 
\end{definition}

\begin{theorem}[\cite{FPT}, Theorem 2.8 ] The order map $$\nu_F: \Upsilon \rightarrow \mathbb{Z}^m$$ 
is injective, its image is contained in the Newton polytope $N_F.$  All vertices  of the Newton polytope  belongs to the image of the order map.
Therefore the number of connected components of $\mathbb{R}^m\setminus \mathcal{A}_F$ is at least equal to the number of vertices of the Newton polytope  
$N_F$ and at most equal to the total number of integer points in $N_F\cap \mathbb{Z}^m.$
\end{theorem}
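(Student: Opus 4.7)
The plan is to treat the three substantive assertions separately: injectivity of $\nu_F$, the containment $\nu_F(\Upsilon) \subset N_F$, and the fact that every vertex of $N_F$ lies in $\nu_F(\Upsilon)$; the integer-valuedness of $\nu_F$ is already supplied by the preceding proposition.

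For injectivity, I would argue purely from convexity of $R_F$. Suppose distinct components $C_1, C_2 \in \Upsilon$ satisfy $\nu_F(C_1) = \nu_F(C_2) = \alpha$. On each $C_i$ we have $R_F(x) = \langle \alpha, x\rangle + c_i$, and since $\nabla R_F = \alpha$ at every point of the open set $C_i$, every point of $C_i$ is a critical point of the convex function $h(x) := R_F(x) - \langle \alpha, x\rangle$, hence a global minimizer. This forces $c_1 = c_2 =: c$ and places both components in the convex set $M := \{x : h(x) = c\}$. Its interior $\mathrm{int}(M)$ is convex, hence connected, and contains the open sets $C_i$; moreover $R_F$ is affine on $\mathrm{int}(M)$, so by the cited Ronkin-function theorem we have $\mathrm{int}(M) \cap \mathcal{A}_F = \emptyset$. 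Therefore $\mathrm{int}(M)$ is a single connected piece of $\mathbb{R}^m \setminus \mathcal{A}_F$ containing both $C_1$ and $C_2$, forcing $C_1 = C_2$.

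For the containment $\nu_F(\Upsilon) \subset N_F$, I would use a tropical growth estimate. From $|F(z)| \le \sum_{\alpha \in A}|C_\alpha|\, e^{\langle \alpha, x\rangle}$ on the torus $\Log^{-1}(x)$ one deduces $R_F(x) \le \max_{\alpha \in A}\langle \alpha, x\rangle + O(1)$ uniformly on $\mathbb{R}^m$. On a component $C$ with order $\alpha_0$ the left side equals $\langle \alpha_0, x\rangle + c$, and letting $x$ tend to infinity in an arbitrary direction $v$ yields $\langle \alpha_0, v\rangle \le \max_{\alpha \in A}\langle \alpha, v\rangle$ for every $v$, i.e., $\alpha_0 \in N_F$ by the support-function characterization of a convex polytope. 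Combined with integrality this gives $\alpha_0 \in N_F \cap \mathbb{Z}^m$.

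For vertex surjectivity, fix a vertex $\alpha$ of $N_F$ and a direction $v$ with $\langle \alpha, v\rangle > \langle \beta, v\rangle$ for every $\beta \in A \setminus \{\alpha\}$. For $t$ sufficiently large and $x = tv$, the monomial $C_\alpha z^\alpha$ strictly dominates all others on $\Log^{-1}(tv)$, so $F$ does not vanish there and $tv$ lies in some component $C^\ast \in \Upsilon$. Writing $F(z) = C_\alpha z^\alpha(1 + \epsilon(z))$ with $|\epsilon| < 1$ on the torus, one expands $z_j \partial F/\partial z_j \cdot F^{-1}$ via the geometric series, and a direct residue computation in the integral formula for $(\nabla R_F)_j$ yields $\nu_F(C^\ast) = \alpha$. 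The main obstacle is the injectivity step: one must rule out any conspiracy of two components sharing an order, and the convexity-based argument above handles this by combining the fact that a convex function with vanishing gradient on an open set attains its minimum there with the cited characterization that $R_F$ is affine on a region precisely when the region avoids $\mathcal{A}_F$.
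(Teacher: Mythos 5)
First, a point of orientation: the paper does not prove this statement at all --- it is quoted from [FPT] as background, and what the paper actually establishes is the generalized analog (injectivity in Theorem~\ref{th.rec}, together with Proposition~\ref{pr.grad}), where the Newton polytope is \emph{defined} as the convex hull of the image of the order map, so only injectivity and the recession-cone description carry real content there. Measured against that, your injectivity argument is essentially the same as the paper's: two components with equal order force the normalized Ronkin function $h=R_F-\langle\alpha,\cdot\rangle-c$ to vanish on an open connected convex set containing both (you use $\mathrm{int}\{h=c\}$, the paper uses $\mathrm{conv}(C\cup C')$), and the ``affine iff disjoint from $\mathcal{A}_F$'' characterization then puts both components inside a single component of the complement. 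Your vertex step is the standard [FPT] computation and is sound; the only point worth making explicit is that the geometric-series expansion of $z_j\partial_jF/F$ contributes no constant term precisely because a vertex $\alpha$ of $N_F$ cannot be written as an average of other points of $A$ --- equivalently, every product of monomials $z^{\beta_i-\alpha}$ with $\beta_i\neq\alpha$ pairs strictly negatively with your separating direction $v$.

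The one genuine gap is in the containment $\nu_F(\Upsilon)\subset N_F$. You assert that on a component $C$ with order $\alpha_0$ one has $R_F(x)=\langle\alpha_0,x\rangle+c$ and then ``let $x$ tend to infinity in an arbitrary direction $v$.'' That identity holds only for $x\in C$, and $C$ may be bounded, or unbounded only along its recession cone, so you cannot send $x=tv\to\infty$ inside $C$ for an \emph{arbitrary} $v$; as written the step fails for every bounded component. The repair is one line: since $\alpha_0=\nabla R_F(x_0)$ at a point $x_0\in C$ where $R_F$ is differentiable, convexity gives the \emph{global} lower bound $R_F(y)\geq\langle\alpha_0,y-x_0\rangle+R_F(x_0)=\langle\alpha_0,y\rangle+c$ for all $y\in\mathbb{R}^m$, and comparing this with your global upper bound $R_F(y)\leq\max_{\alpha\in A}\langle\alpha,y\rangle+O(1)$ along $y=tv$, $t\to+\infty$, yields $\langle\alpha_0,v\rangle\leq\max_{\alpha\in A}\langle\alpha,v\rangle$ for every $v$, hence $\alpha_0\in N_F$. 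This is exactly the mechanism the paper uses in Proposition~\ref{pr.grad} (following [PR]): the image of the gradient map consists of those $\xi$ for which $R_F-\langle\xi,\cdot\rangle$ attains a global minimum, in particular is bounded below, and one identifies the set of such $\xi$ with the Newton polytope. With that correction your proof is complete; the concluding count of components follows formally from injectivity, integrality, containment in $N_F\cap\mathbb{Z}^m$, and the vertex statement.
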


\begin{proposition}[\cite{FPT}]
Let $C$ be a connected component of $\mathbb{R}^m\setminus \mathcal{A}_F.$ 
Then the recession cone of $C$ is equal to the normal cone to the Newton polytope $N_F$ at the point $\nu_F(C).$
\end{proposition}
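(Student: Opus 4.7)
The plan is to combine the convexity of $R_F$ with an identification of its recession function. Let me write $h_{N_F}(v) = \max_{w \in N_F} \langle w, v \rangle$ for the support function of the Newton polytope, and $R_F^\infty(v) = \lim_{t \to +\infty} (R_F(x + tv) - R_F(x))/t$ for the recession function of $R_F$; the latter exists and is independent of $x$ since $R_F$ is convex. By definition of the normal cone and since $\nu_F(C) \in N_F$, membership $v \in \mathrm{norm}_{N_F}(\nu_F(C))$ is equivalent to the single equality $\langle \nu_F(C), v \rangle = h_{N_F}(v)$.

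First I would give a functional characterization of the recession cone of $C$: because $C$ is the maximal open set on which $R_F$ is affine with gradient $\nu_F(C)$ (affinity characterizes $\mathbb{R}^m \setminus \mathcal{A}_F$ by the convexity theorem of Passare--Rullg\aa rd, and components are separated by the injective order map $\nu_F$), the condition $v \in \mathrm{rec}(C)$ is equivalent, for any fixed $x \in C$, to
\[
R_F(x + tv) = R_F(x) + t \langle \nu_F(C), v \rangle \quad \text{for all } t \geq 0.
\]
Next I would bracket this quantity between two elementary bounds. The subgradient inequality for the convex function $R_F$, applied at $x \in C$ where the gradient is $\nu_F(C)$, gives $R_F(x+tv) \geq R_F(x) + t \langle \nu_F(C), v\rangle$. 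For the upper bound one uses the standard fact that $t \mapsto (R_F(x+tv) - R_F(x))/t$ is nondecreasing and has limit $R_F^\infty(v)$, hence $R_F(x+tv) \leq R_F(x) + t R_F^\infty(v)$. Equality throughout the ray (the above affinity statement) is therefore equivalent to $\langle \nu_F(C), v \rangle = R_F^\infty(v)$.

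The crux is then the identity $R_F^\infty(v) = h_{N_F}(v)$. The upper bound $R_F^\infty(v) \leq h_{N_F}(v)$ is routine: the pointwise estimate $\log|F(z)| \leq \log|A| + \max_{\alpha \in A} (\log|C_\alpha| + \langle \alpha, \Log z\rangle)$ on $\Log^{-1}(x+tv)$ integrates to $R_F(x+tv) \leq \max_\alpha \langle \alpha, x+tv\rangle + O(1) = t h_{N_F}(v) + O(1)$. The matching lower bound $R_F^\infty(v) \geq h_{N_F}(v)$ is the main obstacle: one must show that far out in direction $v$, the slope of $R_F$ actually reaches the maximum of $\langle \alpha, v \rangle$ over $\alpha \in A$. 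This uses the structure theorem for the complement of the amoeba at infinity (the image of $\nu_F$ contains every vertex of $N_F$, and the asymptotic fan is the $n$-skeleton of the normal fan of $N_F$): by picking a vertex $\alpha^* \in N_F$ maximizing $\langle \alpha^*, v\rangle = h_{N_F}(v)$ and a component $C^* \in \Upsilon$ with $\nu_F(C^*) = \alpha^*$ whose recession cone contains the corresponding full-dimensional dual cone, any ray in direction $v$ eventually enters $C^*$, where $R_F$ is affine with slope $\alpha^*$, forcing $R_F^\infty(v) \geq h_{N_F}(v)$.

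With $R_F^\infty(v) = h_{N_F}(v)$ established, the chain of equivalences closes:
\[
v \in \mathrm{rec}(C) \;\Longleftrightarrow\; \langle \nu_F(C), v \rangle = h_{N_F}(v) \;\Longleftrightarrow\; v \in \mathrm{norm}_{N_F}(\nu_F(C)),
\]
finishing the proof. The only nontrivial ingredient is the identification of the recession function of $R_F$ with the support function of $N_F$; once one has this, the rest is a direct convex-analysis manipulation.
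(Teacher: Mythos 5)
Your overall architecture is sound and genuinely different in packaging from the argument the paper gives for the generalized analogue (Theorem \ref{th.rec}). You reduce everything to the single identity $R_F^\infty = h_{N_F}$ between the recession function of the Ronkin function and the support function of the Newton polytope, and then close both inclusions at once by a chain of equivalences. The paper instead never names the recession function: it proves the inclusion $\mathrm{recc}(C)\subseteq\mathrm{norm}_{N}(\nu(C))$ by combining Lemma \ref{lm.2} (for every direction $v$ there is a full-dimensional component $C'$ with $v\in\mathrm{recc}(C')$ and $v\in\mathrm{norm}_N(\nu(C'))$, extracted from the asymptotic fan plus monotonicity of directional derivatives of the convex function $R$) with Lemma \ref{lm.1} (two components sharing a recession direction have equal directional derivatives), and the reverse inclusion by the monotonicity of $\nabla_v R$ along rays together with $\im\nabla R\subseteq N$ (Proposition \ref{pr.grad}) and the ``affine iff disjoint from the amoeba'' theorem. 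Your route buys a cleaner conceptual statement, and in the classical setting it can exploit the [FPT] theorem that every vertex of $N_F$ lies in $\im\nu_F$ --- an ingredient that is unavailable in the generalized setting, which is precisely why the paper has to manufacture Lemma \ref{lm.2} from the asymptotic fan instead.

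Two steps need repair. First, in your equivalence ``$v\in\mathrm{recc}(C)$ iff $R_F$ is affine with slope $\langle\nu_F(C),v\rangle$ along the ray,'' the backward direction is not immediate: affinity along a one-dimensional ray does not by itself place the ray in $C$. You must pass to the open connected set $\widetilde C=C+\mathbb{R}_{\geq0}v$ (on which $R_F$ coincides with the affine function, by running your bracketing at every $x\in C$), apply the Passare--Rullg\aa rd characterization to conclude $\widetilde C\cap\mathcal{A}_F=\emptyset$, and hence $\widetilde C=C$; this is exactly the paper's closing move. Second, and more seriously, your lower bound $R_F^\infty(v)\geq h_{N_F}(v)$ is argued by asserting that the component whose recession cone contains the full-dimensional normal cone at a vertex $\alpha^*$ has order exactly $\alpha^*$ --- which is a special case of the statement being proved and is not established in your proposal --- and the claim that ``any ray in direction $v$ eventually enters $C^*$'' fails when $v$ lies on the boundary of a maximal cone of the normal fan. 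Both defects disappear if you instead use the subgradient inequality at a point $x'\in C'$ of an arbitrary component: $R_F(x+tv)\geq R_F(x')+\langle\nu_F(C'),x+tv-x'\rangle$ gives $R_F^\infty(v)\geq\langle\nu_F(C'),v\rangle$ for every $C'\in\Upsilon$, and since every vertex of $N_F$ is in $\im\nu_F$ this yields $R_F^\infty(v)\geq h_{N_F}(v)$ directly, with no asymptotic-fan input and no restriction on $v$.
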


The main aim of this section is to prove generalized amoeba analogs of the statements that are written above. 
Because there is no explicit equation of $X$ we need to find new 
definitions of the Ronkin function and the Newton polytope.

\subsection{Tropical superforms and tropical supercurrents}\label{s.supercurr} In this subsection we will develop machinery of tropical superforms. 
Tropical superforms is a new object in tropical geometry, which we believe will play crucial role in the future development of tropical geometry. For general introduction and applications of superform see
\cite{Lag}, \cite{JSS}. We will use superforms to construct the Ronkin function of the generalized amoeba.

One can consider $\mathbb{R}^m$ as a tropical analog of the complex torus $(\mathbb{C}^*)^m$.
Let $\Lambda^p T^* \mathbb{R}^m$ be the $p$-th exterior power of the cotangent bundle of $\mathbb{R}^m.$
\begin{definition}The \emph{space of tropical superforms of bidegree $(p,q)$ on $\mathbb{R}^m$} is the space of smooth sections of 
the vector bundle $\Lambda^p T^* \mathbb{R}^m \otimes \Lambda^q T^* \mathbb{R}^m.$ We denote it by $\mathcal{E}^{p,q}(\mathbb{R}^m).$
\end{definition}
Obviously, the space $\mathcal{E}^{p,q}(\mathbb{R}^m)$ is isomorphic to $$C^{\infty}(\mathbb{R}^m) \otimes \Lambda^p \mathbb{R}^m \otimes \Lambda^q \mathbb{R}^m.$$
Let $K=\{k_1,\dots,k_q\}$ be a subsets of $\{1,\dots,m\}$ of cardinality $|K|=q.$ 
We use the convention that elements of $K$ are naturally ordered: $k_1<\dots<k_q.$ Set  $d x_K=d x_{k_1}\wedge \dots \wedge d x_{k_q}.$
In terms of coordinates a superform $\omega\in \mathcal{E}^{p,q}(\mathbb{R}^m)$ can be written as
$$\omega = \sum_{\substack{|J|=p\\|K|=q}} f_{JK}(x) d x_J\otimes d x_K,$$
where the sum is taken over all subset of cardinality $p$ and $q,$ and $f_{JK}(x)\in C^{\infty}(\mathbb{R}^m).$
We should notice that $d x_J\otimes d x_K\neq 0$ even if $J\cap K \neq \emptyset.$
Let $\mathcal{E}_c^{p,q}(\mathbb{R}^m)\subset \mathcal{E}^{p,q}(\mathbb{R}^m)$ denote the subspace of tropical superform with compact support, i.e.,
the superform
$\omega = \sum f_{JK}(x) d x_J\otimes d x_K$ belongs to $\mathcal{E}_c^{p,q}(\mathbb{R}^m)$ if every $f_{JK}(x)$ has compact support.

There are two differentials
$d':\mathcal{E}^{p,q}(\mathbb{R}^m)  \rightarrow \mathcal{E}^{p+1,q}(\mathbb{R}^m),$
$d'':\mathcal{E}^{p,q}(\mathbb{R}^m)  \rightarrow \mathcal{E}^{p,q+1}(\mathbb{R}^m):$
$$d' \omega = \sum_{|I|=p, |J|=1} \sum^m_{k=1} \frac{\partial}{\partial x_k} f_{IJ}(x) d x_k\wedge d x_I \otimes d x_J$$
$$d'' \omega = (-1)^p \sum_{|I|=p, |J|=1} \sum^m_{k=1} \frac{\partial}{\partial x_k} f_{IJ}(x)  d x_I \otimes  d x_k\wedge d x_J.$$
It is easy to check that $d' d' =0,$ $d'' d''=0$ and $d' d'' = -d'' d'.$

\begin{remark}
Tropical superforms of bidegree $(p,q)$ is a tropical analog of differential forms on a complex manifold of bidegree $(p,q)$. 
The operators $d'$ and $d''$ play the role of tropical versions of the operators $\partial$ and $\overline{\partial}.$
\end{remark}

Let us fix a volume form with a constant coefficient $$\mu = c dx_1\wedge\dots \wedge d x_m = c dx, c>0,$$
then we can write an $(m,m)$-tropical superform $\omega$ as
$$\omega= f(x) \mu \otimes \mu= f c^2 dx \otimes d x.$$
The tropical integral of the superform $f(x) \mu \otimes \mu$ over $\mathbb{R}^m$ is defined to be
equal to $$\int_{\mathbb{R}^m} f(x) \mu \otimes \mu:= (-1)^{\frac{m(m-1)}{2}}\int_{\mathbb{R}^m} f(x) \mu,$$
where the right hand side is a usual integral of an $m$-form over $\mathbb{R}^m.$
This integral depends on the choice of the volume form $\mu,$ we will use the standard volume form $\mu=d x.$

Suppose $\varphi\in \mathcal{E}_c^{m,m-1}(\mathbb{R}^m),$ then $$ \int_{\mathbb{R}^m} d'' \varphi =0.$$
The same fact is true for $\varphi\in \mathcal{E}_c^{m-1,m}(\mathbb{R}^m)$ and the operator $d'.$

There is an exterior product operation 
$$\wedge:\mathcal{E}^{p,q}(\mathbb{R}^m)\times \mathcal{E}^{p',q'}(\mathbb{R}^m)\rightarrow \mathcal{E}^{p+p',q+q'}(\mathbb{R}^m).$$
It works as follows in terms of a basis. Let us denote $J,K,J',K'\subset\{1,\dots,m\},$ $|J|=p,|K|=q,|J'|=p',|K'|=q'$ 
then 
$$ x_J \otimes x_K \wedge x_{J'} \otimes x_{K'} = (-1)^{q p'} x_J \wedge x_{J'} \otimes x_K  \wedge  x_{K'}.$$
For $\omega\in \mathcal{E}^{p,q}, \varphi\in \mathcal{E}^{p',q'},$ we have
$$d'' ( \omega \wedge \varphi)=  d'' \omega \wedge \varphi +(-1)^{p+q} \omega \wedge d'' \varphi,$$
the same relation holds for the operator $d'.$
Let $\varphi\in \mathcal{E}^{q,p}(\mathbb{R}^m)$ and $ \psi\in\mathcal{E}_{c}^{m-q,m-p-1}(\mathbb{R}^m),$
then $$\int_{\mathbb{R}^m} d'' ( \varphi \wedge\psi ) = 0.$$
Therefore $$\int_{\mathbb{R}^m} d'' \varphi \wedge\psi = (-1)^{p+q+1}\int_{\mathbb{R}^m}  \varphi \wedge d'' \psi.$$
The same fact is true for the operator $d'.$

Also we define an analog of the complex conjugation 
$$I:\mathcal{E}^{p,q}(\mathbb{R}^m)\rightarrow \mathcal{E}^{q,p}(\mathbb{R}^m),$$
$$I(f(x) x_J \otimes x_K)= (-1)^{p q} f(x) x_K \otimes x_J.$$

Let $\omega= \sum \omega_{JK}(x) d x_J\otimes d x_K$ be a $(p,q)$-superform. To every compact subset $C\subset \mathbb{R}^m$ and every integer
$s \in \mathbb{Z}_{\geq 0}$, we associate a seminorm $$p^s_C(\omega)=\sup_{x\in C}\max_{\substack{|J|=p,|K|=q\\|\alpha|\leq s}}|D^\alpha \omega_{JK}(x)|,$$
where $\alpha=(\alpha_1,\dots,\alpha_m)$ runs over $\mathbb{Z}^m_{\geq 0}$ and $D^\alpha = \frac{\partial^{|\alpha|}}{\partial x_1^{\alpha_1}\dots \partial x_m^{\alpha_m}}$ is a derivation of
order $|\alpha|=\alpha_1+\dots+\alpha_m.$ We equip the space $\mathcal{E}^{p,q}(\mathbb{R}^m)$ with the topology defined by all seminorms $p^s_C$ when $s, C$ vary. The subspace
$\mathcal{E}_c^{p,q}(\mathbb{R}^m)$ is equipped with induced topology.

Let $C$ be a compact set in $\mathbb{R}^m,$ we denote by $\mathcal{E}_c^{p,q}(C)$  the subspace of elements $\omega\in \mathcal{E}^{p,q}(\mathbb{R}^m)$
with support contained in $C.$  
\begin{definition}
The space of supercurrents of bidegree $(p,q)$ on $\mathbb{R}^m$  is the space of linear forms $T$ on the space $\mathcal{E}_c^{m-p,m-q}(\mathbb{R}^m)$ 
such that the restriction of $T$ to all subspaces $\mathcal{E}_c^{m-p,m-q}(C),$  $C\Subset \mathbb{R}^m,$ is continuous.
We denote this space by $\mathcal{D}^{p,q}(\mathbb{R}^m).$ 
\end{definition}

We denote the value of a current $\varphi\in\mathcal{D}^{p,q}(\mathbb{R}^m)$ on a superfrom $\psi\in \mathcal{E}_c^{m-p,m-q}(\mathbb{R}^m)$
by $\varphi[\psi]$. We understand this formally as
$$\varphi[\psi]= \int_{\mathbb{R}^m} \varphi\wedge \psi.$$
In particular, any superform $\varphi\in \mathcal{E}^{p,q}(\mathbb{R}^m)$ defines the supercurrent
$$\varphi[\psi]:=\int_{\mathbb{R}^m} \varphi\wedge \psi.$$
 
Let $\varphi\in \mathcal{D}^{p,q}(\mathbb{R}^m),$ then the currents $d' \varphi\in \mathcal{D}^{p+1,q}(\mathbb{R}^m)$ and $d''\varphi\in \mathcal{D}^{p,q+1}(\mathbb{R}^m)$ are defined
to be equal
$$(d' \varphi)[ \psi]=(-1)^{p+q+1}\varphi[d' \psi],$$
$$(d'' \varphi)[ \psi]=(-1)^{p+q+1}\varphi[d'' \psi].$$

Any $(p,q)$-supercurrent $\varphi$ can be written as
$$\varphi=\sum_{|J|=p,|K|=q} \varphi_{JK} d x_J\otimes d x_K,$$
where $\varphi_{JK}$ are $(0,0)$-supercurrents and 
$$\varphi[\psi]=\sum_{|J|=p,|K|=q}\varphi_{JK}[(d x_J\otimes d x_K)\wedge \psi].$$

For a fixed volume form $\mu=dx$ there is an isomorphisms $\tau$ between $\mathcal{D}^{0,0}(\mathbb{R}^m)$ and $\mathcal{D}^{0}(\mathbb{R}^m),$ where $\mathcal{D}^{0}(\mathbb{R}^m)$ 
is the space of usual currents of degree $0$ on $\mathbb{R}^m.$ The isomorphisms $\tau$ is defined as
\begin{equation}\label{eq.tauisom}
\tau\varphi[f(x) d x] = (-1)^{\frac{m(m-1)}{2}} \varphi[f(x) d x\otimes \mu]  = (-1)^{\frac{m(m-1)}{2}} \varphi[f(x) d x\otimes dx],\end{equation}
where $\varphi\in \mathcal{D}^{0,0}(\mathbb{R}^m)$ and $f(x)$ is a smooth function with a compact support.

\begin{definition}
A $(p,p)$-supercurrent $$\varphi=\sum_{J,K}\varphi_{JK} dx_J \otimes dx_K$$ is called \emph{symmetric} if $\varphi[\psi]=\varphi[(-1)^{m-p}I(\psi)]$ 
for any form $\psi\in \mathcal{E}_c^{m-p,m-p}(\mathbb{R}^m)$. This condition is equivalent to the condition
$\varphi_{JK}=\varphi_{KJ}$ for all indexes $K$ and $J.$
\end{definition}
\begin{definition}We say that a symmetric $(p,p)-$current $T$ is positive if 
\begin{equation}\label{eq.postro}
 T[(-1)^{\frac{(m-p)(m-p-1)}{2}}\beta \wedge I(\beta) ] \geq 0
\end{equation}

for every $\beta\in \mathcal{E}^{m-p,0}_c(\mathbb{R}^m).$
\end{definition}
\begin{remark}
In the complex geometry any positive current is automatically symmetric. This does not hold in the tropical case. Let us consider an example.
The current $$\varphi=a_{11} dx_1\otimes d x_1 + a_{12} d x_1\otimes d x_2 + a_{21} dx_2\otimes d x_1 + a_{22} dx_2\otimes d x_2 $$ 
in $\mathbb{R}^2,$ where $a_{jk}\in \mathbb{R},$
satisfies the equation (\ref{eq.postro}) if and only if $a_{11} a_{22}-a_{12} a_{21}\geq 0.$
Obviously, this inequity may hold even if $a_{12}\neq a_{21}.$
\end{remark}

\begin{proposition}[\cite{Lag}, Proposition 1.13]\label{pr.supercurr} 
Let $\varphi$ be a $d''$-closed symmetric positive $(1,1)$-supercurrent on $\mathbb{R}^m.$ 
Then there exists a convex function $f: \mathbb{R}^m \rightarrow \mathbb{R}$ such that 
$$\varphi=d' d'' f.$$
\end{proposition}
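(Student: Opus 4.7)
The plan is to prove this as the tropical counterpart of the classical $\partial\overline{\partial}$-lemma on $\mathbb{C}^m$. The proof splits naturally into two stages: first construct a distribution $f$ with $\varphi=d'd''f$ using only the closedness and symmetry of $\varphi$, and then use the positivity hypothesis to upgrade $f$ to a convex function.

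For the first stage, I begin by observing that symmetry automatically promotes $d''$-closedness to $d'$-closedness. Indeed, writing $\varphi=\sum_{j,k}\varphi_{jk}\,dx_j\otimes dx_k$ with $\varphi_{jk}=\varphi_{kj}$, the condition $d''\varphi=0$ reads $\partial_l\varphi_{jk}=\partial_k\varphi_{jl}$ for all $j$ and $l\neq k$, while $d'\varphi=0$ reads $\partial_l\varphi_{jk}=\partial_j\varphi_{lk}$ for all $k$ and $l\neq j$; the two systems are transported into one another by $\varphi_{jk}\leftrightarrow\varphi_{kj}$. Equivalently, applying the involution $I$ and using $I\circ d''=\pm\,d'\circ I$ together with $I\varphi=\pm\varphi$ gives the same conclusion. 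Hence $d\varphi=0$ for $d=d'+d''$, a genuine differential thanks to $d'd''=-d''d'$.

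Since $\mathbb{R}^m$ is contractible, the Poincaré lemma for the double complex of supercurrents (established componentwise exactly as the classical de Rham/distributional Poincaré lemma, using that $d'$ and $d''$ act on the two factors of $\Lambda^\bullet T^*\mathbb{R}^m\otimes\Lambda^\bullet T^*\mathbb{R}^m$) yields a $1$-supercurrent $\gamma=\gamma^{1,0}+\gamma^{0,1}$ with $\varphi=d\gamma$. Matching bidegrees in $\varphi=d\gamma$ gives
\[ d'\gamma^{1,0}=0,\qquad d''\gamma^{0,1}=0,\qquad \varphi=d''\gamma^{1,0}+d'\gamma^{0,1}. \]
Applying the $d'$- and $d''$-Poincaré lemmas separately furnishes $(0,0)$-supercurrents $u,v$ with $\gamma^{1,0}=d'u$ and $\gamma^{0,1}=d''v$. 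Using $d''d'=-d'd''$,
\[ \varphi=d''(d'u)+d'(d''v)=d'd''(v-u)=:d'd''f. \]
Via the isomorphism $\tau$ of (\ref{eq.tauisom}), $f$ is an ordinary distribution on $\mathbb{R}^m$.

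For the second stage, a direct computation shows $d'd''f=\sum_{j,k}\bigl(\partial^2 f/\partial x_j\partial x_k\bigr)\,dx_j\otimes dx_k$ in the distributional sense, and testing the positivity definition (\ref{eq.postro}) against forms $\beta=\sum_j \beta_j(x)\,\widehat{dx}_j\otimes 1$ (where $\widehat{dx}_j$ omits $dx_j$) translates, after tracking the combinatorial signs, into the statement that the matrix of distributions $\bigl(\partial^2 f/\partial x_j\partial x_k\bigr)$ is a positive semidefinite matrix of Radon measures. It is a classical regularity result that any such distribution coincides almost everywhere with a (unique up to an affine function) convex function on $\mathbb{R}^m$; replacing $f$ by this convex representative gives the desired $f$.

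The main obstacle I expect is ensuring the Poincaré lemma is truly available at the level of \emph{supercurrents}, not just smooth superforms; this reduces to a softness/fine-resolution argument for the sheaves $\mathcal{D}^{p,q}$ on contractible opens, and the bookkeeping between the two tensor factors when decomposing $\gamma$ by bidegree must be done carefully. The convexity upgrade in the second stage is standard once one has matched the tropical positivity condition with positive semidefiniteness of the distributional Hessian; the definitions are set up precisely to make this translation immediate.
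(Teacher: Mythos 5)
Your argument is correct, and its first stage is essentially the paper's: both reduce to a double application of the Poincar\'e lemma, using symmetry to close the intermediate $1$-current. The difference is organizational --- you work in the total complex ($d=d'+d''$, then split $\gamma$ by bidegree and integrate each piece), whereas the paper transports each ``row'' $\sum_k\tau\varphi_{jk}\,dx_k$ through the isomorphism $\tau$ of (\ref{eq.tauisom}) to an ordinary closed $1$-current on $\mathbb{R}^m$ and only ever invokes the classical distributional Poincar\'e lemma; that sidesteps the exactness of the total complex of supercurrents, which your route still owes a (standard but nonzero) justification. The genuine divergence is in the second stage. You translate positivity (\ref{eq.postro}) into positive semidefiniteness of the distributional Hessian and then cite the classical regularity fact that such a distribution agrees a.e.\ with a convex function. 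That fact is true, but it is precisely the point the author singles out as insufficiently justified in Lagerberg's original proof: the paper proves it from scratch by mollifying $\tau f$, showing each $\tau f_\varepsilon$ is convex (testing positivity against $(m-1,0)$-forms built from $v_j\sqrt{\rho}$, exactly your $\beta$), showing $\varepsilon\mapsto\tau f_\varepsilon$ is monotone via a second positivity test with the primitives $G_j$, and ruling out the limit $-\infty$ by convexity. So your proof is shorter but outsources the analytic core to an external theorem, while the paper's buys self-containedness at the cost of the explicit mollification computation. Two small points to watch in your version: the test forms require $\sqrt{\rho}$ to be smooth, so choose $\rho$ a square of a smooth bump (and pass to general nonnegative test functions by density) before concluding the Hessian entries are Radon measures; and note that $d'$-closedness of $\varphi$, which you derive from symmetry, is also needed to make the $(2,0)$ and $(0,2)$ components of $d\gamma$ vanish consistently.
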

In our opinion the proof of this proposition in \cite{Lag} is not rigorous enough. 
We will give more explicit proof, but it repeats the ideas of the proof from \cite{Lag}. 
\begin{proof} 
The current $\varphi$ can be written as $$\varphi=\sum^m_{j,k=1} \varphi_{jk} d x_j \otimes d x_k,$$ 
where $\varphi_{jk}\in \mathcal{D}^{0,0}(\mathbb{R}^m).$ 
We defined (\ref{eq.tauisom}) the isomorphism $\tau:\mathcal{D}^{0,0}(\mathbb{R}^m) \simeq \mathcal{D}^{0}(\mathbb{R}^m).$
Consider the $1$-current $$\varphi_j=\sum^m_{j=1} \tau \varphi_{jk} d x_k\in \mathcal{D}^{1}(\mathbb{R}^m),$$ the equation $d'' \varphi =0 $
is equivalent to the system of equation $d \varphi_j=0$ for $j\in\{1,\dots,m\}.$ From the Poincar\'{e} lemma for usual currents it follows that there are $0$-currents $\psi_j$ such that $d\psi_j =  \varphi_j.$
Consider the $1$-current $$\psi=\sum^m_{j=1} \psi_j d x_j.$$ Since $\varphi$ is symmetric and, consequently, $\varphi_{jk}=\varphi_{kj},$ we get 
$$d \psi=\sum^m_{j,k=1} \tau \varphi_{jk} d x_{k}\wedge  d x_{j}=\sum_{j<k} \tau \varphi_{jk} - \tau \varphi_{kj} d x_{k}\wedge  d x_{j}=0.$$ 
Therefore there is a current $f\in \mathcal{D}^{0,0}(\mathbb{R}^m)$ such that $d \tau f = \psi.$ Since $\frac{\partial^2}{\partial x_{j} \partial x_{k}} \tau f= \tau \varphi_{jk},$
we get $d' d'' f = \varphi.$

Now we will prove that $f$ can be realized by a convex function.
Let $\rho(x)$ be a nonnegative $C^{\infty}$-radial function in $\mathbb{R}^m$ such that it has compact support and $\int_{\mathbb{R}^m} \rho(x) d x = 1.$
Then consider the regularization $\tau f_\varepsilon$ of the current $\tau f:$
$$\tau f_\varepsilon(y)= \tau f[\frac{1}{\varepsilon^m}\rho(\frac{x-y}{\varepsilon}) d x]=\frac{1}{\varepsilon^m} \int_{\mathbb{R}^m}\tau f(x)  \rho(\frac{x-y}{\varepsilon}) d x,$$
where $\varepsilon > 0.$
The function $\tau f_\varepsilon(y)$ is smooth and it converges to $\tau f$ in $\mathcal{D}^{0}(\mathbb{R}^m)$ as $\varepsilon$ tends to $0.$

Let us compute $\frac{\partial^2}{\partial y_{j} \partial y_{k}} \tau f_\varepsilon(y),$ using integration by parts we get
\begin{multline}\label{eq.f_e}
\frac{\partial^2}{\partial y_{j} \partial y_{k}} \tau f_\varepsilon(y)=\int_{\mathbb{R}^m}\tau f(x)  \frac{\partial^2}{\partial y_{j} \partial y_{k}} \frac{1}{\varepsilon^m} \rho(\frac{x-y}{\varepsilon}) d x
=\\=\int_{\mathbb{R}^m}(\frac{\partial^2}{\partial x_{j} \partial x_{k}} \tau f(x))  \frac{1}{\varepsilon^m} \rho(\frac{x-y}{\varepsilon}) d x= \tau \varphi_{jk}[\frac{1}{\varepsilon^m}\rho(\frac{x-y}{\varepsilon}) d x].
\end{multline}
Let us show that $\tau f_\varepsilon(y)$ is a convex function. It is enough to check that 
$$\sum^m_{j,k=1} v_j v_k \frac{\partial^2}{\partial y_{j} \partial y_{k}} \tau f_\varepsilon(y) \geq 0$$
for any $v=(v_1,\dots,v_m)\in \mathbb{R}^m$ and any $y\in\mathbb{R}^m.$

Consider the $(m-1,0)-$superform 
$$\rho_{v}=\sum^m_{j=1} (-1)^j v_j \sqrt{\frac{1}{\varepsilon^m} \rho(\frac{x-y}{\varepsilon})} d x[j],$$
where 
$$ d x[j] = d x_1\wedge \dots \wedge d x_{j-1}\wedge d x_{j+1} \wedge \dots \wedge d x_m.$$
Because $\varphi$ is a positive current we get
$$(-1)^{\frac{(m-2)(m-1)}{2}}\varphi[\rho_{v} \wedge I(\rho_{v})]\geq 0.$$
In the other hand,
$$(-1)^{\frac{(m-2)(m-1)}{2}}\varphi[\rho_{v} \wedge I(\rho_{v})]=(-1)^{\frac{(m-2)(m-1)}{2} + m-1}\sum^m_{j,k=1} v_j v_k \varphi_{jk}[\frac{1}{\varepsilon^m} \rho(\frac{x-y}{\varepsilon}) d x \otimes d x]= 
\sum^m_{j,k=1} v_j v_k \frac{\partial^2}{\partial y_{j} \partial y_{k}} \tau f_\varepsilon(y).$$
Thus $\tau f_\varepsilon(y)$ is a convex function. 

Let us show that the sequence $\tau f_\varepsilon$ converges to some convex function as $\varepsilon$ tend to $0.$
First, let us compute $\frac{\partial}{\partial \varepsilon} \tau f_\varepsilon(y):$
$$\frac{\partial}{\partial \varepsilon} \tau f_\varepsilon(y)=
\int_{\mathbb{R}^m} - \tau f(x) (\frac{m}{\varepsilon^{m+1}}  \rho(\frac{x-y}{\varepsilon}) +\sum^m_{j=1} \frac{1}{\varepsilon^{m+2}} (x_j-y_j) \frac{\partial\rho}{\partial t_j}(\frac{x-y}{\varepsilon} )) d x
=\dots,$$
where $\frac{\partial\rho}{\partial t_j}(\cdot)$ is the derivative of $\rho(t_1,\dots,t_m)$ by the $j-$th argument.
Since
$$\frac{\partial}{\partial x_j} (\tau f(x) (x_j - y_j) \rho(\frac{x-y}{\varepsilon}))=(\frac{\partial}{\partial x_j}\tau f(x))(x_j - y_j) \rho(\frac{x-y}{\varepsilon})
+ \tau f(x)  \rho(\frac{x-y}{\varepsilon}) + 
\frac{1}{\varepsilon} \tau f(x) (x_j - y_j)  \frac{\partial\rho}{\partial t_j}(\frac{x-y}{\varepsilon} ),$$
using integration by parts we get
$$\dots=\int_{\mathbb{R}^m} \frac{1}{\varepsilon^{m+1}} \sum^m_{j=1}(\frac{\partial}{\partial x_j}\tau f(x))(x_j - y_j) \rho(\frac{x-y}{\varepsilon}) d x = \dots$$
Since $\rho$ is nonnegative radial function and has compact support, the function $$g_j(u,t)=u \rho(t_1,\dots,t_{j-1},u,t_{j+1},\dots,t_m)$$ is an odd function as a function on the argument $u$ and  has compact support,
moreover, $g_j(u,t)\leq 0$ if $u\leq 0.$
Let us consider the function 
$$G_j(t_1,\dots,t_m)=\int^{t_j}_{-\infty} g_j(u,t) d u.$$
From the properties of $g_j(u,t)$ it follows that 
 $G_j(t_1,\dots,t_m)$ has compact support and nonpositive. Moreover $$\frac{\partial}{\partial x_j} G_{j}(\frac{x-y}{\varepsilon})=\frac{1}{\varepsilon}(x_j-y_j) \rho(\frac{x-y}{\varepsilon}).$$
 Thus, we can continue our computations as follows
 $$\dots=\frac{1}{\varepsilon^m} \int_{\mathbb{R}^m} \sum^m_{j=1}(\frac{\partial}{\partial x_j}\tau f(x)) \frac{\partial}{\partial x_j} G_{j}(\frac{x-y}{\varepsilon}) d x=
 -\frac{1}{\varepsilon^m} \int_{\mathbb{R}^m} \sum^m_{j=1} (\frac{\partial^2}{\partial x_j\partial x_j}\tau f(x))G_{j}(\frac{x-y}{\varepsilon}) d x =\dots$$
 $$\dots=\frac{1}{\varepsilon^m} \int_{\mathbb{R}^m} \sum^m_{j=1} \tau \varphi_{jj}[-G_{j}(\frac{x-y}{\varepsilon}) d x]=\dots$$
 Consider the $(m-1,0)-$superform  $$T_j= \sqrt{-\frac{1}{\varepsilon^m} G_{j}(\frac{x-y}{\varepsilon})} d x[j],$$
  where  $$ d x[j] = d x_1\wedge \dots \wedge d x_{j-1}\wedge d x_{j+1} \wedge \dots \wedge d x_m.$$
 Since $\varphi$ is positive we get 
 $$(-1)^{\frac{(m-2)(m-1)}{2}}\varphi[T_j \wedge I(T_j)]\geq 0.$$
 Finally, we obtain
 $$\dots=(-1)^{\frac{(m-2)(m-1)}{2}} \sum^m_{j=1} \varphi[T_j \wedge I(T_j)]\geq0. $$
 We showed that $\frac{\partial}{\partial \varepsilon} \tau f_\varepsilon(y)\geq0$ for any $\varepsilon>0$ and any $y\in \mathbb{R}^m,$ whence 
 $\tau f_\varepsilon(y) > \tau f_{\varepsilon'}(y)$ if $\varepsilon> \varepsilon'.$
 
 Suppose there is a point $y_0$ such that $\lim_{\varepsilon \rightarrow 0} \tau f_\varepsilon(y_0) = - \infty.$
 Since $ \tau f_\varepsilon(y)$ is convex we have 
 $$  \tau f_\varepsilon(y_0 \lambda + (1-\lambda) y_1) \leq \tau \lambda f_\varepsilon(y_0)+ (1-\lambda) \tau f_\varepsilon (y_1)$$
 for any $\lambda \in (0,1)$ and any $y_1 \in \mathbb{R}^m.$ It implies that  
 $$\lim_{\varepsilon \rightarrow 0} \tau f_\varepsilon(y) = - \infty$$ for any point $y \in \mathbb{R}^m.$
 In the other hand,  $\tau f_\varepsilon$ converges in sense of currents to  $\tau f.$ 
 Thus if $\varphi \not\equiv 0$ the current $\tau f_\varepsilon(y)$ converges $ \tau f(y) \equiv - \infty.$
 If $\varphi \not\equiv 0$ we may choose $\tau f$ to be equal to $0$ identically. We obtained a contradiction, 
 therefore $\lim_{\varepsilon \rightarrow 0} \tau f_\varepsilon(y)$ should be finite for any point $y.$
 
 Thus  $\tau f_\varepsilon$ is a decreasing point-wise bounded sequence of convex functions.
 It is easy to check that it converges to a convex function $$\widetilde{f}(y)=\lim_{\varepsilon \rightarrow 0} \tau f_\varepsilon(y).$$
 Finally, we get $$d' d'' \tau^{-1} \widetilde{f}(y) = \varphi.$$ 
\end{proof}

\begin{remark}
Because $\mathbb{R}^m$ is a tropical analog of $(\mathbb{C}^*)^m,$ the last proposition is a tropical analog of the following statement 
from the complex geometry. 
\begin{proposition}For any closed and positive $(1,1)$-current $\omega$ on $(\mathbb{C}^*)^m$ there is a plurisubharmonic 
function $\rho\in\mathrm{Psh}((\mathbb{C}^*)^m)$ such that 
$$\omega=i \partial \overline{\partial} \rho.$$
\end{proposition}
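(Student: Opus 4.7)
The plan is to mirror the three-step proof of Proposition \ref{pr.supercurr} given above for the tropical case, with two substitutions: the Poincar\'e lemma for real currents on $\mathbb{R}^m$ is replaced by the $\bar\partial$-Poincar\'e (Dolbeault) lemma on the Stein manifold $X=(\mathbb{C}^*)^m$, and the tropical convolution/positivity argument is replaced by its complex counterpart using a radial smoothing kernel and Hermitian positivity.

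First, produce a scalar potential. Since $\omega$ is $d$-closed of bidegree $(1,1)$, in particular $\bar\partial\omega=0$. Since $X=(\mathbb{C}^*)^m$ is Stein, Cartan's Theorem B together with the Dolbeault isomorphism for currents yields vanishing of the relevant $(0,1)$-cohomology with values in $\Omega^1_X$, so we can solve $\bar\partial\alpha=\omega$ for a $(1,0)$-current $\alpha$. Then $\partial\alpha$ is a $\bar\partial$-closed $(2,0)$-current; after correcting $\alpha$ by a suitable holomorphic $(1,0)$-term, we may assume $\partial\alpha=0$. Conjugating, $\overline{\alpha}$ is a $\bar\partial$-closed $(0,1)$-current, and another application of the Stein $\bar\partial$-lemma gives a $0$-current $u$ with $\bar\partial u=\overline{\alpha}$. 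A short computation then shows that $\omega=i\,\partial\bar\partial \rho_0$ with $\rho_0:=-2\,\mathrm{Re}\,u$.

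To upgrade $\rho_0$ to a genuine plurisubharmonic function, regularize by convolution in the abelian group $(\mathbb{C}^*)^m$ against a smooth, nonnegative, radial bump $\chi_\varepsilon$ of total mass $1$ concentrated near the identity. The smoothings $\rho_\varepsilon:=\rho_0*\chi_\varepsilon$ are smooth and satisfy $i\,\partial\bar\partial\rho_\varepsilon=\omega*\chi_\varepsilon$, which is a smooth positive $(1,1)$-form; hence each $\rho_\varepsilon$ is smooth and plurisubharmonic. Mirroring the monotonicity computation of the tropical proof, one differentiates $\rho_\varepsilon$ in $\varepsilon$, integrates by parts, and recognizes the resulting integrand as the pairing of $\omega$ with a test $(m-1,m-1)$-form of the shape $T\wedge\overline{T}$; Hermitian positivity then makes this derivative nonnegative, so $\rho_\varepsilon$ is decreasing as $\varepsilon\to 0$, and the pointwise limit $\rho$ is a plurisubharmonic function with $\omega=i\,\partial\bar\partial\rho$.

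I expect the main obstacle to be the step of killing $\partial\alpha$: unlike $\mathbb{R}^m$, the manifold $(\mathbb{C}^*)^m$ carries nontrivial global holomorphic $2$-forms such as $\tfrac{dz_i}{z_i}\wedge\tfrac{dz_j}{z_j}$, so the obstruction class of $\partial\alpha$ in $H^0(X,\Omega^2_X)$ does not vanish a priori. One must exploit the reality and positivity of $\omega$ together with the Hodge decomposition on $X$ (or on a smooth toric compactification of it) to verify that the particular obstruction encountered here is trivial; once this cohomological bookkeeping is completed, the rest of the argument parallels the tropical proof above.
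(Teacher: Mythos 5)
You have put your finger on exactly the right spot, but the difficulty you defer to ``cohomological bookkeeping'' is not bookkeeping: it is fatal, because the proposition as stated is false for $m\geq 2$. Positivity and reality of $\omega$ do not force its de Rham class to vanish in $H^2((\mathbb{C}^*)^m,\mathbb{R})$, while any current of the form $i\partial\overline{\partial}\rho=\tfrac12 d\bigl(i(\overline{\partial}-\partial)\rho\bigr)$ is $d$-exact. A concrete counterexample on $(\mathbb{C}^*)^2$: let $\alpha=\frac{dz_1}{z_1}+i\frac{dz_2}{z_2}$ and $\omega=i\,\alpha\wedge\overline{\alpha}$. This is a smooth closed $(1,1)$-form, positive because it is $i$ times a $(1,0)$-form wedged with its conjugate; yet on the $2$-torus $T=\{|z_1|=|z_2|=1\}$ one computes $\omega|_T=2\,d\theta_1\wedge d\theta_2$, so $\int_T\omega=8\pi^2\neq 0$, hence $\omega$ is not exact and admits no global psh potential. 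In your scheme the failure appears exactly where you predicted: one solution of $\overline{\partial}\alpha'=\omega$ is $\alpha'=-i(\log|z_1|^2-i\log|z_2|^2)\,\alpha$, and then $\partial\alpha'=2\,\frac{dz_1}{z_1}\wedge\frac{dz_2}{z_2}$, a closed holomorphic $2$-form with nonzero class in $H^2$, which therefore is not $\partial\gamma$ for any holomorphic $1$-form $\gamma$ and cannot be corrected away. No amount of exploiting positivity will close this gap.

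The honest statement, under which your argument does go through essentially as written (and reduces to the standard proof of the local version, Demailly's Proposition III.1.19 cited by the paper), adds the hypothesis $[\omega]=0$ in $H^2_{dR}((\mathbb{C}^*)^m,\mathbb{R})$ --- for instance $\omega$ exact, which is automatic on $\mathbb{C}^m$, on balls, and for $m=1$ since $H^2(\mathbb{C}^*)=0$. This is also the faithful analogue of Proposition \ref{pr.supercurr}: there the two successive applications of the Poincar\'{e} lemma use $H^1(\mathbb{R}^m,\mathbb{R})=0$, the first obstruction being killed by the \emph{symmetry} hypothesis; in the complex case Hermitian symmetry is automatic for real $(1,1)$-currents, but the second, topological obstruction space, which vanishes for $\mathbb{R}^m$, is $\Lambda^2\mathbb{R}^m\neq 0$ for $(\mathbb{C}^*)^m$. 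Note finally that the paper offers no proof of this proposition to compare against --- it only cites the local version and asserts that it generalizes to $(\mathbb{C}^*)^m$, an assertion that is itself incorrect without the exactness hypothesis; so the defect lies in the statement rather than in your technique, and your write-up would be fixed by adding that hypothesis and deleting the claim that the obstruction ``is trivial''.
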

One can find the local version of this 
statement in \cite[Proposition 1.19, Chapter III]{Dem}, the proof of the local version can be generalized to the  $(\mathbb{C}^*)^m$ case.   
\end{remark}

\subsection{The Ronkin functions of generalized amoebas} First, we introduce some notation. 
Consider the function on $X\setminus V$ $$x_j(p)=\RRe \int^p_{p_0} \omega_j,$$
we have $$\Log_{\omega,p_0}(p)=(x_1(p),\dots, x_m(p)).$$
Put
$$\Omega_j=\omega_1\wedge\dots\wedge\omega_{j-1}\wedge\omega_{j+1}\wedge\dots\wedge\omega_{m}.$$
Let $$ d x[j] = d x_1\wedge \dots \wedge d x_{j-1}\wedge d x_{j+1} \wedge \dots \wedge d x_m.$$
We denote by $\mathcal{E}^{d}_c(\mathbb{R}^m)$ the space of differential forms of degree $d$ with compact support on $\mathbb{R}^m.$

\begin{definition} \label{def.ronkin1}  The Ronkin function  $R_\omega$ of the amoeba $\mathcal{A}_\omega$ is a function on $\mathbb{R}^m$ such that 
the value of the current $d' d'' R_\omega$ acting on a form $\psi\in \mathcal{E}^{n,n}_c(\mathbb{R}^m),$
$$\psi=\sum^m_{j,k=1} f_{jk}(x) d x[j]\otimes d x[k],$$
which can be formally written as
$$(d' d'' R_\omega) [\psi]=\int_{\mathbb{R}^m} (d' d'' R_\omega(x)) \wedge \psi,$$ is equal to
\begin{equation}\label{eq.Rc}(d' d'' R_\omega) [\psi]=\sum^m_{j,k=1} 
\frac {(-1)^{n}}{2^n} \RRe \int_{X\setminus V} \frac{1}{(2 \pi i)^n} 
\Log^*_\omega (f_{jk}) \Omega_{j}\wedge\overline{\Omega}_{k}.\end{equation}
\end{definition}

\begin{remark}
We believe that one can think about the $(1,1)$-tropical supercurrent $d' d'' R_\omega(x)$ as current of integration over the ``tropical manifold'' $\mathcal{A}_\omega.$
The complex geometry analog is the following. 
Let  $F(z)=0$ is a holomorphic function in $(\mathbb{C}^*)^m$ such that 
its zero set  $$X_F=\{z\in (\mathbb{C}^*)^m: F(z)=0\}$$ 
is smooth and $F(z)$ has a first order zero along $X_F=\{z\in (\mathbb{C}^*)^m\},$ then
$\frac{i}{\pi} \partial \overline{\partial} \log|F| =[X_F],$ 
where $[X_F]$ is an current of integration along $X_F.$ 
\end{remark}

There are equivalent definitions of the Ronkin function in terms of usual currents.

\begin{proposition} \label{pr.ronkin_eq2}  Suppose $R_\omega$ is the Ronkin function of the amoeba $\mathcal{A}_\omega$. 
For any $\phi(x) d x$, where $\phi(x)\in \mathcal{E}^{0}_c(\mathbb{R}^m)$ and $d x = d x_1 \wedge \dots \wedge d x_m,$ holds
\begin{equation}\label{eq.Rd2} \frac{\partial^2}{\partial x_k \partial x_j}R_\omega [\phi(x) d x]=
(-1)^{\frac{m(m-1)}{2}+j+k} \frac {1}{2^n} \RRe \int_{X\setminus V} \frac{1}{(2 \pi i)^n} 
\Log^*_\omega (\phi) \Omega_{j}\wedge\overline{\Omega}_{k},\end{equation}
where   $\frac{\partial^2}{\partial x_k \partial x_j}R_\omega [\phi(x) d x]$  can be formally written as 
$$\frac{\partial^2}{\partial x_k \partial x_j}R_\omega [\phi(x) d x]= \int_{\mathbb{R}^m} (\frac{\partial^2}{\partial x_k \partial x_j}R_\omega) \phi(x) d x.$$ 
\end{proposition}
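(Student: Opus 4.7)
The plan is to show that Proposition \ref{pr.ronkin_eq2} follows from Definition \ref{def.ronkin1} by plugging in a carefully chosen test superform and then translating the resulting identity through the isomorphism $\tau$ of (\ref{eq.tauisom}) that relates $(0,0)$-supercurrents to usual $0$-currents.

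First, I would fix indices $j, k$ and evaluate both sides of (\ref{eq.Rc}) on the single-term test superform
$$\psi = f_{jk}(x)\, dx[j]\otimes dx[k],$$
which has bidegree $(n,n)$ since $m=n+1$. The right-hand side of (\ref{eq.Rc}) then reduces to just the $(j,k)$-summand, which is precisely the right-hand side of (\ref{eq.Rd2}) up to a sign. The left-hand side can be rewritten by moving both differentials onto the test form using the adjunction rules $(d'\varphi)[\eta]=(-1)^{p+q+1}\varphi[d'\eta]$ and $(d''\varphi)[\eta]=(-1)^{p+q+1}\varphi[d''\eta]$, which yields $(d'd''\underline{R}_\omega)[\psi]=\underline{R}_\omega[d'd''\psi]$, where $\underline{R}_\omega = \tau^{-1}R_\omega \in \mathcal{D}^{0,0}(\mathbb{R}^m)$.

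Next, I would compute $d'd''\psi$ explicitly. Applying $d''$ first and using that $dx_l\wedge dx[k]$ vanishes unless $l=k$ (in which case it equals $(-1)^{k-1}dx$), I get
$$d''\psi = (-1)^{m+k}\frac{\partial f_{jk}}{\partial x_k}\, dx[j]\otimes dx.$$
Applying $d'$ to this and using the analogous identity $dx_l\wedge dx[j]=\delta_{lj}(-1)^{j-1}dx$, I obtain
$$d'd''\psi = (-1)^{m+k+j-1}\frac{\partial^2 f_{jk}}{\partial x_j\partial x_k}\, dx\otimes dx.$$
Now I use (\ref{eq.tauisom}) to rewrite $\underline{R}_\omega[g\,dx\otimes dx] = (-1)^{m(m-1)/2}\int_{\mathbb{R}^m} R_\omega\, g\,dx$, which by the distributional definition of the second partial derivative equals $(-1)^{m(m-1)/2}\frac{\partial^2}{\partial x_j\partial x_k}R_\omega[g\,dx]$ after moving derivatives to $g$.

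Finally, I would match signs. Combining the preceding steps gives
$$(d'd''R_\omega)[\psi] = (-1)^{m+k+j-1+m(m-1)/2}\,\frac{\partial^2}{\partial x_j\partial x_k}R_\omega[f_{jk}\,dx],$$
while Definition \ref{def.ronkin1} gives
$$(d'd''R_\omega)[\psi] = \frac{(-1)^n}{2^n}\RRe\int_{X\setminus V}\frac{1}{(2\pi i)^n}\Log^*_\omega(f_{jk})\,\Omega_j\wedge\overline{\Omega}_k.$$
Equating these and using $m=n+1$ (so $(-1)^{m-1+n}=(-1)^{2n}=1$) collapses the sign in front to $(-1)^{m(m-1)/2+j+k}$, yielding (\ref{eq.Rd2}) with $\phi=f_{jk}$. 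The main obstacle is purely a bookkeeping one: tracking the signs coming from the adjunction rules for $d'$ and $d''$, the reordering of wedge products of $dx[j]$ with $dx_j$, and the $(-1)^{m(m-1)/2}$ introduced by $\tau$; nothing deep happens beyond these sign computations.
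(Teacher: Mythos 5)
Your proposal is correct, and it is essentially the paper's own argument: both proofs plug the single-term test superform $f_{jk}\,dx[j]\otimes dx[k]$ into Definition \ref{def.ronkin1}, track the sign $(-1)^{n+j+k}$ coming from the wedge products $dx_j\wedge dx[j]$ and $dx_k\wedge dx[k]$, and pick up the extra $(-1)^{m(m-1)/2}$ when converting the tropical $(m,m)$-integral to an ordinary one. The only (cosmetic) difference is that you move $d'd''$ onto the test form via the adjunction rules and then integrate by parts back, whereas the paper wedges the distribution-coefficient superform $\sum \frac{\partial^2 R_\omega}{\partial x_j\partial x_k}\,dx_j\otimes dx_k$ directly against $\psi$; the sign bookkeeping agrees in both versions.
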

\begin{proof}
Set $$\psi=\sum^m_{j,k=1} f_{jk}(x) d x[j]\otimes d x[k]\in\mathcal{E}^{n,n}_c(\mathbb{R}^m).$$
Then we have
$$(d' d''R_\omega) [\psi]=\int d' d''R_\omega \wedge \psi = \sum^m_{j,k=1} \int_{\mathbb{R}^m} (-1)^{n + j +k} (\frac{\partial^2}{\partial x_j \partial x_k} R_\omega) f_{jk} d x \otimes d x =\dots$$
Let us transform this integral of the tropical superform to the usual integral  
$$\dots= \sum^m_{j,k=1} (-1)^{\frac{m(m-1)}{2}} (-1)^{n + j +k}\int_{\mathbb{R}^m}  (\frac{\partial^2}{\partial x_j \partial x_k} R_\omega) f_{jk} d x = \sum^m_{j,k=1} (-1)^{\frac{m(m-1)}{2}} (-1)^{n + j +k} (\frac{\partial^2}{\partial x_j \partial x_k} R_\omega)[ f_{jk} d x]$$
Combining this equality with (\ref{eq.Rc}) we obtain (\ref{eq.Rd2}).
\end{proof}

If $\mathcal{A}_\omega$ does not satisfy the nondegeneracy condition, i.e., $\Omega_j\equiv 0$ 
for any $j,$ then $\frac{\partial^2}{\partial x_k \partial x_j} R_\omega \equiv 0$
and $R_\omega$ is an affine function. In this case  $R_\omega$ does not reflect any geometry of the amoeba, 
thus this case is not interesting for us. 
From this moment and till the end of this section 
{\bf we assume that the amoeba $\mathcal{A}_\omega$ satisfies the nondegeneracy condition.}
The Ronkin function will be our main tool to study geometry of amoebas when $m=n+1.$

\begin{theorem}\label{th.ronkin} 
  The following statements hold: 
  \begin{enumerate}
  \item The Ronkin function $R_\omega$ exists and it is unique up to addition of an affine function, 
  it is a continuous convex function on $\mathbb{R}^m.$
  \item Suppose that the amoeba $\mathcal{A}_\omega$ satisfies the nondegeneracy condition, 
  i.e., there exits $j\in\{1,\dots,m\}$ such that $\Omega_j\not\equiv 0$.
  Then for any connected open set $C\subset \mathbb{R}^m$ the restriction of $R_\omega(x)$ to 
  $C$ is affine if and only if $C$ does not intersect the amoeba $\mathcal{A}_\omega$.
  \end{enumerate}
\end{theorem}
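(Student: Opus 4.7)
The plan is to define directly, as a candidate for $d' d'' R_\omega$, the $(1,1)$-supercurrent
$$T = \sum_{j,k=1}^m T_{jk}\, d x_j \otimes d x_k$$
whose components $T_{jk}$ are the distributions prescribed by the right-hand side of (\ref{eq.Rc}), verify that $T$ is well-defined, symmetric, $d''$-closed, and positive, and then invoke Proposition~\ref{pr.supercurr} (the tropical $d' d''$-lemma) to produce a convex function $R_\omega$ with $d' d'' R_\omega = T$. Continuity is automatic from convexity on all of $\mathbb{R}^m$, and uniqueness up to addition of an affine function follows because a convex function whose Hessian vanishes as a current is affine.

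The four properties of $T$ are checked as follows. Well-definedness uses Proposition~\ref{pr.closed}: the preimage $\Log_\omega^{-1}(\supp \phi)$ is compact in $X\setminus V$, so the smooth form $\Omega_j \wedge \overline{\Omega_k}$ integrates against $\Log_\omega^*(\phi)$ without convergence issues. Symmetry $T_{jk} = T_{kj}$ follows from $\overline{\Omega_j \wedge \overline{\Omega_k}} = (-1)^{n^2}\Omega_k \wedge \overline{\Omega_j}$ after taking the real part. Closedness $d'' T = 0$ reduces, by integration by parts (justified by properness), to $d(\Omega_j \wedge \overline{\Omega_k}) = 0$ on $X\setminus V$, which holds because each $\omega_\ell$ is closed. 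For positivity, one tests on $\beta = \sum_j v_j(x)\, d x[j] \in \mathcal{E}_c^{n,0}(\mathbb{R}^m)$ and groups terms so that $T[(-1)^{n(n-1)/2}\beta \wedge I(\beta)]$ becomes, up to a positive constant, the integral over $X\setminus V$ of the manifestly nonnegative $(n,n)$-form $\frac{i^{n^2}}{(2\pi)^n}\Phi \wedge \overline{\Phi}$, where $\Phi = \sum_j (-1)^j \Log_\omega^*(v_j)\,\Omega_j$. A local computation in coordinates in which $\Omega_j = g_j\, d z_1\wedge\cdots\wedge d z_n$ confirms that the sign conventions $(-1)^n/2^n$ and $(-1)^{j+k+m(m-1)/2}$ appearing in Definition~\ref{def.ronkin1} and Proposition~\ref{pr.ronkin_eq2} are precisely the ones making the diagonal entries $T_{jj}$ nonnegative measures on $\mathbb{R}^m$.

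For part~(2), if $C \cap \mathcal{A}_\omega = \emptyset$ then $\Log_\omega^*(\phi) \equiv 0$ on $X\setminus V$ for every $\phi$ supported in $C$, so every $T_{jk}$ vanishes on $C$ and $R_\omega|_C$ is affine. Conversely, suppose $R_\omega|_C$ is affine on a connected open $C$ and, for contradiction, that $C \cap \mathcal{A}_\omega \neq \emptyset$. Then every diagonal component $T_{jj}$, which by the positivity verification is the pushforward of a nonnegative measure proportional to $|g_j|^2$ in local coordinates, vanishes on $C$. This forces each $\Omega_j$ to vanish identically on the nonempty open set $\Log_\omega^{-1}(C) \subset X\setminus V$. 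Since each $\Omega_j$ is holomorphic on the connected manifold $X\setminus V$ (connectedness of which follows from $V$ having real codimension two in the connected $X$), we conclude $\Omega_j \equiv 0$ on all of $X\setminus V$ for every $j$, contradicting the nondegeneracy condition.

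The main obstacle is the simultaneous bookkeeping of positivity and signs in the verification that $T$ satisfies the hypotheses of Proposition~\ref{pr.supercurr}; once this is done cleanly — in particular once the pairing $T[\beta \wedge I(\beta)]$ is recognized as the squared $L^2$-norm of the $(n,0)$-form $\Phi$ on $X\setminus V$ — both the existence of $R_\omega$ and the characterization of affine loci via nonvanishing of the positive pushforward measures $T_{jj}$ follow without further difficulty.
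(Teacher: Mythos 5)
Your proposal is correct and follows essentially the same route as the paper: define the candidate $(1,1)$-supercurrent from the right-hand side of (\ref{eq.Rc}), verify it is symmetric, $d''$-closed and positive (reducing positivity to the nonnegativity of $\Phi\wedge\overline{\Phi}$ for the $(n,0)$-form $\Phi=\sum_j \Log_\omega^*(v_j)\Omega_j$, where your extra factors $(-1)^j$ are a harmless bookkeeping discrepancy that does not affect nonnegativity), and invoke Proposition~\ref{pr.supercurr}; for part (2) the paper argues the strict positivity of $\frac{\partial^2}{\partial x_j^2}R_\omega[\phi\,dx]$ directly via the codimension of the zero set of $\Omega_j$, whereas you run the contrapositive through the identity theorem on the connected $X\setminus V$ --- the same idea in a slightly different packaging. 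Your explicit remarks on continuity and on uniqueness up to an affine function fill in points the paper leaves implicit.
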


\begin{proof}
Let $$\psi=\sum^m_{j,k=1} f_{jk}(x) d x[j]\otimes d x[k]$$
be a superform with a compact support of degree $(n,n).$
We denote by $S\in\mathcal{D}^{1,1}(\mathbb{R}^m)$
the current 
$$S[\psi]=\sum^m_{j,k=1} 
\frac {(-1)^{n}}{2^n} \RRe \int_{X\setminus V} \frac{1}{(2 \pi i)^n} 
\Log^*_\omega (f_{jk}) \Omega_{j}\wedge\overline{\Omega}_{k}.$$
We want to show that there is a function $R_\omega$ with required properties such that $d' d'' R_\omega = S.$

\begin{lemma}\label{lm.positive_ronkin} 
The current $S$ is $d''$-closed, $d'$-closed, symmetric and positive.   
\end{lemma}
\begin{proof}
Let us show that $d'' S=0.$ 
By definition, $$d'' S[\psi]=-S[d'' \psi],$$
where $\psi\in \mathcal{E}^{m-1,m-2}_c(\mathbb{R}^m).$
In coordinate terms we have $$\psi=\sum_{j<k} \sum^m_{l=1} f_{ljk}(x)  d x[l]\otimes d x[j,k],$$
where $$d x[j,k] = d x_1\wedge\dots\wedge \widehat{d x_{j}}\wedge \dots \wedge \widehat{d x_{k}}\wedge \dots\wedge d x_m,$$
i.e., $d x_{j}$ and $d x_{k}$ are missing.
Then \begin{equation}\label{eq.positive_ronkin1} 
      d'' \psi= (-1)^{m-1} \sum^m_{l=1}( \sum_{j<k} (-1)^{k} \frac{\partial}{\partial x_k} f_{ljk}(x) - \sum_{j>k} (-1)^k \frac{\partial}{\partial x_k} f_{lkj}(x) ) d x[l]\otimes d x[j].
     \end{equation}
 
Let $f(x)$ be a smooth function on $\mathbb{R}^m.$
Then we have $\Log^*_{\omega}(f(x))=f(x_1(p),\dots,x_m(p)),$
where  $x_j(p)=\RRe \int^p_{p_0} \omega_j.$
The differential form $d x_k$ on $X\setminus V$ is 
equal to  $$d x_k = \frac{1}{2} (\omega_k+ \overline{\omega}_k).$$
Observe that \begin{equation}\label{eq.positive_ronkin2} d \Log^*_{\omega}(f(x)) =\frac{1}{2} \sum^m_{j=1} \Log^*_{\omega}(\frac{\partial}{\partial x_j} f(x)) \omega_j + 
\frac{1}{2} \sum^m_{j=1} \Log^*_{\omega}(\frac{\partial}{\partial x_j} f(x)) \overline{\omega}_j.\end{equation}

Consider the differential $(n,n-1)$-form 
$$\Psi=\sum^m_{l=1} \sum^m_{j<k} \Log^*_{\omega}(f_{ljk}) \Omega_{l}\wedge \overline{\Omega}_{jk},  $$
where $$\Omega_{jk} = \omega_1\wedge\dots\wedge \widehat{\omega_j}\wedge \dots \wedge \widehat{\omega_k}\wedge \dots\wedge \omega_m,$$
i.e., $ \omega_{j}$ and $ \omega_{k}$ are missing.

Combining (\ref{eq.positive_ronkin1}) and (\ref{eq.positive_ronkin2}), we get
$$S[d''\psi]= \frac {(-1)^{n}}{2^{n-1}} \RRe \int_{X\setminus V} \frac{1}{(2 \pi i)^n} d \Psi.$$
Since all functions $f_{ljk}(x)$ have compact support and, by Proposition \ref{pr.closed},  the map $\Log^*_{\omega}$ is proper, the from $\Psi$ has a compact support and 
$\int_{X\setminus V}  d \Psi = 0.$
Thus $$d''S[\psi]=-S[d''\psi]=0.$$

The proof $d'$-closedness of $S$ repeats the proof of $d''$-closedness. 

Let us check that $S$ is symmetric.
By definition it means that $$S[\phi]=S[(-1)^{m-1}I(\phi)]$$ for any 
superform $\phi=\sum^m_{j,k=1} f_{jk} d x[j] \otimes d x[k]\in  \mathcal{E}^{m-1,m-1}_c(\mathbb{R}^m).$
Observe that $$(-1)^{m-1}I(\phi)= \sum^m_{j,k=1} f_{jk} d x[k] \otimes d x[j].$$
Then \begin{multline*} 
S[f_{jk} d x[j] \otimes d x[k]]=
\frac {(-1)^{n}}{2^n} \RRe \int_{X\setminus V} \frac{1}{(2 \pi i)^n} 
\Log^*_\omega (f_{jk}) \Omega_{j}\wedge\overline{\Omega}_{k}=\dots\\
\dots =\frac {(-1)^{n}}{2^n} \int_{X\setminus V} \frac{1}{(2 \pi i)^n} 
\Log^*_\omega (f_{jk}) \frac{1}{2}(\Omega_{j}\wedge\overline{\Omega}_{k}+(-1)^n \overline{\Omega}_{j}\wedge\Omega_{k} ) = \dots
\end{multline*}
Since $n^2$ and $n$ have the same parity, we get $$\Omega_{j}\wedge\overline{\Omega}_{k}= (-1)^{n^2} \overline{\Omega}_{k}\wedge\Omega_{j}
= (-1)^{n} \overline{\Omega}_{k}\wedge\Omega_{j}.$$
Hence, we obtain 
$$\dots=\frac {(-1)^{n}}{2^n} \int_{X\setminus V} \frac{1}{(2 \pi i)^n} 
\Log^*_\omega (f_{jk}) \frac{1}{2}(\Omega_{k}\wedge\overline{\Omega}_{j}+(-1)^n \overline{\Omega}_{k}\wedge\Omega_{j})= S[f_{jk} d x[k] \otimes d x[j]].$$
Therefore $S[\phi]=S[(-1)^{m-1}I(\phi)].$

Let us check the positivity of $S.$
By definition, $S$ is positive if and only if $$S[(-1)^{\frac{n(n-1)}{2}} \varphi \wedge I(\varphi)]\geq 0$$ for any $\varphi\in \mathcal{E}^{n,0}_c(\mathbb{R}^m).$ 
In coordinate terms we have $\varphi=\sum^m_{j=1} f_{j}(x) d x[j]\otimes 1,$
and $I(\varphi)=\sum^m_{j=1} f_{j}(x)1  \otimes d x[j].$
Set $\Phi = \sum^m_{j=1} Log^*_{\omega}(f_j) \Omega_j,$
then we have $\overline{\Phi}=\sum^m_{j=1} Log^*_{\omega}(f_j) \overline{\Omega}_j.$
It is easy to check that 
$$ S[(-1)^{\frac{n(n-1)}{2}} \varphi \wedge I(\varphi)] = 
\frac {(-1)^{n}}{2^n} (-1)^{\frac{n(n-1)}{2}}  \RRe \int_{X\setminus V} \frac{1}{(2 \pi i)^n} \Phi \wedge  \overline{\Phi}.$$
Since $\Phi$ is an $(n,0)$-form, the form $(-1)^{\frac{n(n-1)}{2}+n}  \frac{1}{(2 \pi i)^n}  \Phi \wedge  \overline{\Phi}$ is positive.
Whence $$S[(-1)^{\frac{n(n-1)}{2}} \varphi \wedge I(\varphi)]\geq 0.$$
\end{proof}

It follows from Lemma \ref{lm.positive_ronkin} and Proposition \ref{pr.supercurr} that there is a convex function $R_\omega$ such that $d' d'' R_\omega = S.$
By definition, this function is the Ronkin function of the amoeba $\mathcal{A}_{\omega}.$

Let us prove the second part of the theorem. Let $C$ be a connected open set in $\mathbb{R}^m.$ 
We assume that the amoeba $\mathcal{A}_\omega$ is nondegenerate.
We want to show that the function $R_\omega|_{C}$ is affine if and only if $C \cap \mathcal{A}_\omega = \emptyset.$ 
From the Proposition \ref{pr.ronkin_eq2} it follows that
the support $\supp \frac{\partial^2}{\partial x_j \partial x_k} R_\omega$ of the current $\frac{\partial^2}{\partial x_j \partial x_k} R_\omega$ is a subset of $\mathcal{A}_\omega.$
Therefore, if $C \cap \mathcal{A}_\omega = \emptyset$, then $\frac{\partial^2}{\partial x_j \partial x_k} R_\omega|_{C}\equiv0$ and
$$R_\omega|_{C} = A_0 + x_1 A_1 + \dots + x_m A_m$$
for some constants $A_j\in \mathbb{R}.$

The nondegeneracy condition means that there is  $\Omega_j$ such that $\Omega_j\not\equiv0$ on $X.$
Suppose that $C\cap \mathcal{A}_\omega \neq \emptyset.$ Take a point $p$ in $C\cap \mathcal{A}_\omega$ 
and take an open neighborhood $U\subset C$ of $p.$
Take a function $\phi(x)\in C^\infty (\mathbb{R}^m)$ such that $\phi(x)\geq 0$ for any $x\in \mathbb{R}^m,$  
$\phi(p) > 0$ and $\mathrm{supp}\, \phi \subset U.$
From (\ref{eq.Rd2}) we have 
$$\frac{\partial^2}{\partial x_j\partial x_j} R_\omega|_{C}[\phi d x_1\wedge\dots \wedge d x_{m}]= (-1)^{\frac{m(m-1)}{2}} \frac{1}{2^n} 
\RRe \int_{X\setminus V} \frac{1}{(2 \pi i)^n} \Log^*_\omega (\phi) \Omega_{j}\wedge\overline{\Omega}_{j}.$$
The form $(-1)^{\frac{m(m-1)}{2}} \frac{1}{(2 \pi i)^n} \Omega_{j}\wedge\overline{\Omega}_{j}$
is a nonnegative form of the top degree on $X\setminus V.$ Because  $\Omega_{j}\not\equiv0 $ and $\Omega_{j}$ 
is a holomorphic form on $X\setminus V$, the zero set of $\Omega_{j}\wedge\overline{\Omega}_{j}$ has codimension $\geq 1.$ 
Since $\Log^*_\omega (\phi) \geq 0$ and $\dim \mathrm{supp}\, \Log^*_\omega (\phi) = \dim X,$ we get
$$\frac{\partial^2}{\partial x_j\partial x_j} R_\omega|_{C}[\phi d x_1\wedge\dots \wedge d x_{m}]>0.$$

In the other hand, suppose that  $R_\omega|_{C}$ is an affine function, then $\frac{\partial^2}{\partial x_j\partial x_j} R_\omega|_{C} \equiv 0,$
and 
$$\frac{\partial^2}{\partial x_j\partial x_j} R_\omega|_{C}[\phi d x_1\wedge\dots \wedge d x_{m}]=0.$$ 
Hence $R_\omega$ is not affine in a neighborhood of $p.$

\end{proof}

\begin{remark}
This construction of $R_\omega$ defines the Ronkin function uniquely up to addition of an affine function. 
In fact, in the classical case the Ronkin is also defined up to addition of an affine function, indeed, polynomials  
$F(z)=0$ and $ z^\alpha F(z)=0 $ have identical amoebas, but their Ronkin functions are different, $ R_{z^\alpha F}(x)-R_F(x)= \langle\alpha, x\rangle.$
\end{remark}

Let us show that our definition of the Ronkin function coincides with the standard definition in the classical amoeba case.
First we need to introduce some axillary constructions. 

Let us define the linear map
\begin{equation}\label{eq.tropcomplx}\Theta:\mathcal{E}^{p,q}(\mathbb{R}^m)\rightarrow \mathcal{E}^{p,q}((\mathbb{C}^*)^m)
\end{equation}
as
$$\Theta(f(x_1,\dots,x_n) d x_J\otimes d x_K) = \frac{(i)^{q}}{(2 \sqrt{\pi})^{p+q}} f(\Log(z_1,\dots,z_n)) \frac{ d z_{j_1}}{z_{j_1}}\wedge\dots\wedge\frac{ d z_{j_p}}{z_{j_p}}\wedge
\frac{ d \overline{z}_{k_1}}{\overline{z}_{k_1}}\wedge\dots\wedge\frac{ d \overline{z}_{k_q}}{\overline{z}_{k_q}},$$
where $|J|=p,|K|=q,$ and $\mathcal{E}^{p,q}((\mathbb{C}^*)^m)$ is the space of smooth $(p,q)$-forms on $(\mathbb{C}^*)^m.$
\begin{proposition}\label{pr.thete_prop}

The map $\Theta$ is an algebra homomorphism, in particular, for any two superforms $\psi\in \mathcal{E}^{p,q}(\mathbb{R}^m)$ and 
$\varphi\in \mathcal{E}^{p',q'}(\mathbb{R}^m)$
holds: $$\Theta(\psi \wedge \varphi) = \Theta(\psi) \wedge \Theta(\varphi).$$

Also, the following relations hold:
$$ \Theta (d' \psi) = \frac{1}{\sqrt{\pi}}\partial (\Theta \psi),$$
$$ \Theta ( d'' \psi) = \frac{i}{\sqrt{\pi}}\overline{\partial} ( \Theta \psi),$$ 
$$ \Theta(I \psi)  = (i)^{p+q}\overline{\Theta(\psi)},$$ 
Moreover, let $\omega\in \mathcal{E}^{m,m}(\mathbb{R}^m)$ and $U$ be a domain in $\mathbb{R}^m$. Then
$$\int_{\Log^{-1}(U)} \Theta(\omega) = \int_{U} \omega.$$
\end{proposition}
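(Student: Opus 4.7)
The plan is to verify each of the five identities on basis monomials $f\, dx_J \otimes dx_K$ and then extend by $C^\infty$-linearity. The main challenge is sign bookkeeping of three kinds: the factor $(-1)^{qp'}$ from the tropical wedge convention, the reordering signs from moving bar-differentials past holomorphic ones in $\Lambda^\bullet T^*(\mathbb{C}^*)^m$, and the $(-1)^{m(m-1)/2}$ factors appearing both in the tropical integral and in the orientation of $(\mathbb{C}^*)^m$ as a real manifold.

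For the algebra homomorphism, apply $\Theta$ to $(f\, dx_J \otimes dx_K) \wedge (g\, dx_{J'} \otimes dx_{K'}) = (-1)^{qp'}\, fg\, dx_J \wedge dx_{J'} \otimes dx_K \wedge dx_{K'}$, which vanishes unless $J \cap J' = K \cap K' = \emptyset$. On the complex side, $\Theta(f\, dx_J \otimes dx_K) \wedge \Theta(g\, dx_{J'} \otimes dx_{K'})$ involves the product $(\alpha_J \wedge \overline{\alpha}_K) \wedge (\alpha_{J'} \wedge \overline{\alpha}_{K'})$ where $\alpha_L := \bigwedge_{l \in L} dz_l/z_l$; commuting the degree-$q$ block $\overline{\alpha}_K$ past the degree-$p'$ block $\alpha_{J'}$ produces the matching sign $(-1)^{qp'}$, and the prefactors $i^{q+q'}/(2\sqrt{\pi})^{p+q+p'+q'}$ agree with the prefactor of $\Theta$ applied to the product.

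For the differential identities use the chain rule $d(\Log^{\ast} f) = \tfrac{1}{2}\sum_k \Log^{\ast}(\partial f/\partial x_k)\bigl(\tfrac{dz_k}{z_k} + \tfrac{d\overline{z}_k}{\overline{z}_k}\bigr)$, whose $(1,0)$- and $(0,1)$-parts give $\partial(\Log^{\ast} f)$ and $\overline{\partial}(\Log^{\ast} f)$ respectively. Substituting into $\partial(\Theta \omega)$ for $\omega = f\, dx_J \otimes dx_K$ and using that $\alpha_J \wedge \overline{\alpha}_K$ is closed reproduces $\Theta(d' \omega)$ up to a net factor $\tfrac{1}{\sqrt{\pi}}$, accounting for the $\tfrac{1}{2}$ from differentiating $\log|z_k|$ combined with the extra $(2\sqrt{\pi})$ in the denominator of $\Theta$ when $p$ increases by one. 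The $d''$ case is parallel; the extra $i$ arises from the ratio $i^{q+1}/i^q$ in the definition of $\Theta$ together with the sign $(-1)^p$ built into $d''$. For conjugation, $I$ swaps the two tensor slots of $f\, dx_J \otimes dx_K$ with sign $(-1)^{pq}$, while on the other side $\overline{i^q} = (-i)^q$ and $\overline{\alpha_J \wedge \overline{\alpha}_K} = \overline{\alpha}_J \wedge \alpha_K = (-1)^{pq} \alpha_K \wedge \overline{\alpha}_J$; the identity $i^{p+q}(-i)^q = i^p$ closes the match.

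For the integration formula, reduce to $\omega = f(x)\, dx \otimes dx$ and pass to polar coordinates $z_k = e^{x_k + i\theta_k}$, so that $\tfrac{dz_k}{z_k} \wedge \tfrac{d\overline{z}_k}{\overline{z}_k} = -2i\, dx_k \wedge d\theta_k$. Shuffling $\bigwedge_k \tfrac{d\overline{z}_k}{\overline{z}_k}$ through $\bigwedge_k \tfrac{dz_k}{z_k}$ to pair them into such 2-form blocks costs $(-1)^{m(m-1)/2}$, and the resulting product equals $(-2i)^m\, dx_1 \wedge d\theta_1 \wedge \dots \wedge dx_m \wedge d\theta_m$. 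Combining with the prefactor $i^m / (2\sqrt{\pi})^{2m}$ the factors collapse to give $\Theta(\omega) = \frac{(-1)^{m(m-1)/2}}{(2\pi)^m}\, f\, dx_1 \wedge d\theta_1 \wedge \dots \wedge dx_m \wedge d\theta_m$. Choosing on $\Log^{-1}(U) = U \times (S^1)^m$ the natural complex-manifold orientation given by this very form and integrating out the torus fibre contributes $(2\pi)^m$, leaving $(-1)^{m(m-1)/2} \int_U f\, dx$, which equals $\int_U \omega$ by the definition of the tropical integral. The main obstacle is keeping the orientation of $(\mathbb{C}^*)^m$ consistent with the $(-1)^{m(m-1)/2}$ sign in the tropical integration convention; the computation shows that with the natural complex orientation both signs cancel in precisely the right way.
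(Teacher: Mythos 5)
Your proof is correct, and it is exactly the computation the paper has in mind: the paper's own ``proof'' consists of the single sentence that the verification is a straightforward computation, and your case-by-case check on basis monomials $f\,dx_J\otimes dx_K$ supplies precisely that omitted bookkeeping. All the sign and constant matchings you describe check out, including the cancellation $i^m(-2i)^m=2^m$ against $(2\sqrt{\pi})^{2m}$ and the agreement of the complex orientation of $(\mathbb{C}^*)^m$ with the $(-1)^{m(m-1)/2}$ convention in the tropical integral.
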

\begin{proof} 
The proof is a straightforward computation.
\end{proof}

Let us denote by $[F(z)=0]$ the integration current defined by a hypersurface $F(z)=0,$ where $F(z)$ is a Laurent polynomial in $(\mathbb{C}^*)^m.$
\begin{proposition} Let $F(z)$ be a Laurent polynomial in $(\mathbb{C}^*)^m$ and $R_F(x)$ be its Ronkin function.
Let $\psi\in\mathcal{E}_c^{n,n}(\mathbb{R}^m),$ then
$$ d' d'' R_F [\psi]= \int_{\mathbb{R}^m} d' d'' R_F \wedge \psi = \int_{(\mathbb{C}^*)^m} [F(z)=0]\wedge \Theta(\psi).$$
In particular, if $X\setminus V$ is isomorphic to $X_F=\{ z\in (\mathbb{C}^*)^m: F(z)=0\},$
and $F(z)$ has a zero of degree $1$ along $X_F$ and $\omega_j=\frac{d z_j}{z_j}|_{X_F}.$
Then $R_F$ coincides with $R_{\omega}$ up to an affine function. In other words, the definition of the
Ronkin function of a generalized amoeba is consistent with the classical definition.
\end{proposition}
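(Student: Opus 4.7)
The plan is to compute $d'd''R_F[\psi]$ by transferring everything from $\mathbb{R}^m$ to $(\mathbb{C}^*)^m$ via the map $\Theta$, and then to apply the Poincar\'{e}--Lelong formula. By the sign rules in \S\ref{s.supercurr} one has $(d'd''R_F)[\psi]=R_F[d'd''\psi]$, and since $R_F$ is locally integrable (being convex) this pairing is a genuine integral. Substituting the classical formula (\ref{eq.Rfun}) and swapping the order of integration (Fubini together with the identity $\int_{\Log^{-1}(U)}\Theta(\omega)=\int_U\omega$ from Proposition \ref{pr.thete_prop}) rewrites it as
$$R_F[d'd''\psi]=\int_{(\mathbb{C}^*)^m}\log|F(z)|\,\Theta(d'd''\psi).$$

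Next, the commutation rules $\Theta(d'\psi)=\frac{1}{\sqrt{\pi}}\partial\Theta(\psi)$ and $\Theta(d''\psi)=\frac{i}{\sqrt{\pi}}\overline{\partial}\Theta(\psi)$ of Proposition \ref{pr.thete_prop} yield $\Theta(d'd''\psi)=\frac{i}{\pi}\partial\overline{\partial}\Theta(\psi)$. Integrating by parts in the complex sense (legal because $\Theta(\psi)$ has compact support in a tube $\Log^{-1}(K)$ while $\log|F|$ is locally integrable) and applying the Poincar\'{e}--Lelong formula $\frac{i}{\pi}\partial\overline{\partial}\log|F|=[F(z)=0]$, which is valid since $F$ has simple zeros along $X_F$, gives
$$\int_{(\mathbb{C}^*)^m}\log|F|\cdot\frac{i}{\pi}\partial\overline{\partial}\Theta(\psi)=\int_{(\mathbb{C}^*)^m}[F(z)=0]\wedge\Theta(\psi),$$
which is the first assertion.

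For the identification with the generalized Ronkin function, I unfold $\int_{(\mathbb{C}^*)^m}[F(z)=0]\wedge\Theta(\psi)=\int_{X_F}\Theta(\psi)|_{X_F}$. With $\omega_j=\frac{dz_j}{z_j}|_{X_F}$ the restriction of $\Theta(f_{jk}(x)\,dx[j]\otimes dx[k])$ to $X_F$ expands to $\frac{i^n}{(4\pi)^n}\,\Log^*_\omega(f_{jk})\,\Omega_j\wedge\overline{\Omega}_k$. A short computation verifies the identity of constants $\frac{i^n}{(4\pi)^n}=\frac{(-1)^n}{2^n(2\pi i)^n}$, matching the coefficient in Definition \ref{def.ronkin1}; the $\RRe$ in (\ref{eq.Rc}) is automatic once one sums over the symmetric pair $(j,k)$ and $(k,j)$. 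Thus $d'd''R_F[\psi]$ coincides with the right-hand side of (\ref{eq.Rc}), so $d'd''R_F=d'd''R_\omega$, and hence $R_F-R_\omega$ is affine.

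The main obstacle is bookkeeping rather than conceptual: keeping careful track of signs, powers of $i$, and normalization constants when passing between the tropical framework on $\mathbb{R}^m$ and complex currents on $(\mathbb{C}^*)^m$. The Fubini step and the integration by parts are both standard once one reduces to the compactly supported setting provided by $\psi\in\mathcal{E}_c^{n,n}(\mathbb{R}^m)$, but the coefficient-matching needs to be done meticulously to confirm that the classical Ronkin function is a genuine instance of Definition \ref{def.ronkin1}.
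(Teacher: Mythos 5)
Your proposal is correct and follows essentially the same route as the paper: reduce to $R_F[d'd''\psi]$, transfer the integral to $(\mathbb{C}^*)^m$ via $\Theta$ (the paper carries out the Fubini/orientation bookkeeping explicitly in $(S^1)^m\times\mathbb{R}^m$ coordinates), apply the commutation rules of Proposition \ref{pr.thete_prop} and the Poincar\'{e}--Lelong formula, then restrict to $X_F$ and match the constant $\frac{i^n}{(4\pi)^n}=\frac{(-1)^n}{2^n(2\pi i)^n}$ against Definition \ref{def.ronkin1}. The only cosmetic difference is your justification of the $\RRe$ (which is harmless simply because the left-hand side $R_F[d'd''\psi]$ is manifestly real, rather than by any $(j,k)\leftrightarrow(k,j)$ symmetry of $\psi$).
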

\begin{proof}
First, let us compute $R_F [\varphi]$ for $\varphi=h(x) d x \otimes d x\in \mathcal{E}_c^{m,m}(\mathbb{R}^m).$
By definition, $R_F[\varphi]$ (\ref{eq.Rfun}) is equal to
 $$R_F [\varphi] = (-1)^{\frac{m(m-1)}{2}} \int_{\mathbb{R}^m} \frac{1}{(2 \pi i)^m}(\int_{\Log^{-1}(x)} 
 \log|F(z)| \frac{d z_1}{ z_1} \wedge \dots \wedge \frac{d z_m}{ z_m}) h(x) \; d x ,$$
this iterated integral can be transformed to the integral over the space $( S^1)^m \times \mathbb{R}^m,$
this space is parameterized by the coordinates $(\theta_1,\dots, \theta_m, x_1,\dots,x_m)$,
$\theta_j\in \mathbb{R} / 2 \pi \mathbb{Z} = S^1, x_j\in \mathbb{R},$ 
$$\theta_j=\arg z_j, x_j=\log |z_j|.$$
The orientation of $( S^1)^m \times \mathbb{R}^m$ is defined by the positivity of the differential 
form $\theta_1\wedge \dots \wedge \theta_m\wedge x_1\wedge\dots \wedge x_m.$
There is a natural identification between $(S^1)^m \times \mathbb{R}^m$ and $(\mathbb{C}^*)^m,$ but the orientations of
$(\mathbb{C}^*)^m$ and $(S^1)^m \times \mathbb{R}^m$ are different.
Indeed, on $(\mathbb{C}^*)^m$ we have a natural complex orientation, which is defined by the form $(\frac{i}{2})^m d z_1\wedge 
d \overline{z}_1\wedge \dots\wedge d z_m\wedge d \overline{z}_m,$ then
\begin{multline*}(\frac{i}{2})^m d z_1\wedge d \overline{z}_1\wedge \dots\wedge d z_m\wedge d \overline{z}_m=e^{2 x_1 +\dots + 2 x_m} 
d x_1\wedge d \theta_1 \wedge \dots\wedge d x_m\wedge d \theta_m=\\
 e^{2 x_1 +\dots + 2 x_m} (-1)^{\frac{m(m+1)}{2}} d \theta_1\wedge\dots \wedge d \theta_m\wedge d x_1\wedge\dots \wedge d x_m.\end{multline*}
Since $$d x_j= d \log|z_j|=\frac{1}{2} (\frac{d z_j}{ z_j} + \frac{d \overline{z}_j}{ \overline{z}_j} ),$$ we obtain
\begin{multline*}R_F [\varphi] = (-1)^{\frac{m(m+1)}{2}+\frac{m(m-1)}{2}} \frac{1}{(2 \pi i)^m} \frac{1}{(2)^m} \int_{(\mathbb{C}^*)^m}  h(\Log(z)) \log|F(z)|\frac{d z_1}{ z_1} 
\wedge \dots \wedge \frac{d z_m}{ z_m} \wedge \frac{d \overline{z}_1}{ \overline{z}_1} 
\wedge \dots \wedge \frac{d \overline{z}_m}{ \overline{z}_m}=\\
=\int_{(\mathbb{C}^*)^m} \log|F(z)| \Theta(\varphi).\end{multline*}

From Proposition \ref{pr.thete_prop}  we get 
$$d' d'' R_F[\psi]=R_F[d' d'' \psi]=\int_{(\mathbb{C}^*)^m} \log|F(z)| \Theta(d' d'' \psi)=
\int_{(\mathbb{C}^*)^m} \log|F(z)| \frac{i}{\pi} \partial \overline{\partial} \Theta(\psi).$$
The standard fact from the complex analysis is that $$\frac{i}{\pi}  \partial \overline{\partial} \log|F(z)| = [F(z)=0]$$
Finally, we get $$d' d'' R_F[\psi]= \int_{(\mathbb{C}^*)^m} \frac{i}{\pi} \partial \overline{\partial} \log|F(z)| \wedge \Theta(\psi) = 
\int_{(\mathbb{C}^*)^m} [F(z)=0]\wedge \Theta(\psi).$$

Let us prove the second part of the statement.
Let us denote 
$$\frac{d z_{[j]}}{ z_{[j]}} = \frac{d z_1}{ z_1} 
\wedge\dots  \wedge \frac{d z_{j-1}}{ z_{j-1}} \wedge \frac{d z_{j+1}}{ z_{j+1}} \wedge\dots \wedge \frac{d z_m}{ z_m}.$$
The superform $\psi$ can be written as $$\psi=\sum^m_{j,k=1} h_{jk}(x) d x_{[j]}\otimes d x_{[k]}.$$
Then $\Theta(\psi)$ equals
$$\Theta(\psi)=\frac{i^n}{2^{2n} \pi^n}\sum^m_{j,k=1} h_{jk}(\Log(z)) \frac{d z_{[j]}}{ z_{[j]}} \wedge \frac{d \overline{z}_{[j]}}{ \overline{z}_{[j]}}.$$
Observe that  $$\Omega_j= \frac{d z_{[j]}}{ z_{[j]}}|_{X_F}$$ and $\Log|_{X_F}=\Log_{\omega}.$
Therefor $$d' d'' R_F[\psi]= \int_{(\mathbb{C}^*)^m} [F(z)=0]\wedge \Theta(\psi) = 
 \frac{i^n}{2^{2n} \pi^n} \sum^m_{j,k=1} \int_{X_F} \Log^*_{\omega}(h_{jk}) \Omega_j \wedge \overline{\Omega}_k=d' d''  R_\omega[\psi].$$
\end{proof}

\begin{proposition}\label{pr.conv} Suppose the amoeba $\mathcal{A}_\omega$ satisfies the nondegeneracy condition, 
then connected components of $\mathbb{R}^m\setminus \mathcal{A}_\omega$ are open convex subsets of $\mathbb{R}^m$.
\end{proposition}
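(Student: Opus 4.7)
\medskip\noindent\textbf{Proof proposal.} The plan is to deduce convexity from the Ronkin function machinery already established in Theorem \ref{th.ronkin}. Openness of each component is immediate: $\mathcal{A}_\omega$ is closed by Proposition \ref{pr.closed}, so $\mathbb{R}^m\setminus\mathcal{A}_\omega$ is open and its connected components are open. For convexity I fix a component $C\in\Upsilon$ and invoke Theorem \ref{th.ronkin}, which says that $R_\omega$ is a continuous convex function on $\mathbb{R}^m$ whose restriction to $C$ coincides with some affine function $L(x)=\langle \alpha,x\rangle+\beta$.

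The key preliminary step is to observe that $R_\omega\geq L$ globally on $\mathbb{R}^m$. Indeed, since $R_\omega$ is affine (hence smooth) on the open set $C$, at any point $x_0\in C$ the subdifferential of the convex function $R_\omega$ is the singleton $\{\alpha\}$, and the standard subgradient inequality for convex functions gives
\[
R_\omega(y)\;\geq\;R_\omega(x_0)+\langle\alpha,y-x_0\rangle\;=\;L(y)
\]
for every $y\in\mathbb{R}^m$. Thus $g:=R_\omega-L$ is a nonnegative continuous convex function on $\mathbb{R}^m$.

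Next I introduce the set $S=\{x\in\mathbb{R}^m:R_\omega(x)=L(x)\}=\{g=0\}$. Because $g\geq 0$ is convex, its zero set is a closed convex subset of $\mathbb{R}^m$, and by construction $C\subset S$. Since $C$ is open, $C\subset\mathrm{int}(S)$; and $\mathrm{int}(S)$, being the interior of a convex set, is itself convex and, in particular, connected. On $\mathrm{int}(S)$ the Ronkin function agrees with the affine function $L$, so part (2) of Theorem \ref{th.ronkin} (using the nondegeneracy hypothesis which is in force throughout this section) forces $\mathrm{int}(S)\cap\mathcal{A}_\omega=\emptyset$.

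Therefore $\mathrm{int}(S)$ is a connected subset of $\mathbb{R}^m\setminus\mathcal{A}_\omega$ containing $C$; since $C$ is by definition a connected component of this complement, $\mathrm{int}(S)\subseteq C$, and combined with $C\subseteq\mathrm{int}(S)$ this yields $C=\mathrm{int}(S)$. The interior of a convex set is convex, so $C$ is convex, as desired. The only subtle point I anticipate is the justification of the bound $R_\omega\geq L$; this relies crucially on the fact, already proven in Theorem \ref{th.ronkin}, that $R_\omega$ is genuinely a continuous convex function on all of $\mathbb{R}^m$ (not merely a current), which makes the subgradient inequality available. Everything else is soft convex-geometry.
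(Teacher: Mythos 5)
Your proof is correct and follows essentially the same route as the paper: both arguments use the subgradient inequality to show $R_\omega$ dominates the affine function $L$ globally, and then apply part (2) of Theorem \ref{th.ronkin} to a convex open set containing $C$ on which $R_\omega$ is affine, forcing that set to coincide with $C$. The only cosmetic difference is that the paper works with the convex hull of $C$ while you work with the interior of the contact set $\{R_\omega=L\}$; these play the same role.
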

\begin{proof}
By Proposition \ref{pr.closed} $\mathcal{A}_\omega$ is closed, whence $\mathbb{R}^m\setminus \mathcal{A}_F$ is an open set.
Let $C$ be a connected component of $\mathbb{R}^m\setminus \mathcal{A}_\omega.$ By  Theorem \ref{th.ronkin} the restriction of the 
Ronkin function $R_\omega$ to $C$ is an affine function. Take a point $x_0\in C,$ then
$$\phi(x)=R_\omega(x) -  \langle\nabla R_\omega(x_0), x- x_0\rangle - R_\omega(x_0)$$ is a 
convex function, $\phi(x)\geq 0$ for any $x\in \mathbb{R}^m,$ and $\phi(x)=0$ for any $x \in C.$

Let $\hat{C}$ be a convex hull of $C.$ Since $C$ is open, $\hat{C}$ is also open. Since $\phi(x)$ is a convex function, $\phi(x)\geq 0$ and $\phi(x)\equiv 0$ on $C$, we get 
 $\phi(x)\equiv 0$ on $\hat{C},$ thus  $R_\omega$ is affine function on $\hat{C}.$ By Theorem \ref{th.ronkin} 
the restriction of $R_\omega(x)$ to $\hat{C}$ is affine linear if and only if $\hat{C}$ does not
intersect the amoeba $\mathcal{A}_\omega$, this means that $\hat{C}=C,$ i.e., $C$ is convex.
\end{proof}
\begin{proposition}\label{pr.nondeg2} Let $m$ be equal to $n+1$, then the amoeba $\mathcal{A}_\omega$ satisfies 
the nondegeneracy condition if and only if $\dim \Sigma_\omega$ is equal to $n$.
\end{proposition}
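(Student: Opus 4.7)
The direction $\dim\Sigma_\omega = n \Rightarrow$ nondegeneracy is already contained in Proposition \ref{pr.nondeg}, so I need only address the reverse implication for $m=n+1$. My plan is to combine the asymptotic control of $\mathcal{A}_\omega$ provided by Proposition \ref{pr.fan} with the rigidity of the Ronkin function established in Theorem \ref{th.ronkin}, arguing by contradiction.

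Suppose $\mathcal{A}_\omega$ is nondegenerate but $\dim\Sigma_\omega \leq n-1 = m-2$. Proposition \ref{pr.fan} furnishes a constant $c>0$ with $\mathcal{A}_\omega\subset U_c(|\Sigma_\omega|)$. Since $|\Sigma_\omega|$ is a finite union of closed cones of real dimension at most $m-2$, it has codimension $\geq 2$ in $\mathbb{R}^m$, so standard topology gives that $W:=\mathbb{R}^m\setminus U_c(|\Sigma_\omega|)$ is path-connected. As $W\subset\mathbb{R}^m\setminus\mathcal{A}_\omega$ is connected, Theorem \ref{th.ronkin}(2) forces $R_\omega|_W$ to agree with a single affine function $\langle\alpha,x\rangle+c_0$. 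Setting $h(x)=R_\omega(x)-\langle\alpha,x\rangle-c_0$, convexity of $R_\omega$ combined with the fact that $\alpha$ is a subgradient at every point of $W$ gives $h\geq 0$ globally, while $h\equiv 0$ on $W$.

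The crux is then a one-dimensional convexity argument. For any $x_0\in\mathbb{R}^m$, pick a nonzero $v$ with both $v$ and $-v$ outside $|\Sigma_\omega|$; since each cone of $|\Sigma_\omega|$ is of codimension $\geq 2$, such directions form a dense open set in $\mathbb{R}^m\setminus\{0\}$. Because $|\Sigma_\omega|$ is a closed cone not containing $\pm v$, one checks $d(x_0+tv,|\Sigma_\omega|)\to\infty$ as $|t|\to\infty$, so the line $\ell(t)=x_0+tv$ lies in $W$ for $|t|$ sufficiently large. The restriction $t\mapsto h(\ell(t))$ is a nonnegative convex function on $\mathbb{R}$ vanishing outside a compact interval, and an elementary midpoint estimate forces it to be identically zero. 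In particular $h(x_0)=0$, and since $x_0$ was arbitrary, $h\equiv 0$, that is, $R_\omega$ is globally affine on $\mathbb{R}^m$.

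This contradicts Theorem \ref{th.ronkin}(2) applied to the connected open set $C=\mathbb{R}^m$: since $X\setminus V$ is nonempty, the amoeba $\mathcal{A}_\omega=\Log_\omega(X\setminus V)$ is nonempty, hence $C$ meets $\mathcal{A}_\omega$, and under nondegeneracy this prevents $R_\omega$ from being affine on $C$. The step I expect to be most delicate is the topological one—rigorously verifying that the tubular-neighborhood complement $W$ is connected and that generic lines escape $U_c(|\Sigma_\omega|)$ at infinity—but both hinge on the codimension-$\geq 2$ bound together with the cone structure of $|\Sigma_\omega|$, and should go through cleanly.
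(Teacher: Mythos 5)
Your proof is correct and follows essentially the same route as the paper: both arguments use Proposition \ref{pr.fan} together with the connectivity of the complement of a tube around the codimension-$\geq 2$ set $|\Sigma_\omega|$ to force, via convexity, that $\mathcal{A}_\omega$ would have to be empty, contradicting $\mathcal{A}_\omega\neq\emptyset$. The only cosmetic difference is that the paper cites Proposition \ref{pr.conv} (convexity of the complement components) plus the fact that the tube complement has convex hull equal to all of $\mathbb{R}^m$, whereas you run the equivalent convexity argument directly on the Ronkin function $R_\omega$ by restricting to generic lines.
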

\begin{proof}
The ``only if'' part of the proposition is a part of Proposition \ref{pr.nondeg}. If $X$ is K\"{a}hler 
manifold then the ``if'' part is also a part of Proposition \ref{pr.nondeg}.
So we need to show the ``if'' part without the K\"{a}hler assumption.

Suppose that $\dim \Sigma_\omega<n$ and $\mathcal{A}_\omega$ satisfies the nondegeneracy condition. 
By Proposition \ref{pr.fan}, there is a constant $c>0$ such that $\mathcal{A}_\omega$ is a subset of 
$c$-neighborhood $U_c(|\Sigma_\omega|)$ of $|\Sigma_\omega|.$
Because $\dim \Sigma_\omega<n,$ the set $\mathbb{R}^m\setminus U_c(\Sigma_\omega) \subset \mathbb{R}^m \setminus \mathcal{A}_\omega$ is connected.
It is easy to check that 
the convex hull of $\mathbb{R}^m\setminus U_c(\Sigma_\omega) $ is equal to $\mathbb{R}^m$. Since the amoeba satisfies the nondegeneracy condition, 
from 
Proposition  \ref{pr.conv} it follows that connected components of $\mathbb{R}^m \setminus \mathcal{A}_\omega$ are convex. 
Therefor $\mathbb{R}^m \setminus \mathcal{A}_\omega = \mathbb{R}^m,$ and
$\mathcal{A}_\omega=\emptyset.$ In the other hand, by construction, we have  $\mathcal{A}_\omega\neq \emptyset$. This contradiction proves the proposition. \end{proof}

\subsection{Order map, Newton polytope and recession cones of complement to amoeba.} 
We denote the set of connected components of $\mathbb{R}^m \setminus \mathcal{A}_\omega$ by $\Upsilon.$
\begin{definition}
Let us define the map 
$$\nu_\omega:\Upsilon \rightarrow \mathbb{R}^m$$
as follows.
If $C$ is a connected component of $\mathbb{R}^m\setminus \mathcal{A}_\omega$ and $x_C$ is any point in $C$, then we define $\nu_\omega(C)$
to be equal to $\nabla R_\omega (x_C).$ Because $R_\omega$ is an affine 
function on each connected component of $\mathbb{R}^m\setminus\mathcal{A}_\omega,$ its gradient 
is constant on each connected component of $\mathbb{R}^m\setminus\mathcal{A}_\omega.$ Thus map  $\nu_\omega(C)$ is well-defined.
This map is called the \emph{order map} of the amoeba $\mathcal{A}_\omega$. 
\end{definition}

\begin{definition} The \emph{Newton polytope} of the amoeba $\mathcal{A}_\omega$ is a convex hull of the image of the 
order map $\nu_\omega$. We denote the Newton polytope by $N_\omega.$
\end{definition}

\begin{definition} The \emph{relative interior} $\mathrm{ri \;}$ C of a set $C$ is its interior within the affine hull of $C$.
The affine hull of $C$ is the smallest affine space containing $C.$
\end{definition}

\begin{definition}\label{df.grad}  If $f(x)$ is a convex function in $\mathbb{R}^m$, then by the gradient
of $f(x)$ at $x_0$ we will mean the set
$$ \nabla f(x_0)=\{y\in \mathbb{R}^m: f(x)-f(x_0)\geq\langle y,x-x_0\rangle, \forall x\in \mathbb{R}^m\}.$$
When $f(x)$ is differentiable at $x_0$, $ \nabla f(x_0)$ consists of a single point which is just
the usual gradient of $f(x)$. In the convex analysis  $ \nabla f(x)$ is also called the subdifferential of the function $f(x)$. 
\end{definition}

\begin{theorem}\label{th.rec}
The order map is injective.
Let $C$ be a connected component of $\mathbb{R}^m\setminus \mathcal{A}_\omega.$ 
Then the recession cone of $C$ is equal to the normal cone of the Newton 
polytope $N_\omega$ at the point $\nu_\omega(C).$ 
\end{theorem}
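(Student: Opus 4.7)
The plan is to prove injectivity first, then the recession--normal identity via two inclusions; throughout write $\nu=\nu_\omega(C)$ and fix $x_0\in C$, setting $a=R_\omega(x_0)-\langle\nu,x_0\rangle$.

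For injectivity, suppose $\nu_\omega(C_1)=\nu_\omega(C_2)=\nu$. The convex function $g(x)=R_\omega(x)-\langle\nu,x\rangle$ has vanishing gradient on each open set $C_i$, so each $C_i$ lies in the global argmin of $g$, and the constant values of $g$ on $C_1$ and $C_2$ coincide. The argmin is convex and therefore contains the open convex hull $H$ of $C_1\cup C_2$; on $H$ the Ronkin function is affine, so Theorem~\ref{th.ronkin} forces $H\cap\mathcal{A}_\omega=\emptyset$, placing $C_1$ and $C_2$ in the same connected component, i.e.\ $C_1=C_2$.

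For $\mathrm{recc}(C)\subset\mathrm{norm}_{N_\omega}(\nu)$, take $v\in\mathrm{recc}(C)$ and any other order $\nu'=\nu_\omega(C')$ with base point $x_0'\in C'$. The subgradient inequality for the convex $R_\omega$ gives $R_\omega(x_0+tv)\geq R_\omega(x_0')+\langle\nu',x_0+tv-x_0'\rangle$, while the left side equals $R_\omega(x_0)+t\langle\nu,v\rangle$ because the ray stays in $C$. Dividing by $t$ and letting $t\to\infty$ yields $\langle\nu,v\rangle\geq\langle\nu',v\rangle$; since the vertices of $N_\omega$ are orders of components, $v\in\mathrm{norm}_{N_\omega}(\nu)$.

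For the reverse inclusion, set $E=\{x\in\mathbb{R}^m : R_\omega(x)=\langle\nu,x\rangle+a\}$, the zero set of the nonnegative convex function $R_\omega-\langle\nu,\cdot\rangle-a$. The interior $\mathrm{int}(E)$ is open, connected, convex, contains $C$, and carries $R_\omega$ as an affine function, so by Theorem~\ref{th.ronkin} $\mathrm{int}(E)\cap\mathcal{A}_\omega=\emptyset$; since $\mathrm{int}(E)$ is connected and $C$ is a connected component of $\mathbb{R}^m\setminus\mathcal{A}_\omega$, we get $\mathrm{int}(E)=C$ and $E=\overline{C}$. Hence $\mathrm{recc}(C)=\mathrm{recc}(E)=\{v : \sigma(v)\leq\langle\nu,v\rangle\}$, where $\sigma(v):=\lim_{t\to\infty} R_\omega(x+tv)/t$ is the recession function of $R_\omega$ (convex, possibly $+\infty$, independent of $x$). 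The problem thus reduces to the identification $\sigma=h_{N_\omega}$, the support function of $N_\omega$. The bound $\sigma\geq h_{N_\omega}$ is immediate from the affine minorants $R_\omega\geq\langle\nu',\cdot\rangle+a_{C'}$ supplied by each component (together with $N_\omega=\mathrm{conv}\,\nu_\omega(\Upsilon)$). For the reverse bound I invoke Proposition~\ref{pr.fan}: outside a $c$-neighborhood of $|\Sigma_\omega|$ every point sits in some component $C_j$ on which $R_\omega=\langle\nu_j,\cdot\rangle+a_j$ is affine, so for any direction $v\notin|\Sigma_\omega|$ the ray $x_0+tv$ eventually enters such a chamber and $R_\omega(x_0+tv)/t\to\langle\nu_j,v\rangle\leq h_{N_\omega}(v)$. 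The complement of $|\Sigma_\omega|$ is dense, so the convex effective domain of $\sigma$ is all of $\mathbb{R}^m$; convexity then makes $\sigma$ continuous, extending the inequality $\sigma\leq h_{N_\omega}$ to every $v$. Applied at $v\in\mathrm{norm}_{N_\omega}(\nu)$, where $h_{N_\omega}(v)=\langle\nu,v\rangle$, this gives $v\in\mathrm{recc}(C)$. The main obstacle is precisely this chamber-wise upper bound: one must show that for $v\notin|\Sigma_\omega|$ the ray $x_0+tv$ really does settle into a single unbounded component of the complement, which is where Proposition~\ref{pr.fan} enters, and then promote the pointwise bound from the dense open set $\mathbb{R}^m\setminus|\Sigma_\omega|$ to all of $\mathbb{R}^m$ by convexity.
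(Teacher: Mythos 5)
Your argument is correct, and while it shares the paper's skeleton (injectivity via convexity plus Theorem~\ref{th.ronkin}; the two inclusions between $\mathrm{recc}(C)$ and $\mathrm{norm}_{N_\omega}(\nu_\omega(C))$, with Proposition~\ref{pr.fan} as the essential external input), the way you execute the recession-cone identity is genuinely different. The injectivity step is the same as the paper's. For $\mathrm{recc}(C)\subset\mathrm{norm}_{N_\omega}(\nu_\omega(C))$ you use only the subgradient inequality along the ray, which is shorter than the paper's route through Lemma~\ref{lm.2} (every direction lies in the recession cone of some full-dimensional component) and Lemma~\ref{lm.1} (components sharing a recession direction have equal directional derivatives). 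For the reverse inclusion the paper relies on Proposition~\ref{pr.grad} ($\im \nabla R_\omega\subset N_\omega$) together with monotonicity of $t\mapsto \nabla_v R_\omega(x+tv)$, whereas you identify $\overline{C}$ with the sublevel set $\{R_\omega-\langle\nu,\cdot\rangle-a\le 0\}$ (using Theorem~\ref{th.ronkin} and Proposition~\ref{pr.conv} exactly as the paper does to show $\mathrm{int}(E)=C$) and then reduce everything to the single identity $\sigma=h_{N_\omega}$ between the recession function of $R_\omega$ and the support function of $N_\omega$; the lower bound comes from the affine minorants and the upper bound from Proposition~\ref{pr.fan} plus the fact that $\dim|\Sigma_\omega|=n<m$ (Proposition~\ref{pr.nondeg2}) makes $\mathbb{R}^m\setminus|\Sigma_\omega|$ dense, after which convexity and continuity of $\sigma$ finish the job. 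Your packaging is more economical and makes the duality $\mathrm{recc}(C)=\{v:h_{N_\omega}(v)=\langle\nu,v\rangle\}$ transparent (indeed it yields both inclusions at once); the paper's decomposition is longer but produces reusable intermediate statements -- Lemma~\ref{lm.2} also drives the boundedness of $N_\omega$ and Proposition~\ref{pr.grad}, which is later needed for the total mass of the Monge--Amp\`ere measure. The one point you rightly flag as the crux -- that a ray in a direction $v\notin|\Sigma_\omega|$ eventually settles in a single unbounded chamber -- does hold, because $|\Sigma_\omega|$ is a finite union of closed cones, so $\mathrm{dist}(x_0+tv,|\Sigma_\omega|)$ grows linearly in $t$ and the tail of the ray is a connected subset of $\mathbb{R}^m\setminus\mathcal{A}_\omega$.
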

We need several technical statements to prove this theorem. 

\begin{lemma}\label{lm.2}
Let $v$ be a vector in $\mathbb{R}^m.$ Then there is a connected component $C$ of $\mathbb{R}^m\setminus \mathcal{A}_\omega$ 
such that $v\in \mathrm{recc}(C),$ $\dim \mathrm{recc}(C) = m,$
and $v\in \mathrm{norm}_{N_\omega}(\nu_\omega(C)).$
\end{lemma}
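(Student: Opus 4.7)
I plan to identify $C$ as a component whose order $\nu_\omega(C)$ is a vertex of $N_\omega$ exposed by $v$, and then to use convexity of the Ronkin function $R_\omega$ to force $\mathrm{recc}(C)$ to contain the entire normal cone of $N_\omega$ at that vertex, which has dimension $m$.

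More precisely, set $L_v = \sup_{y\in N_\omega}\langle y,v\rangle$ and let $F_v = \{y\in N_\omega : \langle y,v\rangle = L_v\}$ be the face exposed by $v$. I pick $y^*$ to be an extreme point of $F_v$; such a point is automatically a vertex of $N_\omega$, so $\mathrm{norm}_{N_\omega}(y^*)$ has dimension $m$ and contains $v$. Because $N_\omega$ is by definition the convex hull of $\nu_\omega(\Upsilon)$, the vertex $y^*$ lies in the closure of $\nu_\omega(\Upsilon)$; I will argue that in fact $y^* = \nu_\omega(C)$ for some $C\in\Upsilon$.

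For this $C$, consider the closed convex set
$$K = \{x\in\mathbb{R}^m : R_\omega(x) - \langle y^*,x\rangle = R_\omega(x_C) - \langle y^*,x_C\rangle\},$$
where $x_C$ is any point of $C$. Since $R_\omega|_C$ is affine with gradient $y^*$ by Theorem \ref{th.ronkin}, we have $C\subset K$. The defining equation of $K$ forces $y^*\in\nabla R_\omega(x)$ for every $x\in K$, so for $w\in\mathrm{norm}_{N_\omega}(y^*)$ and $t\geq 0$ I get the lower bound $R_\omega(x+tw)-R_\omega(x)\geq t\langle y^*,w\rangle$ (from $y^*\in\nabla R_\omega(x)$) and the matching upper bound $R_\omega(x+tw)-R_\omega(x)\leq t\sup_{y\in N_\omega}\langle y,w\rangle = t\langle y^*,w\rangle$ (since the sub-gradients of $R_\omega$ lie in $N_\omega$ and $w$ is in the normal cone at $y^*$). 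Equality forces $x+tw\in K$, hence $\mathrm{norm}_{N_\omega}(y^*)\subset\mathrm{recc}(K)$. Moreover $R_\omega$ coincides with a single affine function on all of $K$, so by Theorem \ref{th.ronkin} the open connected set $\mathrm{int}(K)$ is disjoint from $\mathcal{A}_\omega$ and is contained in a single connected component; since it contains $C$, I get $C = \mathrm{int}(K)$ and therefore $\mathrm{recc}(C) = \mathrm{recc}(K) \supset \mathrm{norm}_{N_\omega}(y^*)$. This cone has dimension $m$ and contains $v$, and $v\in\mathrm{norm}_{N_\omega}(\nu_\omega(C))$ holds by the choice of $y^*$; all three conclusions follow.

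The main obstacle is the first step, namely confirming that the chosen vertex $y^*$ is actually attained by some component, i.e.\ lies in $\nu_\omega(\Upsilon)$ rather than only in its closure. This is not formal from the definition $N_\omega = \mathrm{conv}\,\nu_\omega(\Upsilon)$ and requires an independent input: one should combine the asymptotic description of $\mathcal{A}_\omega$ from Proposition \ref{pr.fan} with a local-finiteness argument for the linearity regions of the convex function $R_\omega$ (for instance by covering a bounded neighborhood of $N_\omega$ and using that each linearity region has nonempty interior) to conclude that $\Upsilon$ is finite, or at least that $\nu_\omega(\Upsilon)$ is closed in $N_\omega$.
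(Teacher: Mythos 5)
Your proposal runs in the wrong logical direction and, as written, is circular at two points. First, you start from a vertex of $N_\omega$ exposed by $v$ and try to produce the component $C$ from it. But at the stage where Lemma \ref{lm.2} is proved, nothing is known about $N_\omega$ beyond its definition as the convex hull of $\nu_\omega(\Upsilon)$: it is not known to be bounded (boundedness is the corollary the paper deduces \emph{from} this lemma), and it is not known to be a polytope ($\Upsilon$ may be infinite; the paper explicitly leaves open the problem of bounding the number of components). Hence the supremum $L_v$ need not be attained, an exposed face of an unbounded closed convex set need not have extreme points, and an extreme point of a general convex set need not have a full-dimensional normal cone (think of a boundary point of a disk). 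Second, your upper bound $R_\omega(x+tw)-R_\omega(x)\leq t\sup_{y\in N_\omega}\langle y,w\rangle$ invokes $\im \nabla R_\omega\subset N_\omega$, which is part of Proposition \ref{pr.grad}; but the paper proves the relevant inclusion $G\subset N_\omega$ there precisely by appealing to Lemma \ref{lm.2}, so you cannot use it here without an independent argument. Finally, the step you yourself flag --- that the chosen extreme point lies in $\nu_\omega(\Upsilon)$ and not merely in the closure of its convex hull --- is the crux and is left unproved; the local-finiteness argument you sketch is not carried out and is not obviously available.

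The paper's proof goes the other way and avoids all of this. Since the amoeba satisfies the nondegeneracy condition, $\dim\Sigma_\omega=n<m$, so $|\Sigma_\omega|$ has dense open complement whose connected components are full-dimensional open cones; one picks such a cone $K$ with $v\in\overline{K}$. By Proposition \ref{pr.fan} a translate $K+x$ lies in a single component $C$ of $\mathbb{R}^m\setminus\mathcal{A}_\omega$, which immediately gives $K\subset\mathrm{recc}(C)$, hence $\dim\mathrm{recc}(C)=m$ and, by closedness of recession cones of closed convex sets (Rockafellar), $v\in\mathrm{recc}(C)$. Then for any $u\in K$ and any component $C'$, convexity of $t\mapsto R_\omega(y+tu)$ together with the fact that the ray eventually enters $C$ (where $R_\omega$ is affine with gradient $\nu_\omega(C)$) gives $\langle\nu_\omega(C'),u\rangle\leq\langle\nu_\omega(C),u\rangle$; letting $u\to v$ and using only that $N_\omega$ is by definition the convex hull of the points $\nu_\omega(C')$ yields $v\in\mathrm{norm}_{N_\omega}(\nu_\omega(C))$. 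To salvage your route you would need independent proofs that $N_\omega$ is a bounded polytope, that $\im\nabla R_\omega\subset N_\omega$, and that every vertex of $N_\omega$ is attained by the order map --- each of which is, in the paper, a consequence of this very lemma.
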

\begin{proof}

Because $\dim \Sigma_\omega=n< m$, $\mathbb{R}^m\setminus |\Sigma_\omega|$ is an open dense subset of $\mathbb{R}^m$. 
Thus there is a connected component $K$ of $\mathbb{R}^m\setminus |\Sigma_\omega|$ such that $v$ 
is contained in the closure $\overline{K}$ of $K.$
Let $\{v_j\}^\infty_{j=1}$ be a sequence of vectors in $K$ such that $||v-v_j||\rightarrow 0$ as $j\rightarrow + \infty.$

By Proposition \ref{pr.fan}, there is a connected component $C$ of $\mathbb{R}^m\setminus \mathcal{A}_\omega$ 
such that $K+x\subset C$ for some $x\in \mathbb{R}^m$ 
(we denote by $K+x$ the set $K$ shifted by a constant vector $x$). Since $K$ is a cone and $K+x\subset C$, $K$ is a subset of $\mathrm{recc} C.$ 
Because $\dim K = m $, we get $\dim \mathrm{recc} C = m.$
Therefore for any $v\in \mathbb{R}^m$ there is a connected component $C$ such that $v\in \mathrm{recc}(C)$ and $\dim \mathrm{recc}(C) = m.$

\begin{theorem}[\cite{Rock}, Theorem 8.2 and Theorem 8.3]
Let $A$ be a non-empty closed convex set in $\mathbb{R}^m.$ Then $\mathrm{recc} A$ is closed and  $\mathrm{recc} A =\mathrm{recc} (\mathrm{ri} A).$
\end{theorem}
 
Let us apply this theorem to the closure $\overline{C}$ of the set $C.$ Since $\mathrm{ri} \overline{C} = C$, the recession cone 
of $C$  is closed. 
Because $v_j\rightarrow v$ as $j\rightarrow +\infty,$ we obtain $v\in \mathrm{recc} (C).$ We proved that for any $v\in\mathbb{R}^m$ there is a connected component 
$C$ in $\mathbb{R}^m\setminus \mathcal{A}_\omega$  such that $v\in \mathrm{recc}(C)$ and $\dim \mathrm{recc}(C) = m.$

Let us prove that $v\in \mathrm{norm}_{N_\omega}(\nu_\omega(C)).$
Let $u$ be a vector in $ K.$ Observe that for any point $y\in \mathbb{R}^m$ there is a constant $t_y\geq 0$ such that for any $t\geq t_y$ holds 
$t u+  y \in K+x\subset C.$ Consider the function $F_{y,u}(t)=R_\omega(t u + y),$ this function is convex, 
hence $\frac{\partial F_{y,u}}{\partial t}(t_1)\geq \frac{\partial F_{y,u}}{\partial t}(t_0)$ for $t_1\geq t_0.$

Because, for $t\geq t_y,$ the ray $t u + y$ does not intersect $\mathcal{A}_\omega$, the function $F_{y,u}(t)$ is affine function on $[t_y, +\infty).$
Thus $\frac{\partial F_{y,u}}{\partial t}(t_y)\geq \frac{\partial F_{y,u}}{\partial t}(t)$ for any $t.$
Observe that 
$\frac{\partial F_{y,u}}{\partial t}(t_0) = \langle\nabla R_\omega (t_0 u + y), u \rangle= \nabla_u R_\omega (t_0 u + y).$
Whence $\frac{\partial F_{y,u}}{\partial t}|_{t=t_y} = \langle \nu_\omega(C), u \rangle.$

Let $C'$ be a connected component of $\mathbb{R}^m\setminus \mathcal{A}_\omega$ and $y$ be a point in $ C'.$ Then 
$$\langle \nu_\omega(C'), u \rangle =  \frac{\partial F_{y,u}}{\partial t}(0) 
\leq \frac{\partial F_{y,u}}{\partial t}(t_y) = \langle \nu_\omega(C), u \rangle. $$
Put $u=v_j,$ then for any $j$ and any connected component $C'$ we get $$\langle \nu_\omega(C'), v_j \rangle \leq \langle \nu_\omega(C), v_j \rangle.$$ 
Since  $v_j\rightarrow v$ as $j\rightarrow +\infty,$ we get 
$$\langle \nu_\omega(C'), v \rangle \leq \langle \nu_\omega(C), v \rangle.$$
The last inequity is equivalent to $$\langle \nu_\omega(C')- \nu_\omega(C), v \rangle \leq 0.$$
Because $N_\omega$ is a convex hull of $\nu_\omega(C'), C'\in \Upsilon,$ we obtain $v\in \mathrm{norm}_{N_\omega}(\nu_\omega(C)).$
\end{proof}

\begin{cor}
The Newton polytope $N_\omega$ is bounded.  
\end{cor}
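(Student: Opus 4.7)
The plan is to deduce boundedness directly from Lemma \ref{lm.2} together with a standard fact from convex analysis: a nonempty convex set $K \subset \mathbb{R}^m$ is bounded if and only if every linear functional on $\mathbb{R}^m$ is bounded above on $K$ (equivalently, $\mathrm{recc}(K) = \{0\}$).

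First I would fix an arbitrary vector $v \in \mathbb{R}^m$ and apply Lemma \ref{lm.2} to obtain a connected component $C \in \Upsilon$ with $v \in \mathrm{norm}_{N_\omega}(\nu_\omega(C))$. Unpacking the definition of the normal cone, this says
$$\langle y - \nu_\omega(C), v\rangle \leq 0 \quad \text{for every } y \in N_\omega,$$
i.e.\ the linear functional $y \mapsto \langle y, v\rangle$ attains its supremum on $N_\omega$ at the point $\nu_\omega(C)$. In particular it is bounded above on $N_\omega$.

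Since $v$ was arbitrary, every linear functional on $\mathbb{R}^m$ is bounded above on $N_\omega$. I would then conclude by observing that this forces $\mathrm{recc}(N_\omega) = \{0\}$: if some nonzero $v$ lay in the recession cone, then for any fixed $y_0 \in N_\omega$ the ray $y_0 + t v$ would lie in $N_\omega$ for all $t \geq 0$, giving $\langle y_0 + tv, v\rangle = \langle y_0, v\rangle + t\|v\|^2 \to +\infty$, contradicting the bound just obtained. Hence $N_\omega$ has trivial recession cone and is therefore bounded.

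There is no real obstacle here; the content of the corollary is already packaged inside Lemma \ref{lm.2}, and the proof amounts to translating the statement $v \in \mathrm{norm}_{N_\omega}(\nu_\omega(C))$ into an upper bound for the linear functional $\langle \cdot, v\rangle$ on $N_\omega$ and invoking the standard characterization of bounded convex sets.
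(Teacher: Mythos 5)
Your argument is correct and follows essentially the same route as the paper: both deduce from Lemma \ref{lm.2} that each linear functional $\langle\cdot,v\rangle$ is bounded above on $N_\omega$ via the normal-cone condition (the paper simply economizes by using only the $m+1$ vectors $v_1,\dots,v_m,-v_1-\dots-v_m$ and intersecting the resulting half-spaces). One small caution: your closing detour through the recession cone is unnecessary and, as literally stated, slightly shaky --- ``$\mathrm{recc}(K)=\{0\}$ implies $K$ bounded'' requires $K$ to be closed, which is not immediate for the convex hull of a possibly infinite set of order values --- but this is harmless since applying your upper bound to $v=\pm e_i$ already bounds every coordinate of $N_\omega$ directly.
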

\begin{proof}
Consider a set of vectors $v_1,\dots,v_{m+1}\in\mathbb{R}^m$ such that  $v_1,\dots,v_m$ is an orthonormal basis and  $v_{m+1}= -v_1-\dots-v_m.$
Then by Lemma \ref{lm.2} there is a connected component $C_j$ of $\mathbb{R}^m\setminus \mathcal{A}_\omega$ 
such that $v_j\in  \mathrm{norm}_{N_\omega}(\nu_\omega(C_j))$ for any $j.$
This means that $$N_\omega\subset \bigcap^{m+1}_{j=1}\{x\in \mathbb{R}^m: \langle x, v_j \rangle \leq d_j \}$$
for some numbers $d_j.$  The latter set is bounded for any choice of $d_j.$
\end{proof}

\begin{lemma}\label{lm.1}  Let $f(x)$ be a convex function on $\mathbb{R}^m.$
Given two open convex sets $C$ and $C'$ such that the restrictions $f(x)|_{C}$ and $f(x)|_{C'}$ are affine functions.
Suppose there is a vector $v$ such that $v$ belongs to recession cones of sets $C$ and $C',$ i.e., $v\in \mathrm{recc}(C)$ and $v\in\mathrm{recc}(C').$
Then $\nabla_v f(x)=\nabla_v f(x')$ for any $x\in C$ and $x'\in C',$ where  $\nabla_v f(x)$ is the directional derivative of $f(x)$  in  the direction $v.$ 
\end{lemma}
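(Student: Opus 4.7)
The plan is to use the subgradient inequality characterizing convex functions, applied across the two components along rays in direction $v$.

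First, since $f|_C$ is affine on the open set $C$, the function $f$ is differentiable at every point of $C$ with a constant gradient $\beta\in\mathbb{R}^m,$ so that $\nabla_v f(x) = \langle\beta,v\rangle =:a$ for every $x\in C.$ Similarly $\nabla_v f(x')=\langle\beta',v\rangle=:a'$ for every $x'\in C'.$ The goal is to show $a=a'.$ Because $C$ is open and convex and $v\in\mathrm{recc}(C),$ iterating the definition of recession cone gives that the whole ray $\{x+tv:t\geq 0\}$ lies in $C,$ hence
$$f(x+tv)=f(x)+ta \qquad\text{for all }t\geq 0,$$
and analogously $f(x'+tv)=f(x')+ta'$ for all $t\geq 0.$

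Next I would invoke the supporting hyperplane (subgradient) inequality at $x':$ since $f$ is convex on $\mathbb{R}^m$ and has gradient $\beta'$ at $x'\in C',$
$$f(y)\geq f(x')+\langle\beta',y-x'\rangle\qquad\text{for all }y\in\mathbb{R}^m.$$
Specializing to $y=x+tv$ with $t\geq 0$ and using the explicit formula $f(x+tv)=f(x)+ta$ from the previous step, we obtain
$$f(x)+ta \;\geq\; f(x')+\langle\beta',x-x'\rangle + ta'\qquad\text{for all }t\geq 0,$$
which is equivalent to $t(a-a')\geq f(x')-f(x)+\langle\beta',x-x'\rangle$ for all $t\geq 0.$ Letting $t\to+\infty$ forces $a\geq a'.$

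Finally, I would run the symmetric argument: apply the subgradient inequality at $x$ (with gradient $\beta$) to the point $y=x'+tv$ and use $f(x'+tv)=f(x')+ta',$ obtaining $a'\geq a.$ Combining the two inequalities gives $a=a',$ which is exactly the claim $\nabla_v f(x)=\nabla_v f(x').$ There is no real obstacle here beyond recognizing that the convexity of $f$ on all of $\mathbb{R}^m$ (not merely on $C\cup C'$) is precisely what lets information travel between the two components, via the global subgradient inequality applied along the common recession direction.
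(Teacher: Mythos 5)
Your proof is correct and is essentially the paper's argument: both exploit global convexity via the supporting-line (subgradient) inequality applied between the two components along the direction $v$, and then let the ray parameter $t\to+\infty$ to force the two slopes to coincide. The only difference is cosmetic — the paper first normalizes $f|_C\equiv 0$ and works with the one-parameter family of segments $F_t(\lambda)=f(\lambda x_t+(1-\lambda)x'_t)$, whereas you apply the subgradient inequality at a fixed point of each component directly, which is a slightly cleaner packaging of the same idea.
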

\begin{proof}
Without lose of generality we may assume that $f(x)|_{C} \equiv 0$ and $f(x)|_{C'}  = \langle a, x \rangle + a_0,$ where 
$a$ is a vector from $\mathbb{R}^m$ and $a_0$ is a real number.
Because $v\in \mathrm{recc}(C) \cap \mathrm{recc}(C'),$ we can find two points $x\in C,$ $x' \in C'$ and a real number $d$ such that $x$ and $x'$ belong 
to the plane $\{y\in \mathbb{R}^m: \langle v, y \rangle = d\}.$ 
Let us denote $x_t= x + t v$ and $x'_t= x' + t v.$ Obviously, for any $t>0,$ we have $x_t\in C$ and $x'_t\in C'.$
Consider the function $F_t(\lambda) = f(\lambda x_t + (1-\lambda) x'_t).$ Since $f$ is convex, $F_t(\lambda)$ is also convex.
We have $$\frac{\partial}{\partial \lambda} F_t(\lambda) = 
\langle\nabla f(y) |_{y=\lambda x_t + (1-\lambda) x'_t}, x_t - x'_t\rangle = \langle\nabla f(y) |_{y=\lambda x_t + (1-\lambda) x'_t}, x - x'\rangle.$$
Because  $F_t(\lambda)$ is convex, we have 
$$F_t(\lambda)\geq \frac{\partial}{\partial \lambda} F_t(\lambda)|_{\lambda = \lambda_0} (\lambda - \lambda_0) + F_t(\lambda_0)$$
for any $\lambda$ and $\lambda_0.$
Thus, for $\lambda_0 = 0,$ we get $$F_t(\lambda) \geq \langle a, x\rangle + a_0 + t \langle a, v\rangle,$$
and, for $\lambda_0=1,$ we get $$F_t(\lambda) \geq 0.$$
This gives us two inequities $$F_t(1)=0 \geq \langle a, x\rangle + a_0 + t \langle a, v\rangle$$  and
$$F_t(0)= \langle a, x'\rangle + a_0 + t \langle a, v\rangle \geq 0,$$
they should hold for any $t \geq 0.$ It is possible if and only if $ \langle a, v \rangle =0.$ Because 
$\nabla_v f(y)|_{y=x} =0$ and $\nabla_v f(y)|_{y=x'} =\langle a, v \rangle,$ this gives us the statement of the lemma.
\end{proof}

\begin{proposition}\label{pr.grad} The following inclusions hold $$\mathrm{ri\;} N_\omega \subset \im \nabla R_\omega \subset N_\omega.$$
\end{proposition}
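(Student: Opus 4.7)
The proof splits into two independent inclusions with rather different flavors. Both rest on Lemma \ref{lm.2}, which for each direction $v \in \mathbb{R}^m$ furnishes a connected component $C \in \Upsilon$ whose recession cone contains $v$ and whose order vector $\nu_\omega(C)$ maximizes $\langle\,\cdot\,, v\rangle$ over $N_\omega$.

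For the easier inclusion $\im \nabla R_\omega \subset N_\omega$, I would argue by contradiction: if $y \in \nabla R_\omega(x)$ lies outside the (compact, by the corollary following Lemma \ref{lm.2}) convex set $N_\omega$, pick a separating direction $v$ with $\langle y, v\rangle > \max_{z \in N_\omega}\langle z, v\rangle$, and apply Lemma \ref{lm.2} to produce $C$ with $v \in \mathrm{recc}(C)$ and $\langle \nu_\omega(C), v\rangle = \max_{z \in N_\omega}\langle z, v\rangle$. Fixing any $x_0 \in C$, the ray $x_0 + tv$ stays in $C$ for all $t \geq 0$, so $R_\omega(x_0 + tv) = R_\omega(x_0) + t\langle \nu_\omega(C), v\rangle$. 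Substituting into the subgradient inequality $R_\omega(x_0 + tv) - R_\omega(x) \geq \langle y, x_0 + tv - x\rangle$ and rearranging yields
\[
t\bigl(\langle \nu_\omega(C), v\rangle - \langle y, v\rangle\bigr) \geq \langle y, x_0 - x\rangle + R_\omega(x) - R_\omega(x_0),
\]
which is impossible as $t \to +\infty$ since the coefficient of $t$ on the left is strictly negative.

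For the inclusion $\mathrm{ri\,} N_\omega \subset \im \nabla R_\omega$, I would use convex duality. Let $R_\omega^*(y) = \sup_{x}\bigl(\langle y, x\rangle - R_\omega(x)\bigr)$ be the Legendre--Fenchel conjugate. The classical result (Rockafellar, Corollary~23.5.1) gives $\mathrm{ri}(\mathrm{dom}\, R_\omega^*) \subset \im \partial R_\omega$, so it suffices to prove $\mathrm{dom}\, R_\omega^* = N_\omega$. The key step is identifying the recession function $R_\omega^\infty(v) := \lim_{t \to \infty}(R_\omega(x_0 + tv) - R_\omega(x_0))/t$ with the support function $h_{N_\omega}(v)$ of $N_\omega$: choosing $C$ and $x_0 \in C$ as in Lemma \ref{lm.2} and using the affinity of $R_\omega$ on $C$, the ray $x_0 + tv$ lies in $C$ for all $t \geq 0$, whence $R_\omega^\infty(v) = \langle \nu_\omega(C), v\rangle = h_{N_\omega}(v)$ (and this limit is independent of the base point since $R_\omega$ is finite and convex on all of $\mathbb{R}^m$). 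Since $\mathrm{dom}\, R_\omega^* = \{y : \langle y, v\rangle \leq R_\omega^\infty(v)\ \forall v\}$ and $N_\omega = \{y : \langle y, v\rangle \leq h_{N_\omega}(v)\ \forall v\}$, we obtain $\mathrm{dom}\, R_\omega^* = N_\omega$, and the inclusion follows.

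The main obstacle is the identification $R_\omega^\infty = h_{N_\omega}$; once this is in hand, the remaining arguments are either structural facts about $R_\omega$ already established (affinity on components, convexity, continuity) or routine convex-analytic machinery. All of the geometric work is accomplished by Lemma \ref{lm.2}, which guarantees that \emph{every} direction in $\mathbb{R}^m$ is both a recession direction of some component and a normal direction to $N_\omega$ at the corresponding order vertex.
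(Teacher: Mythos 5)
Your proof is correct, and at bottom it is the same Passare--Rullg\r{a}rd duality argument the paper uses, dressed in more explicit convex-analytic language. The paper introduces the set $G$ of all $\xi$ for which $R_\omega(x)-\langle\xi,x\rangle$ is bounded below; this $G$ is precisely $\mathrm{dom}\, R_\omega^*$, and the paper's two sandwich inclusions ($\im\nabla R_\omega\subset G$ and $\mathrm{int}\,G\subset \im\nabla R_\omega$) are exactly the Fenchel-conjugate facts you cite from Rockafellar. The real divergence is in how $G=N_\omega$ is established: for $N_\omega\subset G$ the paper builds the explicit piecewise-affine minorant $S(x)=\sup_{C\in\Upsilon}\langle\nu_\omega(C),x-x_C\rangle+R_\omega(x_C)$ and checks boundedness directly, whereas you obtain the same inclusion by identifying the recession function $R_\omega^\infty$ with the support function $h_{N_\omega}$ via Lemma~\ref{lm.2} and invoking the duality between $R_\omega^\infty$ and $\mathrm{dom}\,R_\omega^*$; for the other inclusion both arguments run the same ray $x_0+tv$ inside a component supplied by Lemma~\ref{lm.2}, yours phrased through the subgradient inequality, the paper's through unboundedness below. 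Your route is slightly slicker in that it avoids constructing $S$, at the cost of leaning on Rockafellar's Theorem~13.3; the paper's is more self-contained. One pedantic point: the identity $\mathrm{dom}\,R_\omega^*=\{y:\langle y,v\rangle\leq R_\omega^\infty(v)\ \forall v\}$ holds only up to closure (the right-hand side is $\mathrm{cl}(\mathrm{dom}\,R_\omega^*)$), but since relative interiors of a convex set and of its closure agree, this does not affect the conclusion $\mathrm{ri}\,N_\omega\subset\im\nabla R_\omega$, and your second inclusion is proved independently, so nothing breaks.
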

\begin{proof}

Proof of this proposition repeats the proof of Theorem 3 in \cite{PR}.
The image of the gradient map $\im \nabla R_\omega$ of the convex function $R_\omega$ by Definition \ref{df.grad} is equal to the 
set of $\xi\in \mathbb{R}^m$ such that $R_\omega(x) - \langle \xi , x\rangle$ attains its global
minimum in $\mathbb{R}^m$. Let $G$ be the set of all $\xi\in\mathbb{R}^m$ such that $R_\omega (x) - \langle \xi , x\rangle$ 
is bounded from below on $\mathbb{R}^m$.
It is easy to see that $\im \nabla R_\omega$ is contained in $G$ and that the interior of $G$ is contained in $\im \nabla R_\omega$. 
Hence the statement will be proved if we prove that $G$ is equal to the Newton polytope $N_\omega$ of $\mathcal{A}_\omega$.

Let $\Upsilon$ be a set of connected components of $\mathbb{R}^m\setminus \mathcal{A}_\omega.$ For each $C\in \Upsilon$ 
choose a point $x_C$ such that $x_C\in C.$
Consider the function $$S(x)=\sup_{C\in \Upsilon } \langle \nu_\omega(C) ,x-x_C \rangle + R_\omega (x_C).$$
Obviously, $R_\omega(x)\geq S(x)$ for any $x\in \mathbb{R}^m.$
Suppose $\xi$ is in $N_\omega,$ then the function $S(x) - \langle \xi , x\rangle$ is bounded from below. Since 
$R_\omega (x) - \langle \xi , x\rangle \geq S(x) - \langle \xi , x\rangle,$ $R_\omega (x) - \langle \xi , x\rangle $ is also bounded from below. Therefore, if $\xi\in N_\omega,$ then $\xi\in G.$

Suppose $\xi$ is outside $N_\omega$. Take $y\in \mathrm{R}^m$ such that $\langle \xi , y \rangle > \sup_{\zeta\in N_\omega} \langle \zeta , y \rangle$ 
(because $N_\omega$ is convex, this $y$ exists). 
By Lemma \ref{lm.2}, there is a connected component $C\in \Upsilon$ such that 
 $y\in \mathrm{recc}(C)$ and $y\in \mathrm{norm}_{N_\omega}(\nu_\omega(C)),$ this means that the supremum
 $\sup_{\zeta\in N_\omega} \langle \zeta , y \rangle$
 is attained at the point $\zeta=\nu_\omega(C).$
Because $y\in \mathrm{recc}(C),$ for any $t>0$ we have $x=x_C + t y \in C$ and 
$R_\omega(x_C + t y) = R_\omega(x_C) + \langle\zeta , t y \rangle .$ 
Whence $$R_\omega(x_C + t y) - \langle \xi , x_C + t y \rangle =  R_\omega(x_C) + \langle \zeta  - \xi ,  t y \rangle - \langle \xi, x_C \rangle\rightarrow - \infty $$  
as $t \rightarrow \infty.$ Thus, by definition, $\xi$ is not in $G.$ Therefore, if $\xi\not\in N_\omega,$ then $\xi\not\in G.$ Whence we get $N_\omega = G.$
\end{proof}

Now we can prove Theorem \ref{th.rec}.
\begin{proof}

Let us show that the order map is injective.
Suppose there are two connected components $C$ and $C'$ such that $\nu_\omega(C)=\nu_\omega(C').$ 
Without lose of generality we may assume that  $\nu_\omega(C)=\nu_\omega(C')=0.$
By Theorem \ref{th.ronkin}, $R_\omega$ is convex, therefore for any $y\in \mathbb{R}^m$ holds $$R_\omega(y)\geq \langle \nabla R_\omega(x), y-x\rangle + R_\omega(x).$$
Whence we get $R_\omega(y)\geq R_\omega(x)$ 
and  $R_\omega(y)\geq R_\omega(x'),$ where $x\in C$ and $x'\in C'.$ 
This implies $R_\omega(x) = R_\omega(x').$ We may assume that $R_\omega(x) = R_\omega(x')=0.$
Thus we have $R_\omega(y) \geq 0$ for any $y\in \mathbb{R}^m.$ In the other hand, since $R_\omega$ is 
convex, for any $\lambda\in [0,1]$ holds $$R_\omega(\lambda x  + (1-\lambda) x') \leq 0.$$
Whence $R_\omega(y)\equiv 0$ on a convex hull $\mathrm{conv}(C\cup C')$ of $C\cup C'.$ 
By Proposition \ref{pr.conv}, $C$ and $C'$ are open, hence $\mathrm{conv}(C\cup C')$ is also open. 
By Theorem \ref{th.ronkin}, the restriction of the Ronkin function  $R_\omega(y)$ 
to a connected open set $U$ is affine
if and only if $U$ does not intersect $\mathcal{A}_\omega.$ 
Whence $\mathrm{conv}(C\cup C') \subset \mathbb{R}^m\setminus \mathcal{A}_\omega.$ Since $\mathrm{conv}(C\cup C')$ is connected, the connected component $C$ coincides with  $C'.$

Let us show that $v\in \mathrm{recc}(C)$  implies $v\in \mathrm{norm}_{N_\omega}(\nu_\omega(C)).$ 
Suppose that $v\in \mathrm{recc}(C).$ 
By  Lemma \ref{lm.2}, there is a connected component $C'$ 
such that $v\in \mathrm{norm}_{N_\omega}(\nu_\omega(C'))$ and $v\in \mathrm{recc}(C').$ 
By Lemma \ref{lm.1}, we get $\nabla_v R_\omega(x)= \nabla_v R_\omega(x'),$ where $x\in C,x'\in C'.$
Hence $ \langle \nu_\omega (C') , v\rangle = \langle \nu_\omega (C) , v\rangle.$

Consider the function $f_v(y)=\langle y, v \rangle$ on the set $N_\omega.$ 
A point $x$ is a maximum point of $f_v$ if and only if $v\in\mathrm{norm}_{N_\omega}(x).$ Because  
$v\in \mathrm{norm}_{N_\omega}(\nu_\omega(C')),$ the function $f_v$ 
attains its maximum at the point $\nu_\omega (C').$
Since $$f_v(\nu_\omega (C'))=\langle \nu_\omega (C') , v\rangle = \langle \nu_\omega (C) , v\rangle=f_v(\nu_\omega (C)),$$  
we see that $f_v$ has a maximum at the point $\nu_\omega (C).$ Therefore, $v\in  \mathrm{norm}_{N_\omega}(\nu_\omega(C)).$

Let us show that $v\in \mathrm{norm}_{N_\omega}(\nu_\omega(C))$ implies  $v\in \mathrm{recc}(C).$ 
Suppose that $v\in \mathrm{norm}_{N_\omega}(\nu_\omega(C))$. We may assume that $R_\omega|_C \equiv 0.$ 
Because $v\in \mathrm{norm}_{N_\omega}(\nu_\omega(C)),$ the function $f_v(y)=\langle y, v \rangle$ 
on the set $N_\omega$ attains its maximum at the point $\nu_\omega (C).$ 
Since, by Proposition \ref{pr.grad}, $\im \nabla R_\omega \subset N_\omega,$ 
we get $$\max_{x\in \mathbb{R}^m }\nabla_v R_\omega(x) =  \max_{x\in \mathbb{R}^m } \langle v, \nabla R_\omega(x)\rangle = \langle \nu_\omega (C), v\rangle.$$

Let us consider the function $g_x(t)=R_\omega(x+ t v).$ It is a convex function on $\mathbb{R}$, 
therefore $$\frac{\partial g_x(t)}{\partial t}|_{t=t_1} \geq \frac{\partial g_x(t)}{\partial t}|_{t=t_0},$$ 
when $t_1 > t_0.$ Because $\frac{\partial g_x(t)}{\partial t}= \nabla_v R_\omega(x+v t),$ 
we obtain $\nabla_v R_\omega(x+v t) \geq \nabla_v R_\omega(x)$ for any $x\in C$ and $t>0.$
In the other hand, since $\max_{x\in \mathbb{R}^m }\nabla_v R_\omega(x) = \langle \nu_\omega (C), v\rangle,$
we have $\nabla_v R_\omega(y) \leq \nabla_v R_\omega(x)$ for any $y\in \mathbb{R}^m, x\in C.$
Thus $\nabla_v R_\omega(x+v t) = \nabla_v R_\omega(x)$ for any $x\in C$ and $t>0.$

By assumption $R_\omega|_C \equiv 0,$ whence $\nabla_v R_\omega (x+v t) \equiv 0$ and, 
consequently,  $R_\omega (x+v t) \equiv 0$ for any  $x\in C$ and $t>0.$ 
It follows form Proposition \ref{pr.conv} that the set $$\widetilde{C}=\{x\in \mathbb{R}^m: x= y + v t, y\in C, t\geq 0\}$$
is open, also it is connected.  By construction $R_\omega|_{\widetilde{C}}\equiv 0.$ 
Therefore by Theorem \ref{th.ronkin} we get $\widetilde{C} \subset \mathbb{R}^m\setminus \mathcal{A}_\omega,$ 
whence $\widetilde{C}  = C.$ Thus $C$ contains the ray $x + v t, t\geq 0,$ this means $v\in \mathrm{recc}(C).$
\end{proof}

\begin{remark}
In the classical case the order map $\nu_F$ is integer-valued, therefore the number of connected component 
is  not greater than $|\mathbb{Z}^m \cap N_F|.$ In the generalized case
it is not possible to use this kind of argument at least in the general case. So there is a problem: find
an upper bound of the number of connected components of $\mathbb{R}^m\setminus \mathcal{A}_\omega.$
We believe that it is possible to estimate this number in terms of topology of $X\setminus V.$ 
In particular, let $F(z_1,\dots,z_m)$ be a Laurent polynomial, suppose that $X\setminus V=\{z\in (\mathbb{C}^*)^m| F(z_1,\dots,z_m)=0\}$ is smooth. 
Then, from \cite{DK}, 
it follows that the number of integer point in the Newton polytope $|N_F\cap \mathbb{Z}^m|$ is equal to $\dim F^n H^n(X\setminus V,\mathbb{C})+1,$ 
where $F^n H^n(X\setminus V,\mathbb{C})$ is 
the $n$-th term of the Hodge filtration on the cohomology group $H^n(X\setminus V,\mathbb{C}).$ The results from tropical geometry \cite{IKMZ} suggest 
that the same estimation could be true in the generalized case. 
\end{remark}

\begin{remark}
Let us notice that initial data of the generalized amoeba: a complex manifold $X$ and a set of special 
meromorphic forms $\omega_1,\dots,\omega_m$ are objects of algebraic geometry nature. Moreover, Theorem \ref{pr.1} gives a nice description of this type of data in the K\"{a}hler manifold case. 
But, so far, we have only analytical description of the Newton polytope $N_\omega.$  
It would be interesting to find a purely algebraic geometry style description of $N_\omega.$  

In the classical case the Newton polytope  $N_F$ has a straightforward description in terms of the polynomial $F(z).$ 
One can find more elaborated decryption of the Newton polytope in terms of an appropriate toric compactification $X_F$ of $\{F(x)=0\} 
\subset (\mathbb{C}^*)^m \subset  X_F.$
Then Newton polytope plays role of generalized degree of the variety $\overline{\{F(x)=0\}}$ in $X_F.$ 
The further generalization of this idea is called the Newton-Okunkov body \cite{KK}.
We think that it could be interesting to find a relation ( if there is any) between Newton-Okunkov bodies and Newton polytopes of generalized amoebas.  

\end{remark}
\subsection{Monge-Amp\`{e}re measure of the Ronkin function}
Now we are going to prove some statements about the Monge-Amp\`{e}re measure of the Ronkin function of a generalized amoeba. 
These statements are generalizations of Theorem 3 and Theorem 4 from \cite{PR}.
The proofs of these generalizations repeat the proofs of the classical amoeba versions. 

If $f(x)$ is a smooth convex function, its Hessian $\mathrm{Hess}(f)$ is a positive semi-definite matrix.
In particular, the determinant of the Hessian is a nonnegative function. The product of $\det \mathrm{Hess}(f)$ with ordinary 
Lebesgue measure is known as the real Monge-Amp\`{e}re measure of $f(x)$. We denote it by $$M f= \det \mathrm{Hess}(f) \; d x .$$
In fact, the Monge-Amp\`{e}re operator 
can be extended to all convex functions. In general, if $f(x)$ is a non-smooth convex function, $M f$ will be a positive measure, see \cite{RT}.
Hence $M R_\omega$ is a positive measure supported on $\mathcal{A}_\omega$.

Using the polarization formula for the determinant we obtain a unique symmetric multilinear form
\begin{equation}\label{eq.MApol}
M(f_1,\dots,f_m)=\sum_{J,K\in S_m} \frac{\sign J+\sign K}{m!}  \prod^m_{p=1} \frac{\partial^2}{\partial x_{j_p} \partial x_{k_p}}  f_{p} d x
\end{equation}
such that $M(f,\dots,f)=M(f),$ here $S_m$ is the symmetric group of degree $m,$ $K=(k_1,\dots,k_m), J=(j_1,\dots,j_m),$ 
and  $\sign J$ is the parity of $J.$
One can extend this multilinear form to the set of all convex functions on $\mathbb{R}^m.$

Let $E$ be a Borel set in $\mathbb{R}^m.$ Denote by $1_E$ the indicator function of the set $E,$ i.e.,
$1_E(x)=1$ if $x\in E,$ and $1_E(x)=0$ if $x\not\in E.$
Then for any $C^2-$functions $f_2,\dots,f_{m}$ we can define 
$M(R_\omega,f_2,\dots,f_m)(E)$ to be equal to
$$M(R_\omega,f_2,\dots,f_m)(E) = \sum_{J,K\in S_m} \frac{\sign J+\sign K}{m!}  
\frac{\partial^2}{\partial x_{j_1} \partial x_{k_1}} R_\omega [1_E \prod^{m}_{p=2} \frac{\partial^2}{\partial x_{j_p} \partial x_{k_p}}  f_{p} dx].$$
It is easy to check that this formula is consistent with definition of $M(f_1,\dots,f_m)$ for convex functions $f_1,\dots,f_m.$

\begin{proposition} Let $E$ be any Borel set in $\mathbb{R}^m$.
Let us denote $d^c=\partial-\overline{\partial}.$
Then \begin{equation}\label{eq.MAfun} m! M(R_\omega,f_2,\dots,f_m)(E) = 
\RRe\frac{1}{(2 \pi i)^n}\int_{\Log^{-1}_\omega(E)}  d d^c \Log^{*}_\omega f_2\wedge \dots \wedge d d^c \Log^{*}_\omega f_m.\end{equation}

Denote by $\Delta=\frac{\partial^2}{\partial x_1^2}+\dots+\frac{\partial^2}{\partial x_m^2}$ the Laplace operator. Then
$$\int_E \Delta R_\omega d x=\frac{1}{n!} \RRe \frac{1}{(2 \pi i)^n} \int_{\Log^{-1}_\omega(E)} (-\frac{1}{2} \sum^m_{j=1} 
\omega_j \wedge \overline{\omega}_j)^n.$$
\end{proposition}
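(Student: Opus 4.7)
The plan is to insert the distributional identity for $\partial^2 R_\omega/\partial x_{j_1}\partial x_{k_1}$ from Proposition \ref{pr.ronkin_eq2} into the polarisation formula (\ref{eq.MApol}) for the Monge-Amp\`ere form, and then to match the resulting sum term by term with the wedge-product expansion of $\bigwedge_{p=2}^{m} d d^c \Log^*_\omega f_p$. The rest of the argument is a direct computation together with a single sign check that exploits $m=n+1$.

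First I would compute $dd^c \Log^*_\omega f$. Since each $\omega_j$ is a closed $(1,0)$-form on $X\setminus V$, differentiation gives $\partial \Log^*_\omega f=\tfrac12\sum_j \Log^*_\omega(\partial_{x_j}f)\,\omega_j$ and analogously for $\overline\partial$, so $dd^c=-2\partial\overline\partial$ yields
\[ d d^c \Log^*_\omega f \;=\; -\tfrac12\sum_{a,b=1}^m \Log^*_\omega\!\left(\tfrac{\partial^2 f}{\partial x_a \partial x_b}\right)\omega_a\wedge\overline\omega_b.\]
Taking the wedge over $p=2,\dots,m$ and using $\dim_{\mathbb C}(X\setminus V)=n=m-1$, only multi-indices for which $(j_2,\dots,j_m)$ and $(k_2,\dots,k_m)$ are tuples of distinct entries survive; such tuples are in bijection with the permutations $J,K\in S_m$ (their first entries $j_1,k_1$ being the complementary indices), and one has
\[ \omega_{j_2}\wedge\dots\wedge\omega_{j_m} \;=\; (-1)^{j_1-1}\sgn(J)\,\Omega_{j_1},\]
together with the conjugate identity for $K$. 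After separating the $\omega$'s from the $\overline\omega$'s (sign $(-1)^{n(n-1)/2}$), this expresses $\bigwedge_{p=2}^m dd^c \Log^*_\omega f_p$ as an explicit sum over $(J,K)$ of terms proportional to $\Omega_{j_1}\wedge\overline{\Omega}_{k_1}$.

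Substituting (\ref{eq.Rd2}) for $\partial^2 R_\omega/\partial x_{j_1}\partial x_{k_1}$ into the polarisation expansion of $m!\,M(R_\omega,f_2,\dots,f_m)(E)$ and comparing with the integrand produced in the previous step, the two sides of (\ref{eq.MAfun}) differ only by the scalar $(-1/2)^n\cdot 2^n\cdot(-1)^{n(n-1)/2}\cdot(-1)^{m(m-1)/2}$; since $m=n+1$, the exponent equals $n+n(n-1)/2+n(n+1)/2=n+n^2\equiv 0\pmod 2$, so this scalar is $+1$ and (\ref{eq.MAfun}) follows.

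The Laplacian identity is then the specialisation $f_2=\dots=f_m=\tfrac12|x|^2$. In this case $\partial^2 f_p/\partial x_j\partial x_k=\delta_{jk}$, so the computation of $dd^c$ above gives $dd^c \Log^*_\omega f_p=-\tfrac12\sum_j \omega_j\wedge\overline\omega_j$ and the right-hand side of (\ref{eq.MAfun}) becomes $\RRe\tfrac{1}{(2\pi i)^n}\int_{\Log^{-1}_\omega(E)}\bigl(-\tfrac12\sum_j\omega_j\wedge\overline\omega_j\bigr)^n$; on the Monge-Amp\`ere side the Kronecker $\delta$'s force $j_p=k_p$ for $p\geq 2$ and hence $J=K$, so the polarisation sum collapses to $(m-1)!\sum_{j_1}\partial^2_{x_{j_1}}R_\omega[\mathbf 1_E\,dx]=n!\int_E\Delta R_\omega\,dx$, which after dividing by $n!$ gives the stated formula. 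The main obstacle is the sign bookkeeping in the first two steps; a minor technical point is that the currents $\partial^2_{x_jx_k}R_\omega$ must be evaluated on the non-smooth test form $\mathbf 1_E\,dx$, which is legitimate because convexity of $R_\omega$ makes them (locally) signed Radon measures.
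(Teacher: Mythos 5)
Your proposal is correct and follows essentially the same route as the paper: expand $\bigwedge_{p\ge 2} dd^c\Log^*_\omega f_p$ in the $\omega_a\wedge\overline\omega_b$ basis, use $n=m-1$ to reduce the surviving terms to permutation pairs contributing $(-1)^{j_1+k_1}\sgn J\,\sgn K\,\Omega_{j_1}\wedge\overline\Omega_{k_1}$, and match against (\ref{eq.Rd2}) inserted into the polarisation formula, with the same sign cancellation $n+n(n-1)/2+n(n+1)/2\equiv 0\pmod 2$. The only (harmless) deviation is your normalisation $f_p=\tfrac12|x|^2$ in place of the paper's $\|x\|^2$ for the Laplacian identity, which streamlines the constants, and your explicit justification for testing against $\mathbf 1_E\,dx$ is a welcome extra care.
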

\begin{proof}
Recall that $\Log_\omega(z)=(x_1(z),\dots,x_m(z)),z\in X\setminus V,$ and $d x_j=\frac{1}{2}(\omega_j+ \overline{\omega}_j).$ Therefore 
we obtain 
$$d d^c \Log^{*}_\omega f_p=-\frac{1}{2} \sum^m_{j,k=1} \Log^{*}_\omega (\frac{\partial^2}{\partial_j\partial_k} f_p) 
\omega_j\wedge \overline{\omega}_k,$$
 and

\begin{multline*}
d d^c \Log^{*}_\omega f_2\wedge \dots \wedge d d^c \Log^{*}_\omega f_m
=\\
=(-\frac{1}{2})^n (-1)^{\frac{(n-1)n}{2}} \sum^m_{j_2,\dots,j_m,k_2,\dots,k_m=1} \Log^{*}_\omega (\prod^m_{p=2}
\frac{\partial^2}{\partial_{j_p}\partial_{k_p}} f_p) 
\omega_{j_2}\wedge\dots\wedge\omega_{j_m} \wedge \overline{\omega}_{k_2}\wedge\dots\wedge\overline{\omega}_{k_2}=\\
=(-\frac{1}{2})^n (-1)^{\frac{(n-1)n}{2}} \sum_{K,J\in S_n}(-1)^{j_1+k_1} (\sign K + \sign J)  
\Log^{*}_\omega (\prod^m_{p=2}\frac{\partial^2}{\partial_{j_p}\partial_{k_p}} f_p)  \Omega_{j_1}
\wedge \overline{\Omega}_{k_1},
\end{multline*}
where $K=(k_1,\dots,k_m),J=(j_1,\dots,j_m).$

Combining the last expression with the formula for $\frac{\partial^2}{\partial x_k \partial x_j}R_\omega$ (\ref{eq.Rd2}) and for $M(R_\omega,f_1,\dots,f_m)(E)$ we obtain 
$$\RRe\int_{\Log^{-1}_\omega(E)} \frac{1}{(2 \pi i)^n} d d^c \Log^{*}_\omega f_2\wedge \dots \wedge d d^c \Log^{*}_\omega f_m=$$ 
$$= \sum_{K,J\in S_n} (\sign K + \sign J) 
\frac{\partial^2}{\partial x_{k_1} \partial x_{j_1}}R_\omega [1_E \prod^m_{p=2}\frac{\partial^2}{\partial_{j_p}\partial_{k_p}} f_p d x]= 
m! M(R_\omega,f_1,\dots,f_m)(E).$$

Consider the function
$||x||^2=x_1^2+\dots+ x_m^2.$ Since 
$\frac{\partial^2}{\partial x_{k} \partial x_{j}} ||x||^2=2 \delta_{ij}$ (Kronecker delta), using (\ref{eq.MApol}) we obtain 
\begin{equation}\label{eq.laplace}
M(R_\omega,||x||^2,\dots,||x||^2)(E)=\int_E \frac{2^{n}}{m} \Delta R_\omega d x.
\end{equation}

Let us compute $d d^c \Log^{*}_\omega  ||x||^2.$
Since  
$d x_j^2 = 2 x_j d x_j,$ we get $$d  \Log^{*}_\omega x_j^2 = \Log^{*}_\omega d x_j^2 = 2 \Log^{*}_\omega (x_j) \frac{1}{2}(\omega_j+ \overline{\omega}_j).$$
Whence $$d d^c \Log^{*}_\omega (x_j^2) = - d^c d \Log^{*}_\omega(x_j^2) = - 2 d^c \Log^{*}_\omega (x_j)\frac{1}{2}(\omega_j+ \overline{\omega}_j)= 
- \frac{1}{2}(\omega_j- \overline{\omega}_j) \wedge (\omega_j+ \overline{\omega}_j) =-\omega_j \wedge \overline{\omega}_j.$$
Thus we have $$d d^c \Log^{*}_\omega ||x||^2 = - \sum^m_{j=1} \omega_j \wedge \overline{\omega}_j.$$
Using (\ref{eq.laplace}) and (\ref{eq.MAfun}) we get 
$$\int_E \frac{2^{n}}{m} \Delta R_\omega d x=M(R_\omega,||x||^2,\dots,||x||^2)(E) = \frac{1}{m!}\RRe \frac{1}{(2 \pi i)^n} \int_{\Log^{-1}_\omega(E)} 
(-1)^n (\sum^m_{j=1} \omega_j \wedge \overline{\omega}_j)^n.$$
Whence we obtain $$\int_E \Delta R_\omega d x=\frac{1}{n!} \RRe \frac{1}{(2 \pi i)^n} \int_{\Log^{-1}_\omega(E)} (-\frac{1}{2} \sum^m_{j=1} \omega_j 
\wedge \overline{\omega}_j)^n.$$
\end{proof}

\begin{proposition}\label{pr.totalmass} The total mass of $M R_\omega$ is equal
to the volume of the Newton polytope $N_\omega$. 
\end{proposition}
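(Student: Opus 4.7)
The plan is to invoke the standard description of the real Monge-Amp\`ere measure of a convex function as the pushforward of Lebesgue measure under the subdifferential (multivalued gradient) map. Concretely, for any convex function $u:\mathbb{R}^m\to \mathbb{R}$, one has
$$
M u(E) \;=\; \mathrm{Vol}\bigl(\nabla u(E)\bigr)
$$
for every Borel set $E\subset\mathbb{R}^m$, where $\nabla u(E)=\bigcup_{x\in E}\nabla u(x)$ is understood in the sense of Definition \ref{df.grad}. This is the classical Rauch-Taylor identity (\cite{RT}), which follows from Alexandrov's a.e. twice-differentiability of convex functions together with the change-of-variables formula in the smooth case, extended to the general convex case by approximation.

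Applying this identity to $R_\omega$ (which is convex by Theorem \ref{th.ronkin}) with $E=\mathbb{R}^m$, I would write
$$
\int_{\mathbb{R}^m} M R_\omega \;=\; \mathrm{Vol}\bigl(\nabla R_\omega(\mathbb{R}^m)\bigr).
$$
The gradient image is already sandwiched by Proposition \ref{pr.grad}:
$$
\mathrm{ri}\, N_\omega \;\subset\; \nabla R_\omega(\mathbb{R}^m) \;\subset\; N_\omega.
$$
Since $N_\omega$ is a convex polytope (being the convex hull of the image of the order map, and bounded by the corollary following Lemma \ref{lm.2}), its relative boundary $N_\omega\setminus \mathrm{ri}\, N_\omega$ is a finite union of proper faces and hence has Lebesgue measure zero in $\mathbb{R}^m$ when $N_\omega$ is full-dimensional; in the lower-dimensional case both the polytope and the gradient image have Lebesgue measure zero. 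Either way the sandwich forces
$$
\mathrm{Vol}\bigl(\nabla R_\omega(\mathbb{R}^m)\bigr) \;=\; \mathrm{Vol}(N_\omega),
$$
and the proposition follows.

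The main obstacle is the first step, namely quoting the Rauch-Taylor identity for the possibly non-smooth convex function $R_\omega$, which up to this point in the paper has been controlled only as a current and shown to be continuous and convex. The required input from \cite{RT} is precisely that the Monge-Amp\`ere measure introduced just before the statement (via the multilinear form acting on $1_E$) coincides with the pushforward of Lebesgue measure under the subdifferential, and that the total mass is therefore $|\nabla R_\omega(\mathbb{R}^m)|$. Boundedness of $N_\omega$ guarantees that this total mass is finite, so no integrability issues arise and the argument closes.
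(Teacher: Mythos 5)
Your proof is correct and follows essentially the same route as the paper: the paper likewise invokes \cite[Proposition 3.4 and Definition 2.6]{RT} for the identity $M f(E) = \mathrm{Vol}(\nabla f(E))$ and then concludes directly from the sandwich $\mathrm{ri}\, N_\omega \subset \im \nabla R_\omega \subset N_\omega$ of Proposition \ref{pr.grad}. Your additional remark that the relative boundary of the polytope has Lebesgue measure zero is a detail the paper leaves implicit, but it is the same argument.
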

\begin{proof}
From the results of \cite[Proposition 3.4 and Definition 2.6]{RT}, it is known that $$M f (E) = \mbox{Lebesgue measure of } \nabla f (E),$$ where $E$ 
is a Borel set and $\nabla f (E)$
is the image of $E$ under the gradient map $\nabla f(x)$. 
By Proposition \ref{pr.grad} we have $\mathrm{ri \;} N_\omega \subset \im \nabla R_\omega \subset N_\omega$, 
whence $M  R_\omega (\mathbb{R}^m)=\mathrm{Vol}(N_\omega).$
\end{proof}

\begin{remark}\label{rm.MAmes}
The Monge-Amp\`{e}re measure $M  R_\omega$ can be expressed in terms of supercurrents.
Since $R_\omega(x)$ is convex, one can show that there is a well-defined supercurrent $\frac{1}{m!}(d' d'' R_\omega)^m\in \mathcal{D}^{m,m}(\mathbb{R}^m)$ 
\cite[Section 2]{Lag}.
Let $E$ be a Borel set in $\mathbb{R}^m$ and let $1_E$ be an indicator function of $E$. We can consider $1_E$ as a $(0,0)$-superform on $\mathbb{R}^m.$ 
Then  the Monge-Amp\`{e}re measure $M R_\omega$  of the set $E$ equals 
$$M R_\omega(E) = \frac{1}{m!}(d' d'' R_\omega)^m[1_E].$$
Indeed, if $f(x)$ is a smooth function on $\mathbb{R}^m,$  
then $$(d' d'' f)^m= m! (-1)^{\frac{m(m-1)}{2}} \det(\frac{\partial^2 }{\partial x_j \partial x_k} f) d x_1\wedge\dots\wedge d x_m \otimes d x_1\wedge\dots\wedge d x_m.$$
Therefore  $$\int_{\mathbb{R}^m} 1_E \frac{1}{m!}(d' d'' f)^m =\int_{E} \det(\frac{\partial^2 }{\partial x_j \partial x_k} f) d x_1 \wedge \dots \wedge d x_m=M f(E).$$
One can approximate the function $R_\omega$ by convex smooth functions $f_\varepsilon,$ $f_\varepsilon \rightarrow R_\omega$ as $\varepsilon\rightarrow 0.$
Then $(d' d'' f_\varepsilon)^m$ converges to $(d' d'' R_\omega)^m.$
Hence  $\frac{1}{m!}(d' d'' R_\omega)^m[1_E]$ is equal to the Monge-Amp\`{e}re measure $M R_\omega(E)$ of the set $E$.
\end{remark}

\section{Coordinate-free approach to amoebas.}
\subsection{Basic definitions.} 
In this section we are going to develop a coordinate-free approach to generalized amoebas. It does not provide any additional information about amoebas, 
but nevertheless it looks curious for us.

Let $X$ be a smooth compact complex $n-$dimensional manifold and let $V=\bigcup_j D_j$ be a simple normal crossing divisor on $X.$
We denote the real vector space of imaginary normalized holomorphic differentials on $X\setminus V$ by $H\subset H^0(X\setminus V, \Omega^1),$
and we denote the singular set of a differential from $\omega\in H$ by $\mathrm{Sing} \omega.$

Let $L$ be a real vector space of dimension $m.$ We denote its dual by $L^*,$ and $\langle\cdot,\cdot\rangle$ is the natural paring between $L$ and $L^*.$
Take any linear map $\varphi: L \rightarrow H.$
Consider
$$\mathrm{Sing} \varphi= \bigcup_{\omega \in \im \varphi} \mathrm{Sing} \omega,$$
it is a simple normal crossing divisor on $X$ and $\mathrm{Sing} \varphi\subset V.$ 
Let us fix a point $p_0$ in $X\setminus \mathrm{Sing} \varphi$ and define the following map
$$\mathrm{Log}_{\varphi,p_0}: X\setminus \mathrm{Sing} \varphi \rightarrow  L^*,$$
$$\langle\mathrm{Log}_{\varphi,p_0}(p),l\rangle = \RRe \int^p_{p_0} \varphi(l),$$
where $\langle\mathrm{Log}_{\varphi,p_0}(p),l\rangle$ is the paring between the linear functional $\mathrm{Log}_{\varphi,p_0}(p)\in L^*$ 
and an element $l\in L.$

\begin{definition}The \emph{generalized amoebas} $\mathcal{A}_\varphi$ is the image in $L^*$ of the map $\mathrm{Log}_{\varphi,p_0}.$
\end{definition}
Let $e_1,\dots,e_m$ be a basis of $L.$ Then we can define the isomorphism 
$$\Phi: L^* \rightarrow \mathbb{R}^m,$$
$$\Phi(\xi)= (\langle\xi,e_1\rangle,\dots.\langle\xi,e_m\rangle).$$
Consider $\omega=(\omega_1,\dots,\omega_m),$ where $\omega_j= \varphi(e_j).$
Then  $$\Phi \circ \mathrm{Log}_{\varphi,p_0} = \mathrm{Log}_{\omega,p_0}.$$
In the other words, the definition of the generalized amoeba that was used in the previous sections corresponds to a particular choice of basis in $L.$ 
That is why we consider the approach of this section to be coordinate-free. 

The choice of the point $p_0$ does not play much role. For  $p_0,p'_0\in X\setminus \mathrm{Sing} \varphi$ 
the corresponding maps $\Log_{\omega,p_0},$ $\Log_{\omega,p'_0}$ differ from each other by the shift by a constant vector.
We consider the maps $\Log_{\varphi,p_0}$ for the different choices 
of $p_0$ to be equivalent, and we will usually write just $\Log_{\varphi}$ instead of $\Log_{\varphi,p_0}.$

Let us define the residue along the divisor $D_j$ as follows
$$\mathrm{Res}_{D_j}: L \rightarrow \mathbb{R},$$
$$\mathrm{Res}_{D_j} l = \mathrm{Res}_{D_j} \varphi (l).$$
Thus there is an element $\mathrm{Res}_{D_j}(\cdot) \in L^*$ such that $\langle \mathrm{Res}_{D_j}(\cdot),l \rangle = \mathrm{Res}_{D_j} l.$ Suppose that $\mathrm{Sing} \varphi=\bigcup^s_{j=1} D_j.$ 
If $J$ is a subset of $ \{1,\dots,s\},$ then we can define the cone in $L^*$
$$\Sigma_J =\{\lambda =- \sum_{j\in J} \lambda_{j} \mathrm{Res}_{D_j}(\cdot) \in L^* , \lambda_j\geq 0\}.$$
\begin{definition} The \emph{asymptotic fan} $\Sigma_\varphi$ of the amoeba $\mathcal{A}_\varphi $ is the set of all cones $\Sigma_J$
such that $\bigcap_{j\in J} D_j\neq \emptyset.$
\end{definition}
The linear map $\varphi:L \rightarrow H$ extends naturally on exterior powers of $L,$ we denote by the same symbol
the induced map $$\varphi: \Lambda^n L \rightarrow \Lambda^n H.$$ 
\begin{definition} We say that an amoeba $\mathcal{A}_\varphi$ satisfies the \emph{nondegeneracy condition}
if the map $$\varphi: \Lambda^n L \rightarrow \Lambda^n H \subset H^0(X\setminus V, \Omega^n_{X}(\log V))$$ is not identically zero. 
\end{definition}

\subsection{The supercurrent $S^{m-n,m-n}_\varphi$.} 
We denote by $\mathcal{E}_c^{p,q}(L^*)$ the space of $C^\infty$-smooth differential superforms of degree $(p,q)$ with compact support on $L^*$ (for the definition see Section \ref{s.supercurr}).
Obviously, $$\mathcal{E}_c^{p,q}(L^*) = C_c^\infty(L^*) \otimes \Lambda^p L\otimes \Lambda^q L,$$
where $C_c^\infty(L^*)$ is  $C^\infty$-smooth functions with compact support on $L^*$ and $\Lambda^p L$ is $p$-th exterior power of $L.$
We denote by $\mathcal{D}^{m-p,m-q}(L^*)$ the space of tropical supercurrents of bidegree $(m-p,m-q)$ on $L^*$ . 

Let us define the supercurrent $S^{m-n,m-n}_\varphi\in \mathcal{D}^{m-n,m-n}(L^*).$ 
Since any element of $\mathcal{E}_c^{p,q}(L^*)\cong C^\infty_c(L^*) \otimes \Lambda^p L \otimes \Lambda^q L$ 
can be represented as a linear combination of elements of the following form
$$f(l^*) l\otimes l',$$ where $f(l^*)\in C_c^\infty(L^*),$ $l\in \Lambda^p L,$ $l'\in \Lambda^q L,$ it is enough 
to define values of $S^{m-n,m-n}_\varphi$ on the elements $f(l^*) l\otimes l'.$
Set
$$S^{m-n,m-n}_\varphi[f(l^*) l\otimes l']=\frac{(-1)^n}{2^n}\RRe\int_{X} \frac{1}{(2 \pi i)^n} \Log^*_{\varphi}(f(l^*)) \varphi(l) \wedge \overline{\varphi(l)},$$
here we consider an element $ \varphi(l)\in \Lambda^n H$ as an element of the space of closed holomorphic $n$-forms on $X\setminus V.$

\begin{lemma}
The supercurrent $S^{m-n,m-n}_\varphi$ is $d'$-closed, $d''$-closed, symmetric and positive.
\end{lemma}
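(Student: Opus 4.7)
The plan is to mirror the argument of Lemma \ref{lm.positive_ronkin}, which established exactly these four properties in a fixed basis. In fact, choosing any basis $e_1,\dots,e_m$ of $L$ and setting $\omega_j=\varphi(e_j)$, the isomorphism $\Phi:L^*\to\mathbb{R}^m$ identifies $S^{m-n,m-n}_\varphi$ with the current $S$ of Lemma \ref{lm.positive_ronkin}, so in principle one can simply quote that result. I will however sketch a direct coordinate-free argument, since translating the earlier proof makes the structure more transparent and lets one verify that no coordinate choice has crept into the definition.

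For $d''$-closedness I would take a test superform $\psi\in\mathcal{E}_c^{n,n-1}(L^*)$ and compute $S^{m-n,m-n}_\varphi[d''\psi]$. The pullback $\Log_\varphi^*$ intertwines the tropical $d''$ with a constant multiple of $\overline\partial$ on $X\setminus V$, via the identity $\Log_\varphi^*(d\xi)=\tfrac12(\varphi(\xi)+\overline{\varphi(\xi)})$ for $\xi\in L$. Because the forms $\varphi(l)\in\Lambda^n H$ are holomorphic and hence annihilated by $\overline\partial$, the integrand reduces to the pullback of a $d$-exact top-degree form on $X\setminus V$; by properness of $\Log_\varphi$ (Proposition \ref{pr.closed}) the pullback has compact support in $X\setminus V$, so Stokes' theorem kills it. The verification of $d'$-closedness is identical, using that $\overline{\varphi(l')}$ is antiholomorphic.

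Symmetry amounts to the identity $S^{m-n,m-n}_\varphi[f\, l\otimes l']=S^{m-n,m-n}_\varphi[f\, l'\otimes l]$ for all $l,l'\in\Lambda^n L$, which is exactly the $(-1)^{m-n}I$-invariance required by the definition of symmetric. This follows by taking real parts together with the parity identity $\varphi(l)\wedge\overline{\varphi(l')}=(-1)^{n^2}\overline{\varphi(l')}\wedge\varphi(l)=(-1)^{n}\overline{\varphi(l')}\wedge\varphi(l)$, exactly as in the computation for $\Omega_j\wedge\overline{\Omega}_k$ in Lemma \ref{lm.positive_ronkin}; the auxiliary $(-1)^n$ cancels against the $(-1)^n$ coming from the prefactor in the definition of $S^{m-n,m-n}_\varphi$.

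Positivity is the easiest step. By the definition of positivity it suffices to evaluate $S^{m-n,m-n}_\varphi\bigl[(-1)^{\frac{n(n-1)}{2}}\beta\wedge I(\beta)\bigr]$ on a pure tensor $\beta=g(l^*)\,l\otimes 1$ with $l\in\Lambda^n L$; unwinding the definition produces a nonnegative real multiple of
\[
\RRe\int_{X\setminus V}\frac{(-1)^{\frac{n(n-1)}{2}}}{(2\pi i)^n}\,\lvert\Log_\varphi^*g\rvert^{2}\,\varphi(l)\wedge\overline{\varphi(l)},
\]
and the standard fact that $i^{n^2}(-1)^{\frac{n(n-1)}{2}}\Phi\wedge\overline\Phi$ is a nonnegative $(n,n)$-form for any holomorphic $(n,0)$-form $\Phi$ finishes the estimate. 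The main obstacle is bookkeeping the signs when translating between the tropical operators $d',d'',I$ and the complex operators $\partial,\overline\partial$ and complex conjugation, but every such computation is already carried out in Lemma \ref{lm.positive_ronkin}, so the coordinate-free version only repackages these identities.
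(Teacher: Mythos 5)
Your proposal takes exactly the paper's route: the paper's entire proof of this lemma is the single sentence that it repeats the proof of Lemma \ref{lm.positive_ronkin}, and both your observation that the basis isomorphism $\Phi$ identifies $S^{m-n,m-n}_\varphi$ with the current $S$ of that lemma and your coordinate-free re-run of the four verifications are faithful to that intent. The closedness and symmetry sketches are sound (your sign bookkeeping for $(-1)^{m-p}I$ and for $\varphi(l)\wedge\overline{\varphi(l')}=(-1)^{n}\overline{\varphi(l')}\wedge\varphi(l)$ matches the computations in Lemma \ref{lm.positive_ronkin}).

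The one step where your sketch is too quick is positivity: it does \emph{not} suffice to test on pure tensors $\beta=g(l^*)\,l\otimes 1$. A general element of $\mathcal{E}^{n,0}_c(L^*)$ is a sum $\beta=\sum_J g_J\,l_J\otimes 1$, and nonnegativity of a quadratic expression on a spanning set of pure tensors does not imply nonnegativity on sums. The correct argument --- the one actually used in Lemma \ref{lm.positive_ronkin} --- packages the whole sum into a single holomorphic $(n,0)$-form $\Phi=\sum_J\Log^*_\varphi(g_J)\,\varphi(l_J)$ on $X\setminus V$ and observes that
$$S^{m-n,m-n}_\varphi\bigl[(-1)^{\frac{n(n-1)}{2}}\beta\wedge I(\beta)\bigr]=\frac{(-1)^{n+\frac{n(n-1)}{2}}}{2^n}\,\RRe\int_{X\setminus V}\frac{1}{(2\pi i)^n}\,\Phi\wedge\overline{\Phi}\;\geq\;0,$$
since $(-1)^{\frac{n(n-1)}{2}+n}\frac{1}{(2\pi i)^n}\Phi\wedge\overline{\Phi}$ is a nonnegative top-degree form. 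With that replacement your argument is complete and coincides with the paper's.
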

The proof of this lemma repeats the proof of Lemma \ref{lm.positive_ronkin}.


\begin{remark}
It seems that the current $S^{m-n,m-n}_\varphi$ may be interesting for a general $m,$ not only when $m=n+1.$
In particular, somehow similar construction was used in \cite{BT} to prove the homological convexity of complements 
of amoebas of algebraic sets of higher codimensions. In complex geometry an complex submanifold $Z$ of dimension $n$ of an $m$-dimensional complex manifold $X$ defines 
the positive integration current $[Z]$ of bidegree $(m-n,m-n).$ We think that the supercurrent  $S^{m-n,m-n}_\varphi$ plays the role of the integration current of the amoeba $\mathcal{A}_\varphi$, 
which is consider as a "tropical manifold" of dimension $n.$
\end{remark}

\subsection{The Ronkin function, the order map and the Newton polytope.}
We are going to rewrite the results of Section \ref{sect.2} in coordinate free terms. Until the end of this section we assume that $$m=n+1.$$ 

Let us fix a nonzero element $\mu$ of $\Lambda^m L,$ we can consider $\mu$ as a volume form on $L^*$ with a constant coefficient. 
Let us define the map $\rho_\mu:L^1_{loc}(L^*) \rightarrow  \mathcal{D}^{0,0}(L^*)$ as
$$\rho_\mu f[\psi]=\int_{L^*} f \psi,$$ where $f\in L^1_{loc}(L^*),$ $\psi \in \mathcal{E}_c^{m,m}(L^*)$ and the integral is understood as the tropical integral 
(the tropical integral depends on the choice of the volume form $\mu$).  Notice that for the volume form $\mu'=c\mu, c\in \mathbb{R}\setminus\{0\},$ we get $\rho_{\mu'}f = \frac{1}{|c|} \rho_{\mu}f.$

\begin{definition} \label{def.ronkin2}The \emph{Ronkin function} $R_\varphi$ of the amoeba $\mathcal{A}_\varphi$ is a function on $L^*$ 
such that $$d' d'' \rho_\mu R_\varphi = S^{1,1}_\varphi.$$
\end{definition}

\begin{proposition}\label{pr.ronkin_eq}
In coordinate terms Definition \ref{def.ronkin2} is exactly the definition of the Ronkin function from the previous 
section (Definition \ref{def.ronkin1}).
\end{proposition}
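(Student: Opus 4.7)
The plan is to unwind both definitions under a fixed basis of $L$ and check that they produce the same functional on test superforms. Choose a basis $e_1,\dots,e_m$ of $L$, set $\omega_j=\varphi(e_j)$, and use the isomorphism $\Phi:L^*\to\mathbb{R}^m$ to identify coordinates. Under this identification, the coordinate function $x_j=\langle\cdot,e_j\rangle$ on $L^*$ has differential $dx_j$ equal, as a covector, to the element $e_j\in L=(L^*)^*$. Consequently $dx_J = e_J$ for every multi-index $J$, and, since $\varphi$ is extended multiplicatively to exterior powers, $\varphi(e_{[j]})=\omega_1\wedge\cdots\wedge\widehat{\omega_j}\wedge\cdots\wedge\omega_m=\Omega_j$.

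Next, fix the volume form $\mu=e_1\wedge\cdots\wedge e_m$, which under the identification above corresponds to $dx=dx_1\wedge\cdots\wedge dx_m$. Then $\rho_\mu R_\varphi$ is precisely the $(0,0)$-supercurrent associated to the locally integrable function $R_\varphi$ via the same convention implicitly used in Definition \ref{def.ronkin1} to make sense of $d'd'' R_\omega$ as a $(1,1)$-supercurrent acting on $(n,n)$-forms with compact support (see the discussion preceding Proposition \ref{pr.ronkin_eq2}). Writing any test form $\psi\in \mathcal{E}^{n,n}_c(L^*)$ in the basis,
$$\psi=\sum_{j,k=1}^m f_{jk}(x)\,dx[j]\otimes dx[k]=\sum_{j,k=1}^m f_{jk}(x)\,e_{[j]}\otimes e_{[k]},$$
and using the multilinear extension of $S^{1,1}_\varphi$ (so that the formula in the definition of $S^{m-n,m-n}_\varphi$ is read as bilinear in the two factors of $\Lambda^{m-n} L$), one immediately obtains
$$S^{1,1}_\varphi[\psi]=\sum_{j,k=1}^m\frac{(-1)^n}{2^n}\RRe\int_{X\setminus V}\frac{1}{(2\pi i)^n}\Log^*_\omega(f_{jk})\,\Omega_j\wedge\overline{\Omega}_k,$$
which is exactly the right-hand side of (\ref{eq.Rc}).

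Finally, the equation $d'd''\rho_\mu R_\varphi=S^{1,1}_\varphi$ of Definition \ref{def.ronkin2}, evaluated on $\psi$ above, becomes, after transferring the two differentials $d',d''$ onto $\psi$ via the current-form pairing $(d'T)[\psi]=(-1)^{p+q+1}T[d'\psi]$, precisely the identity $(d'd'' R_\omega)[\psi]=S^{1,1}_\varphi[\psi]$ used to define $R_\omega$ in Definition \ref{def.ronkin1}. Hence $R_\varphi$ satisfies exactly the defining condition of $R_\omega$ associated to $\omega=(\omega_1,\dots,\omega_m)$, and the two definitions coincide (up to addition of an affine function, as is inherent to both). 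The only real obstacle is bookkeeping: one must track the sign $(-1)^{m(m-1)/2}$ hidden in the tropical integral $\int_{L^*}f\mu\otimes\mu$, together with the signs from the current-form duality and the permutations that reorder $dx[j]\wedge dx_j$ to $dx$, and verify they all cancel. Once the basis and volume form are fixed, this is a mechanical verification.
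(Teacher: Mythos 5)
Your proposal is correct and follows essentially the same route as the paper: fix a basis $e_1,\dots,e_m$ of $L$, identify $L^*$ with $\mathbb{R}^m$ via $\Phi$, observe that $\varphi(e_{[j]})=\Omega_j$ and $\mu=\Phi^*dx$, and check that $S^{1,1}_\varphi$ evaluated on a basis expansion of a test superform reproduces the right-hand side of (\ref{eq.Rc}), so the two defining equations $d'd''(\cdot)=S$ coincide. The paper phrases this through the induced maps $\Phi_*$, $\Phi^*$ and the duality $(\Phi_*\psi)[\xi]=\psi[\Phi^*\xi]$, but the content is identical to your verification.
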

\begin{proof}
Let $e_1,\dots,e_m$ be a basis of $L.$ Then there is an isomorphism
$$\Phi: L^* \rightarrow \mathbb{R}^m,$$
$$\Phi(l^*)= (\langle l^*,e_1\rangle,\dots.\langle l^*,e_m\rangle).$$

The isomorphism $\Phi$ induces the isomorphisms: $$\Phi_*: \mathcal{D}^{p,q}(L^*) \rightarrow \mathcal{D}^{p,q}(\mathbb{R}^m),\Phi^*: \mathcal{E}^{p,q}(\mathbb{R}^m) \longrightarrow \mathcal{E}^{p,q}(L^*).$$ 
The following relation holds $(\Phi_* \psi)[ \xi]=\psi[ \Phi^* \xi],$ where $\psi\in \mathcal{D}^{m-p,m-q}(L^*)$
 and $\xi\in  \mathcal{E}_c^{p,q}(\mathbb{R}^m).$

Consider $\omega=(\omega_1,\dots,\omega_m)$, where $\omega_j=\varphi(e_j).$ 
Then $ \Phi\circ\mathrm{Log}_{\varphi}=\mathrm{Log}_{\omega}$ and $\Phi\mathcal{A}_\varphi=\mathcal{A}_\omega\subset \mathbb{R}^m.$
A superform $\psi\in\mathcal{E}_c^{n,n}(\mathbb{R}^m)$ can be written as 
$$\psi=\sum^m_{j,k=1} f_{jk}(x) d x[j]\otimes d x[k],$$
where $$ d x[j] = d x_1\wedge \dots \wedge d x_{j-1}\wedge d x_{j+1} \wedge \dots \wedge d x_m.$$
Since $\varphi \Phi^* d x[j] = \Omega_j,$ where 
$$\Omega_j =\omega_1\wedge \dots \wedge \omega_{j-1}\wedge \omega_{j+1} \wedge \dots \wedge \omega_m,$$
we get $$(\Phi_* S^{1,1}_\varphi)[\psi]=S^{1,1}_\varphi[\Phi^* \psi]=\sum^m_{j,k=1} 
\frac {(-1)^{n}}{2^n} \RRe \int_{X} \frac{1}{(2 \pi i)^n} 
\Log^*_\omega (f_{jk}) \Omega_{j}\wedge\overline{\Omega}_{k}.$$
Therefore by Definition \ref{def.ronkin1} the Ronkin function $R_\omega$ of the amoeba $\mathcal{A}_\omega$ is exactly equal to
$$d' d'' R_\omega = (\Phi_* S^{1,1}_\varphi),$$
thus we get $$d' d'' \rho_\mu \Phi^* R_\omega = S^{1,1}_\varphi,$$
where $\mu=\Phi^* d x.$

\end{proof}

Therefore we can rewrite Theorem \ref{th.ronkin} as follows
\begin{theorem}
  The following statements hold: 
  \begin{enumerate}
  \item The Ronkin function $R_{\varphi}$ exists and it is unique up to addition of an affine function, it is a continuous convex function on $L^*.$
  \item Suppose that the amoeba $\mathcal{A}_\varphi$ satisfies the nondegeneracy condition. 
  Then for any connected open set $C\subset L^*$ the restriction of $R_\varphi$ to $C$ is affine if and only if $C$ 
  does not intersect the amoeba $\mathcal{A}_\varphi$. 
  \end{enumerate}
\end{theorem}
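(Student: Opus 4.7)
The plan is to reduce the statement to the coordinate version (Theorem \ref{th.ronkin}) via the identification made explicit in Proposition \ref{pr.ronkin_eq}. First I would fix an arbitrary basis $e_1,\dots,e_m$ of $L$, giving the linear isomorphism $\Phi : L^* \to \mathbb{R}^m$ and the volume form $\mu = \Phi^* dx$. Setting $\omega_j = \varphi(e_j)$ produces a vector $\omega = (\omega_1,\dots,\omega_m)$ of imaginary normalized holomorphic differentials with $\mathcal{A}_\varphi = \Phi^{-1}(\mathcal{A}_\omega)$. By the computation in Proposition \ref{pr.ronkin_eq}, the equation $d' d'' \rho_\mu R_\varphi = S^{1,1}_\varphi$ of Definition \ref{def.ronkin2} is equivalent, after pushforward by $\Phi$, to the defining current equation of the Ronkin function $R_\omega$ from Definition \ref{def.ronkin1}. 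Moreover, the coordinate-free nondegeneracy condition is equivalent to the coordinate one: $\varphi : \Lambda^n L \to \Lambda^n H$ is nonzero iff some $\Omega_J = \varphi(e_{j_1}\wedge\cdots\wedge e_{j_n})$ is nonzero on $X\setminus V$, since these wedges span $\Lambda^n L$.

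Next I would invoke Theorem \ref{th.ronkin} applied to $\omega$. Part $(1)$ supplies a continuous convex function $R_\omega$ on $\mathbb{R}^m$, unique up to an affine function, satisfying the required current equation. Transporting by $\Phi$ yields $R_\varphi := \Phi^* R_\omega$, which is continuous and convex on $L^*$ since $\Phi$ is a linear homeomorphism, and which satisfies the current equation of Definition \ref{def.ronkin2}. Uniqueness modulo an affine function follows because the kernel of $f \mapsto d' d'' \rho_\mu f$ on $L^1_{loc}(L^*)$ is exactly the space of affine functions; this is intrinsic to $L^*$ and, aside from an overall positive constant, is independent of the choice of $\mu$. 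For part $(2)$, I would apply Theorem \ref{th.ronkin}$(2)$ to $R_\omega$ on the open connected set $\Phi(C) \subset \mathbb{R}^m$: the restriction $R_\omega|_{\Phi(C)}$ is affine iff $\Phi(C)$ avoids $\mathcal{A}_\omega$, which by $\Phi$-equivariance means exactly that $R_\varphi|_C$ is affine iff $C$ avoids $\mathcal{A}_\varphi$.

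The only real subtlety, and the main point to watch carefully, is to verify that every property in sight (continuity, convexity, affineness, openness, connectedness, membership in the amoeba, and the normalization class) is genuinely intrinsic and does not depend on the auxiliary basis of $L$ or on the scaling of $\mu$. A change of basis $e_j \mapsto \sum_k A_{jk} e_k$ replaces $R_\omega$ by its composition with the corresponding linear change of coordinates on $\mathbb{R}^m$ and scales $\mu$ by $|\det A|$; each of these operations preserves the entire list of properties, so the conclusion genuinely lives on $L^*$. This is the one step where care is needed, but it is essentially what Proposition \ref{pr.ronkin_eq} was formulated to achieve, and once it is spelled out the theorem follows.
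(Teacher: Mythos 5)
Your proposal is correct and is essentially the paper's own argument: the paper states this theorem as a direct rewriting of Theorem \ref{th.ronkin} in coordinate-free terms, with the identification of the two definitions of the Ronkin function supplied by Proposition \ref{pr.ronkin_eq}. Your additional care about basis-independence and the equivalence of the two nondegeneracy conditions is sound and only makes explicit what the paper leaves implicit.
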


We can consider $\mathcal{D}^{0,0}(L^*)\otimes L$ as a $(0,0)$-currents with coefficients in $L,$ hence for any $T\in \mathcal{D}^{0,0}(L^*)\otimes L$ and any $ \psi\in \mathcal{E}_c^{m,m}(L^*),$
$T[\psi]$ is an element of $L.$ 

Let $e_1,\dots,e_m$ be a basis of $L$.
We consider $e_j\in L$ as a differential $1$-form on $L^*$ with constant coefficients. 
Let us denote $$e=e_1\wedge\dots \wedge e_m$$ and $$e_{[j]}=e_1\wedge\dots\wedge e_{j-1} \wedge e_{j+1}\wedge \dots \wedge e_m.$$
Any element $\psi\in \mathcal{E}_c^{m,m}(L^*)$ can be written as 
$\psi= f(l^*) e \otimes e$, where  $f(l^*)\in C_c^\infty(L^*).$
Let us define the isomorphism $$\pi_1:\mathcal{D}^{1,0}(L^*) \rightarrow \mathcal{D}^{0,0}(L^*)\otimes L$$
as
$$\pi_1(\varphi)[f(l^*) e\otimes e ]=\sum^m_{j=1} (-1)^{j+1} \varphi[f(l^*) e_{[j]}\otimes e ] \otimes e_j.$$
One can check that this definition does not depend on the choice of the basis.

Let $C$ be a connected component of $L^*\setminus \mathcal{A}_\varphi$ and let $\psi_C$ be an $(m,m)$-superfrom,
$\psi_C\in \mathcal{E}_c^{m,m}(L^*),$ such that $\supp  \psi_C \subset C$ and $\int_{L^*} \psi_C =1$ (we need a volume form $\mu\in \Lambda^m L$ to define this tropical integral).
We denote the set of connected components of $L^*\setminus \mathcal{A}_\varphi$ by $\Upsilon.$ 
Then we define the order map $\nu^\mu_\varphi:$
\begin{definition}
The \emph{order map} $$\nu^\mu_\varphi: \Upsilon \rightarrow L$$
is defined as
$$\nu^\mu_\varphi(C) = (\pi_1 d'\rho_\mu R_\varphi)[\psi_C].$$
\end{definition}
\begin{lemma}\label{lm.welldef}
 The order map $\nu^\mu_\varphi$ is well-defined.
\end{lemma}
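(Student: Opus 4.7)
The plan is to show that the expression $(\pi_1 d'\rho_\mu R_\varphi)[\psi_C]$ does not depend on the particular test form $\psi_C$ chosen (among those with compact support in $C$ and total integral $1$), so that $\nu^\mu_\varphi(C)$ really depends only on the component $C$. The key input is Theorem \ref{th.ronkin}: since $C\subset L^*\setminus\mathcal{A}_\varphi$ is connected, the restriction $R_\varphi|_C$ is affine. In particular $R_\varphi$ is smooth on $C$, and its gradient (viewed as an element of $L\cong (L^*)^*$) is a constant $\alpha_C \in L$ that depends only on $C$.

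To carry this out concretely, I would fix a basis $e_1,\ldots,e_m$ of $L$ with $\mu=e_1\wedge\cdots\wedge e_m$ (rescaling $\mu$ at worst scales $\nu^\mu_\varphi$, which is harmless for well-definedness), and let $x_1,\ldots,x_m$ be the dual coordinates on $L^*$. Theorem \ref{th.ronkin} gives constants $\alpha_1,\ldots,\alpha_m,c\in\mathbb{R}$ with $R_\varphi(\xi)=\sum_j\alpha_j x_j(\xi)+c$ for $\xi\in C$, where $\alpha_C=\sum_j\alpha_j e_j$. Writing any admissible test form as $\psi_C=f(x)\,e\otimes e$ with $f\in C^\infty_c(C)$ and $\int f\,dx=1$, I then unwind the definitions of $\pi_1$ and of $d'$ on currents:
\[
\nu^\mu_\varphi(C)=\sum_{j=1}^m(-1)^{j+1}(d'\rho_\mu R_\varphi)[f\,e_{[j]}\otimes e]\cdot e_j,
\]
\[
(d'\rho_\mu R_\varphi)[f\,e_{[j]}\otimes e]=-\rho_\mu R_\varphi[d'(f\,e_{[j]}\otimes e)]=-(-1)^{j-1}\rho_\mu R_\varphi[\partial_j f\cdot e\otimes e].
\]

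Now the crucial step: because $\supp f\subset C$ and $R_\varphi$ is smooth on $C$, the distributional pairing reduces to an ordinary integral, and classical integration by parts applies,
\[
\rho_\mu R_\varphi[\partial_j f\cdot e\otimes e]=(-1)^{\frac{m(m-1)}{2}}\int_C R_\varphi\,\partial_j f\,dx=-(-1)^{\frac{m(m-1)}{2}}\int_C\alpha_j\,f\,dx=-(-1)^{\frac{m(m-1)}{2}}\alpha_j,
\]
where the normalisation $\int f\,dx=1$ is used in the last equality. Collecting the signs, $\nu^\mu_\varphi(C)$ is a universal scalar multiple (depending only on $m$) of $\alpha_C=\sum_j\alpha_j e_j$. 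Since $\alpha_C$ depends only on the affine function $R_\varphi|_C$, and hence only on $C$, the value $\nu^\mu_\varphi(C)$ is independent of the chosen $\psi_C$.

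I expect the main obstacle to be only the sign and orientation bookkeeping (the tropical integral convention $(-1)^{m(m-1)/2}$, the signs from $e_k\wedge e_{[j]}=(-1)^{j-1}\delta_{jk}\,e$, and the sign in the currential definition of $d'$), since the conceptual content is entirely the observation that $R_\varphi$ is affine on $C$ and so integration by parts reduces the computation to evaluating the constant gradient against the normalised density $f$. One should also note, as the authors remark just before the lemma, that the definition of $\pi_1$ is basis-independent; together with the computation above this confirms that $\nu^\mu_\varphi(C)\in L$ is intrinsically defined.
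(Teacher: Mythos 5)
Your argument is correct, but it takes a different route from the paper's. The paper's proof never uses that $R_\varphi$ is affine on $C$: it observes that the difference $\psi^1_C-\psi^2_C$ of two normalized test forms is $d''$-exact with a primitive supported in $C$, integrates by parts to move $d''$ onto $d'\rho_\mu R_\varphi$, and concludes that the resulting pairing vanishes because $\supp d'd''\rho_\mu R_\varphi\subset\mathcal{A}_\varphi$ is disjoint from $\supp\psi$. You instead evaluate $(\pi_1 d'\rho_\mu R_\varphi)[\psi_C]$ explicitly, using the affineness of $R_\varphi|_C$ from Theorem \ref{th.ronkin} and classical integration by parts, and find that it equals a universal constant times the constant gradient $\alpha_C$ of $R_\varphi|_C$. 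Both hinge on essentially the same fact (the support of the current $S^{1,1}_\varphi$ lies in the amoeba, which is also what makes $R_\varphi|_C$ affine), but your computation buys more: it identifies $\nu^\mu_\varphi(C)$ with $\nabla R_\varphi|_C$ up to a universal sign, which is exactly the coordinate identification the paper later invokes without detail in the proof of Theorem \ref{th.recc2}; the paper's argument is more economical and purely ``cohomological,'' in that it would apply to any $d''$-closed current supported in $\mathcal{A}_\varphi$ without computing the value. Your sign bookkeeping checks out (note only that the paper's normalization $\int_{L^*}\psi_C=1$ is a tropical integral, so with $\mu=dx$ it reads $\int f\,dx=(-1)^{m(m-1)/2}$, which conveniently cancels the universal sign and gives $\nu^\mu_\varphi(C)=\alpha_C$ on the nose).
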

\begin{proof}
We should check that $\nu^\mu_\varphi(C)$ does not depend on the choice of the form $\psi_C.$
Suppose that $\psi^1_C,\psi^2_C\in \mathcal{E}_c^{m,m}(L^*)$ are two $(m,m)$-superforms such 
that $\supp  \psi^j_C \subset C$ and $\int_{L^*} \psi^j_C =1$ for $j=1,2.$ Then $\psi^1 - \psi^2$ is $d''-$exact and there is a form $\psi\in \mathcal{E}_c^{0,m-1}(L^*)$ such that $\supp  \psi \subset C$ and
$$e\wedge d'' \psi = \psi^1 - \psi^2.$$ 
\begin{multline*}
(\pi_1 d'\rho_\mu R_\varphi)[\psi^1_C]- (\pi_2 d'\rho_\mu R_\varphi)[\psi^1_C]= (\pi_1 d'\rho_\mu R_\varphi)[e\wedge d'' \psi]= \sum^m_{j=1} (-1)^{j+1} d'\rho_\mu R_\varphi[e_{[j]}\wedge d''  \psi ] \otimes e_j=\\
=(-1)^{m} \sum^m_{j=1} (-1)^{j} d'' d'  \rho_\mu R_\varphi[e_{[j]}\wedge  \psi ] \otimes e_j =0 
\end{multline*}
The last equality holds because $\supp d' d'' \rho_\mu R_\varphi \subset \mathcal{A}_\varphi$ does not intersect $\supp  \psi.$
\end{proof}

\begin{definition} The \emph{Newton polytope}  $N^\mu_\varphi\subset L$ of the amoeba $\mathcal{A}_\varphi $  
is the convex hull of the image of $\nu^\mu_\varphi.$ 
\end{definition}
We define the \emph{normal cone} to $N^\mu_\varphi$ at a point $p\in L$ to be equal to 
$$\mathrm{norm}_{N^\mu_\varphi}(p)=\{l^*\in L^*: \forall l \in N^\mu_\varphi: \langle l-p, l^*\rangle \leq 0\}.$$

We can rewrite Theorem \ref{th.rec} in a new terms as follows 
\begin{theorem}\label{th.recc2}
The order map $\nu^\mu_\varphi$ is injective.
Let $C$ be a connected component of $L^*\setminus \mathcal{A}_\varphi.$ Then the recession cone of $C$ is equal to the normal cone to the Newton 
polytope $N^\mu_\varphi$ at the point $\nu^\mu_\varphi(C).$ 
\end{theorem}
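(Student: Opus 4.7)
The plan is to reduce Theorem \ref{th.recc2} to the coordinate version established in Theorem \ref{th.rec}. The main substantive work is to verify that the coordinate-free order map $\nu^\mu_\varphi$ coincides, under an appropriate choice of basis, with the classical order map $\nu_\omega$, after which the statement transfers directly.

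First I would choose a basis $e_1,\ldots,e_m$ of $L$ with $\mu = e_1 \wedge \cdots \wedge e_m$, and set $\omega_j = \varphi(e_j)$. The induced isomorphism $\Phi: L^* \to \mathbb{R}^m$, $\Phi(l^*) = (\langle l^*, e_1\rangle,\ldots,\langle l^*, e_m\rangle)$, satisfies $\Log_\omega = \Phi \circ \Log_\varphi$. Hence $\Phi$ identifies $\mathcal{A}_\varphi$ with $\mathcal{A}_\omega$ and sets up a bijection between connected components of $L^* \setminus \mathcal{A}_\varphi$ and those of $\mathbb{R}^m \setminus \mathcal{A}_\omega$. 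By Proposition \ref{pr.ronkin_eq}, we may take $R_\varphi = \Phi^* R_\omega$ (Ronkin functions agreeing up to an affine function in either case).

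Next I would unwind the definitions of $d'$ and $\pi_1$ to evaluate $\nu^\mu_\varphi(C) = (\pi_1 d' \rho_\mu R_\varphi)[\psi_C]$ in these coordinates. Writing $\psi_C = f(l^*) e \otimes e$ with $f$ supported in $C$ and $\int_{L^*} \psi_C = 1$, and using integration by parts (valid because $R_\varphi$ is smooth and affine on $C$ by Theorem \ref{th.ronkin}), the computation collapses to
$$\nu^\mu_\varphi(C) = \sum_{j=1}^m \frac{\partial R_\omega}{\partial x_j}(x)\, e_j$$
for any $x \in \Phi(C)$. Under the identification $L \cong \mathbb{R}^m$ via $\sum a_j e_j \mapsto (a_j)$, this shows $\nu^\mu_\varphi$ is conjugate to $\nu_\omega$, and consequently the Newton polytope $N^\mu_\varphi \subset L$ is identified with $N_\omega \subset \mathbb{R}^m$.

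Finally, injectivity of $\nu^\mu_\varphi$ follows from injectivity of $\nu_\omega$ in Theorem \ref{th.rec}. For the recession cone statement: $\Phi$ is linear, so it identifies recession cones of components; the dual identification sends the normal cone to $N^\mu_\varphi$ at $\nu^\mu_\varphi(C)$ to the normal cone to $N_\omega$ at $\nu_\omega(\Phi(C))$. Applying Theorem \ref{th.rec} then finishes the proof. The main obstacle is purely a bookkeeping challenge in the coordinate computation: several sign conventions conspire at once --- the factor $(-1)^{m(m-1)/2}$ from the tropical integral of $\mu \otimes \mu$, the convention $(d'\varphi)[\psi] = (-1)^{p+q+1}\varphi[d'\psi]$, the alternating sign $(-1)^{j+1}$ in the definition of $\pi_1$, and the wedge identity $e_j \wedge e_{[j]} = (-1)^{j-1} e$. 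Verifying that these signs cancel to produce $\nabla R_\omega$ without spurious factors is tedious but routine; once the identification is in place, no further geometric argument is needed.
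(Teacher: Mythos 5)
Your proposal is correct and follows essentially the same route as the paper, which disposes of Theorem \ref{th.recc2} by reducing to Theorem \ref{th.rec} via the coordinate expression of $(\pi_1 d'\rho_\mu R_\varphi)[\psi_C]$; you simply carry out the sign bookkeeping (which does cancel, provided the normalization $\int_{L^*}\psi_C=1$ is read as the tropical integral) that the paper leaves implicit. No gap.
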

\begin{proof}
It follows directly from the expression of $(\pi_1 d'\rho_\mu R_\varphi)[\psi_C]$ in terms of coordinates.
\end{proof}

Remark \ref{rm.MAmes} can be rewritten in the coordinate-free case. 
Since $R_\varphi$ is convex, one can show that there is a well-defined supercurrent $\frac{1}{m!}(d' d'' \rho_\mu R_\varphi)^m\in \mathcal{D}^{m,m}(L^*).$  
Let $M R_\varphi$ be a Monge-Amp\`{e}re measure of $R_\varphi,$  then we have 
$$M R_\varphi(E) = \langle  \frac{1}{m!}(d' d'' \rho_\mu R_\varphi)^m , 1_E \rangle.$$
Then  Proposition \ref{pr.totalmass} can be written in a coordinate-free form as follows 
$$M R_\varphi(L^*)=\mathrm{Vol} N^\mu_\varphi.$$

\end{document}